\Crefname{ALC@unique}{Line}{Lines}
\newcommand{\cO}{\mathcal{O}}
\newcommand{\bfy}{{\bf y}}
\newcommand{\bfz}{{\bf z}}
\newcommand{\bfv}{{\bf v}}
\newcommand{\bfg}{{\bf g}}
\newcommand{\bfe}{{\bf e}}
\newcommand{\supp}{\mathrm{supp}}
\DeclareMathOperator*{\argmin}{\mathrm{argmin}}
\DeclareMathOperator*{\minimize}{\mathrm{minimize}}
\newcommand{\proj}{\mathrm{P}}
\newcommand\numberthis{\addtocounter{equation}{1}\tag{\theequation}}
\newcommand{\tnabla}{\tilde{\nabla}}
\title{Zeroth-Order Regularized Optimization (ZORO): 
Approximately Sparse Gradients and Adaptive Sampling\thanks{This paper has been accepted to SIAM Journal on Optimization (SIOPT) and will be published electronically soon.
}
}
\author{HanQin Cai\thanks{Department of Mathematics, University of California, Los Angeles, Los Angeles, CA, USA (\email{hqcai@math.ucla.edu}).}
\and Daniel Mckenzie\thanks{Department of Mathematics, University of California, Los Angeles, Los Angeles, CA, USA (\email{mckenzie@math.ucla.edu}).} \and Wotao Yin\thanks{Damo Academy, Alibaba US,
Bellevue, WA, USA (\email{wotao.yin@alibaba-inc.com}).}
\and Zhenliang Zhang\thanks{Xmotors AI, Mountain View, CA, USA (\email{zhenliang.zhang@gmail.com}).}}
\begin{document}

\maketitle

\begin{abstract}
    We consider the problem of minimizing a high-dimensional objective function, which may include a regularization term, using only noisy evaluations of the function. Such optimization is also called derivative-free, zeroth-order,  or black-box optimization. We propose a new \textbf{Z}eroth-\textbf{O}rder \textbf{R}egularized \textbf{O}ptimization method, dubbed ZORO. 
    When the underlying gradient is approximately sparse at an iterate, ZORO needs very few objective function evaluations to obtain a new iterate that decreases the objective function. We achieve this with an adaptive, randomized gradient estimator, followed by an inexact proximal-gradient scheme.  
    Under a novel approximately sparse gradient assumption and various different convex settings, we show the (theoretical and empirical) convergence rate of ZORO is only logarithmically dependent on the problem dimension. Numerical experiments show ZORO outperforms existing methods on both synthetic and real datasets.   
\end{abstract}

\begin{keywords}
zeroth-order optimization, black-box optimization, derivative-free optimization, compressible gradients, sparse gradients, sparse adversarial attack
\end{keywords}
\begin{AMS}
90C56, 	65K05 , 68T05, 68Q25
\end{AMS}

\section{Introduction}
\label{sec:Introduction}
Zeroth-order optimization, also known as derivative-free or black-box optimization, appears in a wide range of applications where either the objective function is implicit or its gradient is impossible or too expensive to compute. These applications include structured prediction \citep{taskar2005learning}, 
reinforcement learning \citep{choromanski2018structured}, 
bandit optimization \citep{flaxman2004online,shamir2013complexity}
optimal setting search in material science experiments \citep{Nakamura2017}, 
adversarial  attacks on neural networks \citep{kurakin2016adversarial,papernot2017practical}, 
and hyper-parameter tuning \citep{snoek2012practical}. In this work, we propose a new method, which we coin ZORO, for high dimensional {\em regularized} zeroth-order optimization problems:
\begin{equation} \label{eq:reg_opt_problem}
    \minimize_{x\in\mathbb{R}^d} F(x):=f(x)+r(x),
\end{equation}
where $r$ is an \emph{explicit} convex extended real-valued function ({\em i.e.} $r: \mathbb{R}^d\to \mathbb{R}\cup\{+\infty\}$) and $f$ is accessible only via a noisy zeroth-order oracle:
\begin{equation}
E_f(x)=f(x) +\xi,  
\label{eq:OracleDefinition}
\end{equation}
where $\xi$ is the unknown oracle noise. When we call the oracle with an input $x$, it returns $E_f(x)$ in which $\xi$ changes every time. Employing a regularizer allows us to use prior knowledge about the problem structure explicitly, without expending additional queries. For example, regularizers can be used to enforce or encourage non-negativity ($x\ge 0$), box constraints ($\ell \le x \le u$), or solution sparsity ($\|x\|_1\leq s$). Allowing $r$ to be extended real-valued means any constrained problem: $\minimize_{x\in\mathcal{X}} f(x)$ with $\mathcal{X}$ convex can be reduced to \eqref{eq:reg_opt_problem}; just take $r$ to be the indicator function $\delta_{\mathcal{X}}$ defined as:
\begin{equation*}
    \delta_{\mathcal{X}}(x) = \left\{\begin{array}{lc} 0, &  x \in \mathcal{X} \\ +\infty, &  x \notin \mathcal{X}\end{array} \right..
\end{equation*}

\paragraph{Gradient compressibility} Since queries are typically assumed to be expensive, the appropriate metric for comparing zeroth-order methods is the number of oracle queries needed to achieve a target accuracy. In order to find an $\varepsilon$-optimal solution, a generic zeroth-order algorithm requires at least $\Omega(d/\varepsilon^{2})$ queries \citep{jamieson2012query}. When $d$ is large, this cost can be prohibitive. ZORO reduces the dependence on $d$ from linear to logarithmic by exploiting {\em gradient compressibility}, by which we mean that the sorted components of $\nabla f(x)$ decay like $1/i^q$ for some exponent $q$. Gradient compressibility is largely unexplored in the zeroth-order optimization community, although we note the works \citep{Wang2018,Balasubramanian2018} which exploit the more restrictive {\em gradient sparsity} assumption (see Assumption~\ref{Assumption:ExactSparsity}). However, in many common applications of zeroth-order optimization (for example, hyperparameter tuning \citep{bergstra2012random} and simulation-based optimization \citep{knight2007association}), it has been empirically observed that $f(x)$ is sensitive to only a few variables at a time. These variables thus carry significantly larger weights in $\nabla f(x)$, making it compressible. Our own experiments (see Figure~\ref{fig:compressible_gradient}) reinforce the idea that gradient compressibility is surprisingly ubiquitous in real-world problems. We emphasize that by assuming gradient compressibility instead of gradient sparsity, we are allowing for completely dense gradients (see Figure~\ref{fig:compressible_gradient}). Moreover, the subset of indices corresponding to the largest entries in $\nabla f(x)$ can change for different $x$. \\

\begin{figure}[t]
\centering
\hfill
\subfloat[Asset management]{
\includegraphics[scale = 0.4]{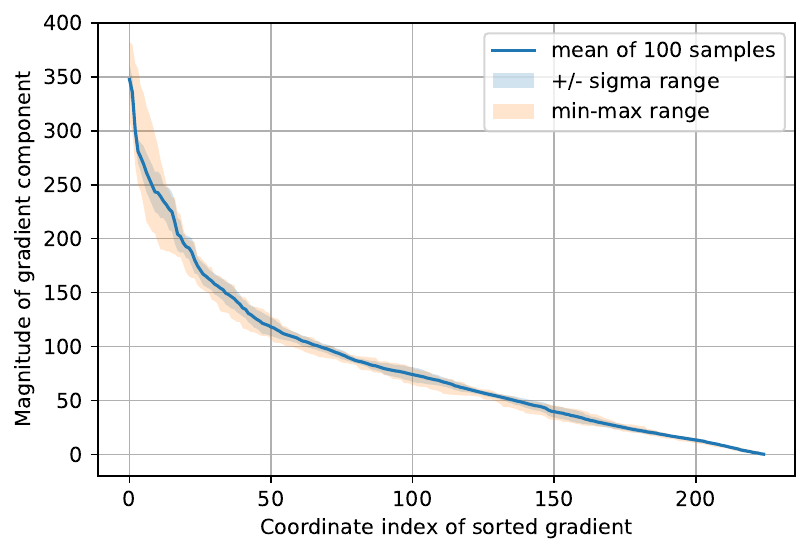}} \hfill
\subfloat[Imagenet adversarial attack]{
\includegraphics[scale = 0.4]{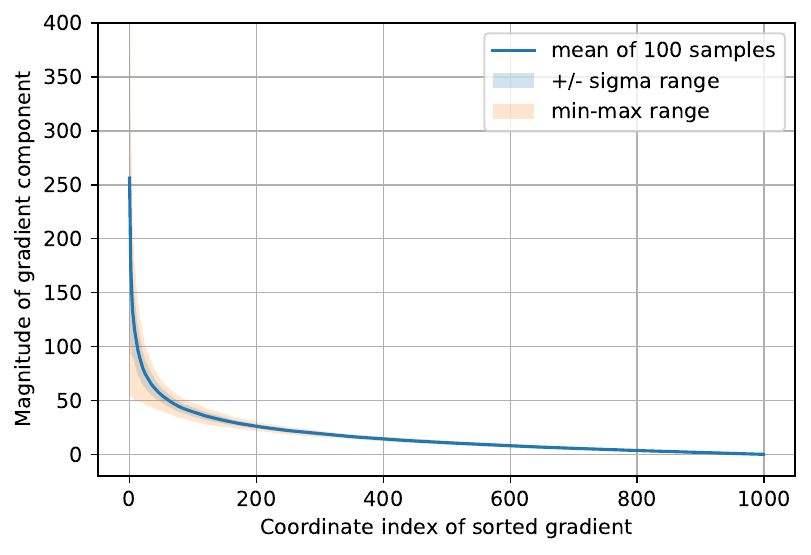}
\label{fig:Imagenet Grad}}
\hfill
\vspace{-0.05in}
\caption{Sorted gradient components at $100$ random points in real-world optimization problems. In Figure~\ref{fig:Imagenet Grad} the gradient is computed within a randomly selected $1000$-dimensional subspace. Such decays indicate the gradients are compressible. } \label{fig:compressible_gradient}
\end{figure}

\begin{figure}[t]
\centering
\hfill
\subfloat[Asset management]{
\includegraphics[scale = 0.4]{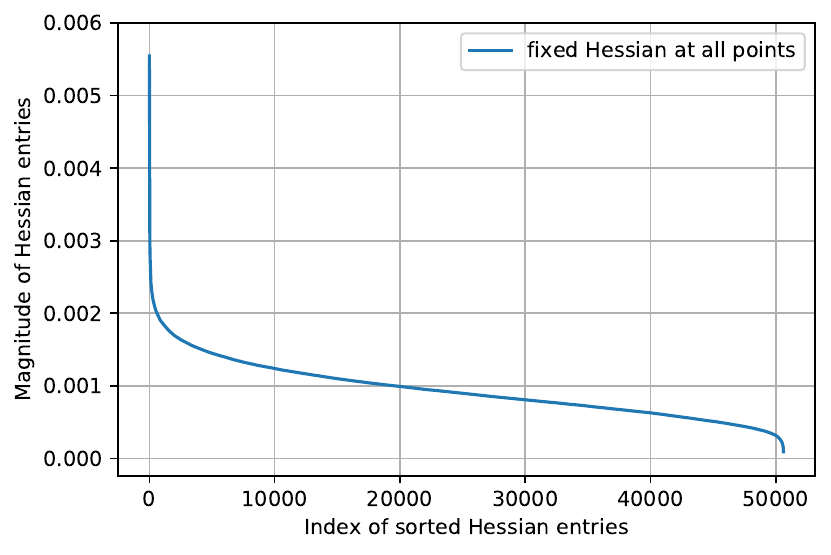}} 
\hfill
\subfloat[Imagenet adversarial attack]{
\includegraphics[scale = 0.4]{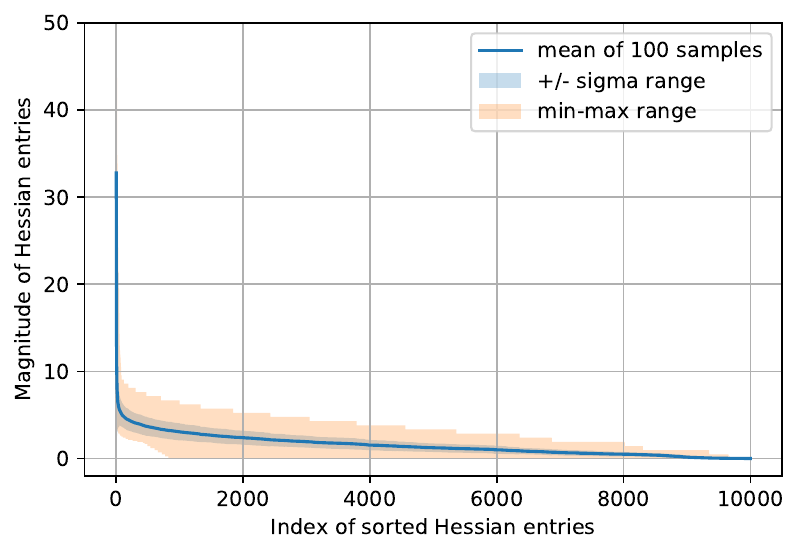} \label{fig:Imagenet Hessian}}
\hfill
\vspace{-0.05in}
\caption{Sorted Hessian entries at $100$ random points in real-world optimization problems. Such decays indicate the Hessians are weakly sparse. In Figure~\ref{fig:Imagenet Hessian} the gradient is computed within a randomly selected $1000$-dimensional subspace. Note that the asset management problem has fixed Hessian at all points, thus there is no variance. } \label{fig:compressible_hessian}
\end{figure}

\paragraph{Inexact prox-gradient descent} ZORO's main iteration is based on the prox-gradient descent method but uses an approximate gradient $\hat{\bfg}_k \approx \nabla f(x_k)$ constructed using randomized finite differences and compressed sensing. Key to our analysis is a careful estimation of the gradient error, $\|\nabla f(x_k) - \hat{\bfg}_k\|_2$, which comes from the following four components: the oracle noise, the finite differencing, the error due to compressed sensing, and the ``tail error'' due to approximating $\nabla f(x_k)$ by its best $s$-sparse approximation. Although there are many prior results in the literature characterizing the iteration complexity of (prox-) gradient descent using an inexact gradient \citep{schmidt2011convergence,friedlander2012hybrid,tappenden2016inexact,berahas2021global}, they do not precisely fit our situation. For example, \citep{schmidt2011convergence} requires $\|\nabla f(x_k) - \hat{\bfg}_k\|_2\leq \varepsilon_k$ with $\sum_{k=1}^{\infty}\varepsilon_k < \infty$, which is not the case for ZORO. Thus we prove new results on convergence of inexact (prox-) gradient descent that may be of independent interest. \\

\paragraph{Lower query complexity} Our improved analysis of prox-gradient descent, together with our gradient error estimates, enable us to prove ZORO exceeds the state-of-the-art in terms of query complexity, at least for functions exhibiting gradient sparsity or compressibility. Specifically, for convex $f$, ZORO finds an $\varepsilon$-optimal solution using only $\cO(s\log(d)/\varepsilon)$ queries. For restricted strongly convex $f$, this complexity improves to $\cO(s\log(d)\log(\varepsilon^{-1}))$. For non-convex $f$, ZORO finds an $\varepsilon$-stationary point using $\cO(s\log(d)/\varepsilon^2)$ queries. We mention two caveats to these results. Firstly, they require $\varepsilon \geq \varepsilon_0$ where $\varepsilon_0$ is a constant controlled primarily by the magnitude of the oracle noise $\xi$. As we do not assume zero-mean oracle noise, such a lower bound on achievable accuracy is unavoidable. Secondly, as ZORO incorporates some stochasticity, these results are probabilistic. However, the probability of failure is so infinitesimally small, it is almost certain to never occur.\\

\paragraph{Adaptive sampling} Empirically, we have observed gradients are less compressible for $x$ closer to the optimal solution. Hence, practical methods need to dynamically choose the number $s$ of large components of $\nabla f(x)$ to target. To this end, we introduce AdaZORO, a version of ZORO employing an {\em adaptive sampling} strategy. In the absence of gradient compressibility AdaZORO reduces to a fixed step-size descent method using the ``linear interpolation'' gradient estimator shown to be effective in \citep{berahas2021theoretical}. So, AdaZORO exploits sparsity when it is present, and incurs no penalty when it is not. \\

\paragraph{Boundedness of iterates} Let $\mathcal{X}^{\star} := \{x^{\star}: f(x^{\star}) = \min_{x \in \mathbb{R}^{d}} f(x)\}$ denote the solution set of \eqref{eq:reg_opt_problem} (taking $r = 0$ for simplicity). A common difficulty in analyzing the convergence of many iterative zeroth-order (and first-order) optimization methods is to show the sequence of distances between the iterates $x_k$ and $\mathcal{X}^{\star}$ remains bounded. Typically, this is either assumed directly \citep{berahas2021global} or shown by assuming  (i) $\mathcal{X}^{\star}$ is a singleton ({\em i.e.} $\mathcal{X}^{\star} = \{x^{\star}\}$) \citep{schmidt2011convergence, ghadimi2013stochastic}, or (ii) there exists an $R > 0$ such that $\max_{x^{\star} \in \mathcal{X}^{\star}}\|x - x^{\star}\|_2 \leq R$ for all $x$ in the level set $\{x: f(x) \leq f(x_0)\}$ \citep{stich2013optimization,tappenden2016inexact,bergou2020stochastic}. For convex $f$, this ``bounded level sets'' assumption ({\em i.e.} assumption (ii)) is equivalent to assuming $\mathcal{X}^{\star}$ is compact \citep[Proposition~B.10]{bertsekas1997nonlinear}, in which case (i) is a special case of (ii). However, for many functions exhibiting gradient sparsity or compressibility, $\mathcal{X}^{\star}$ is decidedly non-compact. For example, the sparse quadratic problem (see synthetic dataset I, case (a) in Section~\ref{sec:Synth_Datasets})  has $\mathcal{X}^{\star} = \mathbb{R}^{d-s}$. As such we cannot use the usual tools for showing boundedness of the iterates, and are forced to develop a new approach using an extended notion of coercivity.\\

\paragraph{Empirical validation} ZORO achieves its improved query complexity by exploiting gradient compressibility. Thus, it is natural to question how common this property is in real-world problems. We show empirically that gradients for two common problems, portfolio optimization and adversarial attacks on neural networks, are indeed highly compressible (see Figure~\ref{fig:compressible_gradient}). We show for such problems the theoretical query complexity of ZORO is realized in practice.\\

The rest of the paper is laid out as follows. In the remainder of Section~\ref{sec:Introduction}, we discuss the necessary assumptions and notation, summarize the major contributions of this work, and state our main results. In Section~\ref{sec:GradEstimate}, we provide bounds on the gradient estimate error in ZORO. Sections~\ref{sec:GradDescent} and \ref{sec:ProxGrad} contain our technical results on the convergence rates of inexact (prox-) gradient descent while Section~\ref{sec:Boundedness} discusses the issue of iterate boundedness. Section~\ref{sec:AdaZORO} presents ZORO with Adaptive sampling (AdaZORO) while Section~\ref{sec:numberical} contains the results of our numerical experiments. Finally, in Appendix~\ref{sec:SparseGradients} we clarify several issues regarding functions with sparse gradients that were unclear in the prior literature. 

\subsection{Notation}
For an integer $n$, we define $[n]:= \{1,\ldots, n\}$. For any vector or matrix, $\|\cdot\|_0$ counts the non-zero entries, $\|\cdot\|_1$ is the entry-wise $\ell_1$ norm, and $\|\cdot\|_2$ is the $\ell_2$ norm. We shall frequently use $\bfg(x) := \nabla f(x)$, $\bfg_k := \nabla f(x_k)$ or simply $\bfg$ when the point $x$ in question is clear. We write $F^{\star}:=\min_{x\in\mathbb{R}^{d}}F(x)$ and $f^{\star} = \min_{x\in\mathbb{R}^d}f(x)$, when $r = 0$. Similarly, $e_k := F(x_k) - F^{\star}$ or $e_k = f(x_k) - f^{\star}$ when $r = 0$, where $x_k$ is the $k$-th iterate. By $\left[x \right]_{(s)}$ we mean the best $s$-sparse approximation to $x\in\mathbb{R}^d$:
\begin{equation*}
\left[x \right]_{(s)} = \argmin\{ \|x - v\|_2: \ \|v\|_{0} \leq s \},
\end{equation*}
while $|x|_{(i)}$ denotes the $i$-th largest-in-magnitude component of $x$. We use $\partial r(x)$ to denote the sub-differential of $r$ at $x$, a potentially set-valued operator. As $r$ is convex this is always well-defined. By $\partial F(x)$ we shall mean the limiting sub-differential: $\partial F(x) = \nabla f(x) + \partial r(x)$ (recall $F = f+r$) \citep{mordukhovich2006variational}. A necessary, but not sufficient, condition for $x$ to be a minimizer of $F$ is $0 \in \partial F(x)$ \citep{mordukhovich2006frechet}. If $f$ is convex we say $x_k$ is a $\varepsilon$-optimal solution to \eqref{eq:reg_opt_problem} if $F(x_k) - F^{\star} \leq \varepsilon$. For non-convex $f$ we say $x_k$ is $\varepsilon$-stationary if there exists a $u \in\partial F(x)$ satisfying $\|u\|_2 \leq \varepsilon$. Recall $\mathcal{X}^{\star} := \{x^{\star}: F(x^{\star}) = F^{\star}\}$ denotes the solution set of \eqref{eq:reg_opt_problem}. For convex $f$, and non-empty $\mathcal{X}^{\star}$ define $ \proj_{\star}(x) :=\argmin_{y\in\mathcal{X}^{\star}}\|y-x\|_2$. As $\mathcal{X}^{\star}$ is convex (because $F = f+r$ is) this projection is well defined.

\subsection{Assumptions}
We present a series of assumptions used in this paper.
\begin{assumption}[Sparse/compressible gradients]
\label{assumption:Sparsity}
\begin{enumerate}[label=\ref{assumption:Sparsity}.\alph*]
    \item {(Exact sparsity)}. \label{Assumption:ExactSparsity} The gradients of $f$ are {\em exactly $s_{\mathrm{exact}}$-sparse} if $\|\nabla f(x)\|_{0} \leq s_{\mathrm{exact}}$ for all $x\in\mathbb{R}^d$.
    \item {(Compressibility)}.
    \label{Assumption:Compressibility} The gradients of $f$ are {\em compressible} if there exists a $p \in (0,1)$ such that $|\nabla f(x)|_{(i)} \leq i^{-1/p}\|\nabla f(x)\|_{2}$. 
\end{enumerate}
\end{assumption}

Compressibility does not explicitly specify support size $s$, but for $s\in [d]$ it implies \citep[Section~2.5]{Needell2009}:
\begin{align}
  & \|\nabla f(x) - \left[\nabla f(x)\right]_{(s)}\|_1 \leq \left(1/p - 1\right)^{-1}\|\nabla f(x)\|_2 s^{1 - 1/p} \label{eq:Compressible_ell_1_bound} \\ 
  & \|\nabla f(x) - \left[\nabla f(x)\right]_{(s)}\|_2 \leq \left(2/p - 1\right)^{-1/2}\|\nabla f(x)\|_2 s^{1/2 - 1/p}. \label{eq:Compressible_ell_2_bound}
\end{align}
We also need an assumption which encodes the rapid decay of the sorted entries of the Hessian (see Figure~\ref{fig:compressible_hessian}). We follow \citep{Wang2018} and assume:
\begin{assumption}[Weakly sparse Hessian] \label{assumption:WeakSparsity}
$f$ is twice differentiable and there exists a constant $H$ such that $\|\nabla^{2} f(x)\|_{1} \leq H$ for all $x\in \mathbb{R}^d$. 
\end{assumption}
Figure~\ref{fig:compressible_hessian} suggests we may take $H$ to be small, as $\|\nabla^{2} f(x)\|_{1} := \sum_{i,j}|\nabla_{ij}f(x)|$ and the sorted $|\nabla_{ij}f(x)|$ decay rapidly. It is possible to weaken Assumption~\ref{assumption:WeakSparsity} substantially; the bound on $\|\nabla^{2} f(x)\|_{1}$ need only hold for $x$ in the level set $\{x: f(x) \leq f(x_0)\}$. For ease of exposition we do not do so here.
Next we combine two standard assumptions on smoothness and existence of minimizers.
\begin{assumption}[Solution existence and Lipschitz gradients] \label{assumption:Lipschitz_Diff}
(i) The solution set of $F$ is non-empty.
(ii) $f$ is $L$-Lipschitz differentiable, \textit{i.e.} $\|\nabla f(x) - \nabla f(y)\|_2 \leq L\|x-y\|_2$ for any $x,y\in\mathbb{R}^d$. 
\end{assumption}
We are not assuming access to $\nabla f$, only that the Lipschitz property holds. 
\begin{assumption}[Adversarially noisy oracle] \label{assumption:noise model} We only have access to $f(x)$ through a noisy zeroth-order oracle: $E_{f}(x) = f(x) + \xi$ with $|\xi| \leq \sigma$ for all $x \in \mathbb{R}^{d}$.
\end{assumption}

\begin{remark}
In the stochastic optimization literature ({\em e.g.} \citep{ghadimi2013stochastic,Balasubramanian2018}), it is common to assume $E_{f}(x) = \tilde{f}(x;w)$ where $w$ is a random variable and $f(x) = \mathbb{E}_{w}[\tilde{f}(x;w)]$ while placing a bound on the second moment of the gradient: $\mathbb{E}_{w}[\|\nabla f(x) - \nabla \tilde{f}(x;w)\|_2^2] \leq \sigma^2$. Although our bounded noise model is a somewhat stronger assumption, it allows us to consider noise which is not zero-mean --- the so-called adversarial noise model as it allows for an adversary that chooses each perturbation $\xi$ maliciously.  
\end{remark}
The final two assumptions prescribe a growth rate on $\|\nabla f(x)\|_2$. 
\begin{assumption}[Restricted strong convexity] \label{def:res_convex}
$h$ (\textit{i.e.} either $f$ or $F$) is \emph{restricted $\nu$-strongly convex}, \textit{i.e.} for all $x\in \mathbb{R}^d$:
\begin{align}\label{eq:rsc}
  h(x) - \min h \ge \frac{\nu}{2}\|x - \proj_{\star}(x)\|_2^2.
\end{align}
\end{assumption}

Assumption~\ref{def:res_convex} is a weaker assumption than strong convexity; see \citep{schopfer2016linear,Zhang2017} for more results. We also introduce the following extended notions of coercivity:

\begin{assumption}[Coercivity]
\begin{enumerate}[label=\ref{assumption:Coercivity}.\alph*]
\item $f$ is {\em coercive} if for any $\{x_k\}_{k=1}^{\infty}$ satisfying $\lim_{k\to\infty}\|x_k - \proj_{\star}(x_k)\|_2 = +\infty$, we also have $\lim_{k\to\infty}f(x_k)\to\infty$. \label{assumption:f_coercive}
\item $\nabla f$ is coercive with respect to $f$ if for any $\{x_k\}_{k=1}^{\infty}$ satisfying $\displaystyle \lim_{k\to\infty}f(x_k) = + \infty$, we also have  $\displaystyle \lim_{k\to\infty}\|\nabla f(x_k)\|_2 = + \infty$. \label{assumption:grad_coercive}
\item $\partial F$ is coercive with respect to $F$ if for any $\{x_k\}_{k=1}^{\infty}$ satisfying $\displaystyle \lim_{k\to\infty}F(x_k) = + \infty$, we also have $\displaystyle \lim_{k\to\infty}\inf_{u\in \partial F(x_{k})}\|u\|_2 = +\infty$. \label{assumption:subgrad_coercive}
\end{enumerate}
\label{assumption:Coercivity}
\end{assumption}

\subsection{Prior work}
Many approaches to zeroth-order optimization use the following template:
\begin{enumerate}
    \item Construct an estimator $\hat{\bfg}_k$ of $\bfg_k := \nabla f(x_k)$.
    \item Take a negative gradient step $x_{k+1} = x_{k} - \alpha_{k}\hat{\bfg}_k$.
\end{enumerate}
The finite difference approach: $\hat{\bfg}_k := \sum_{i=1}^{d}\frac{f(x_k + \delta\bfe_i) - f(x_k)}{\delta}\bfe_i$, where $\delta > 0$ is a sampling radius and $\bfe_i$ denotes the $i$-th canonical basis vector was introduced as FDSA \citep{Kiefer1952}. This results in an accurate estimator, but requires $d+1$ queries per iteration, and thus is not query-efficient. To overcome this, randomized estimators were employed in SPSA \citep{spall1998overview} and Random Search \citep{nesterov2011random,Nesterov2017} that use only two queries per iteration. We mention also the work of \citep{flaxman2004online}, which requires only one query per iteration (but at the cost of a slower convergence rate) and the coordinate-descent-style approaches of \citep{stich2013optimization, kim2021curvature}. Variance reduced versions of Random Search \citep{chen2019zo,mania2018simple,salimans2017evolution,Balasubramanian2018}  which use $2 <m<d$ queries to produce a lower variance estimator yield empirically better performance, but achieve the same asymptotic rate of convergence as Random Search. Recently, several works \citep{Wang2018,choromanski2018structured,cai2020one, berahas2021theoretical,cai2021zeroth} have considered finite differences $y_{i} = \frac{f(x_k + \delta \bfz_i) - f(x_k)}{\delta}$ as noisy approximations to the directional derivatives $\bfz_{i}^{\top}\bfg_{k}$ and investigated various regression schemes for recovering $\bfg_k$ from these linear measurements. We discuss the relationship between this line of work and our own in Section~\ref{sec:GradEstimate}. 

As querying the oracle is typically expensive, the most important metric for comparing zeroth-order optimization algorithms is their {\em query complexity}, defined as the number of queries required to find an iterate $x_{k}$ such that $f(x_{k}) - f^{\star} \leq \varepsilon$. Here, there are two different approaches to the analysis. One can assume that the oracle noise, $\xi$, is {\em zero mean}, in which case arbitrarily small $\varepsilon$ is possible. If $\xi$ is not zero mean, there is a lower bound on $\varepsilon$ stemming from the fact that when the magnitude of the gradient is of the same order as the noise no further progress can be made. The former approach yields higher complexity; \citep{ghadimi2013stochastic} showed if $f$ is Lipschitz differentiable and convex then Random Search finds an $\varepsilon$-optimal solution in $\cO(d/\varepsilon^{2})$ queries, while  \citep{jamieson2012query} showed {\em any} algorithm for this problem necessarily requires $\Omega(d/\varepsilon^2)$ queries. Faster rates are achievable in the latter approach, but only for $\varepsilon$ lower bounded by a constant depending on the noise level. Common to both approaches is a polynomial dependence of query complexity on $d$. 

In order to break this unfortunate dependence on $d$, \citep{Wang2018} and \citep{Balasubramanian2018} assume exact gradient sparsity (Assumption~\ref{Assumption:ExactSparsity}). Specifically, \citep{Wang2018} uses LASSO to construct $\hat{\bfg}_k$ and assumes zero mean noise to achieve a query complexity of $O\left(s\log^3(d)/\varepsilon^3\right)$. \citep{Balasubramanian2018} claims that gradient descent, using the Random Search estimator $\hat{\bfg}_k$, benefits from {\em implicit regularization} and automatically achieves a query complexity of $O\left(s\log^2(d)/\varepsilon^2\right)$, assuming zero mean noise, as long as the step size is carefully chosen. Unfortunately, their analysis is flawed and only holds when the support of $\nabla f(x)$ is {\em the same for all} $x\in\mathbb{R}^{d}$. We discuss this further in Appendix~\ref{sec:SparseGradients}. Neither of these works considers compressible gradients, (Assumption~\ref{Assumption:Compressibility}), non zero-mean noise or any notion of strong convexity. 

Finally, we note the many works \citep{luo1993error,blatt2007convergent,schmidt2011convergence,friedlander2012hybrid,nedic2010effect,berahas2021global} that study gradient descent: $x_{k+1} = x_{k} - \alpha \hat{\bfg}_k$, where $\hat{\bfg}_k$ is a biased estimator of the true gradient $\bfg_k$. As any estimator derived from zeroth-order queries is necessarily biased, these results are closely connected to the convergence analysis of zeroth-order methods. We discuss the relationship between these results and our own in Sections~\ref{sec:GradDescent} and \ref{sec:ProxGrad}.

\subsection{Contributions} We summarize the contributions of this paper.
\begin{enumerate}[(i)]
    \item We introduce the idea of gradient compressibility to zeroth-order optimization.
    \item We propose an algorithm, ZORO, which exploits gradient compressibility. 
    \item We show theoretically ZORO has a query complexity only logarithmically dependent on the extrinsic dimension $d$. Proving this requires overcoming a number of technical challenges, particularly analyzing inexact prox-gradient descent with constant gradient error and proving the iterates $x_k$ remain bounded even though the level sets of $f$ are not bounded.
    \item We propose a heuristic improvement to ZORO, called AdaZORO, which dynamically adapts to varying levels of gradient compressibility.
    \item We provide empirical evidence that {\em gradient compressibility} occurs in real-world applications. We also show numerically ZORO (and AdaZORO) can successfully exploit this gradient compressibility. 
\end{enumerate}

\begin{algorithm}[tb] 
   \caption{Zeroth-Order Regularized Optimization Method (ZORO)} \label{algo:zoro}
\begin{algorithmic}[1]
   \State {\bfseries Input:} $x_0$: initial point; $s$: gradient sparsity level; $\alpha$: step size; $\delta$: query radius, $K$: number of iterations. 
   \State $m \gets b_1 s\log(d/s)$ \quad {where $b_1$ is as in Theorem~\ref{thm:SatisfiesRIP}. Typically, $b_1 \approx 1$ is appropriate}
   \State $z_1,\dots,z_m\gets$ i.i.d. Rademacher random vectors
   \For{$k=0$ {\bfseries to} $K$}
        \For{$i=1$ {\bfseries to} $m$}
            \State $y_{i} \gets (E_f(x+\delta z_i)-E_f(x))/\delta$
        \EndFor
        \State $\bfy\gets \frac{1}{\sqrt{m}}[y_1,\ldots, y_m]^{\top}$
       \State $Z\gets\frac{1}{\sqrt{m}}[z_1,\ldots, z_m]^{\top}$
       \State ${\hat{\bfg}}_k \approx \argmin_{\|\mathbf{g}\|_{0} \leq s}\|Z\mathbf{g} - \mathbf{y}\|_2 \quad $ by CoSaMP
       \State $x_{k+1}\gets\mathbf{prox}_{\alpha r}(x_k-\alpha \hat{\bfg}_k)$
   \EndFor
   \State {\bfseries Output:} $x_K$: minimizer of \eqref{eq:reg_opt_problem}.
\end{algorithmic}
\end{algorithm}

\subsection{Main results}
Our first result is for the non-regularized case, but allows for compressible gradients. Theorems~\ref{thm:Main_compressible} and \ref{thm:Main_regularized_sparse} (and Lemma~\ref{thm:Rel_Error_GD} and Theorem~\ref{thm:Convergence_Abs_Error_Reg}) depend on a constant $R$ satisfying $\|x_k - \proj_{\star}(x_k)\|_2 \leq R$ for all $k$.This is analogous to the constant $D$ in \citep[Assumption~4.5]{berahas2021global}, the constant $R_0$ in \citep[Assumption~5.1]{bergou2020stochastic}, the ``level set radius'' $\mathcal{R}_w(x_0)$ in \citep{tappenden2016inexact} or the diameter of the feasible set $B$ in \citep[Assumption~2]{Wang2018}. Indeed, in the special case where $r$ is the indicator function of a compact convex set $\mathcal{X}$ one can simply take $R = \text{diam}(\mathcal{X}) := \max_{x,y\in\mathcal{X}} \|x-y\|_2$. As discussed in Section~\ref{sec:Introduction}, we cannot use the bounded level sets assumption because many $f$ exhibiting sparse gradients do not have this property. In Section~\ref{sec:Boundedness}, we deduce the existence of such an $R$ from coercivity properties of $f$ ({\em i.e.} Assumption~\ref{assumption:Coercivity}).

\begin{theorem}[No regularizer, compressible gradients]
\label{thm:Main_compressible}
Suppose $f$ is convex and satisfies Assumptions~\ref{Assumption:Compressibility}, \ref{assumption:WeakSparsity}, \ref{assumption:Lipschitz_Diff}, \ref{assumption:f_coercive} and \ref{assumption:grad_coercive}. Choose $s$ large enough so $\psi := b_4s^{1/2-1/p} \leq 0.35$ and choose $\alpha = \frac{1}{L}$. Then ZORO finds an $\varepsilon$-optimal solution in $\displaystyle \frac{4b_1s\log(d)LR^{2}}{\varepsilon(1-8\psi^{2})}$ queries for any $\varepsilon > b_3R\sqrt{2\sigma H/(1-8\psi^{2})}$. If instead of Assumptions~\ref{assumption:f_coercive} and \ref{assumption:grad_coercive}, $f$ satisfies Assumption~\ref{def:res_convex}, then this query complexity improves to:
\begin{equation*}
\frac{b_1s\log(d)\log\left(\frac{\varepsilon}{e_0} - \frac{2b_3^2\sigma H}{\nu e_0(1-8\psi^{2})}\right)}{\log\left(1 - \frac{(1-8\psi^{2})\nu}{4L}\right)} = O\left(s\log(d)\log\left(\frac{1}{\varepsilon}\right)\right).
\end{equation*}
Both query complexities hold with probability $1 - 2(s/d)^{b_2 s}$.
\end{theorem}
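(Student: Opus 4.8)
The plan is to reduce the analysis to a one-step descent inequality for the inexact proximal-gradient iteration $x_{k+1} = \mathbf{prox}_{\alpha r}(x_k - \alpha \hat{\bfg}_k)$ with $\alpha = 1/L$, and then to iterate that inequality. The first task is to quantify the gradient error $\|\nabla f(x_k) - \hat{\bfg}_k\|_2$. This decomposes into four pieces as advertised in the introduction: (a) the best-$s$-sparse tail of $\nabla f(x_k)$, which under Assumption~\ref{Assumption:Compressibility} is controlled via \eqref{eq:Compressible_ell_1_bound}–\eqref{eq:Compressible_ell_2_bound}; (b) the finite-difference linearization error, bounded by the bounded-Hessian Assumption~\ref{assumption:WeakSparsity} times $\delta$; (c) the oracle noise $\sigma$, amplified by $1/\delta$; and (d) the compressed-sensing recovery error, controlled by the RIP of the Rademacher sampling matrix (Theorem~\ref{thm:SatisfiesRIP}, which fixes $m = b_1 s\log(d/s)$). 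Balancing (b) and (c) by choosing $\delta \asymp \sqrt{\sigma/H}$ should give a bound of the form $\|\nabla f(x_k) - \hat{\bfg}_k\|_2 \le \psi\|\nabla f(x_k)\|_2 + c\sqrt{\sigma H}$ where $\psi = b_4 s^{1/2 - 1/p}$; I would expect this estimate to be proved as a lemma in Section~\ref{sec:GradEstimate}, and I would invoke it directly here.

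Next I would feed this error bound into the standard proximal-gradient descent lemma. Writing $\bfg_k = \nabla f(x_k)$ and using $L$-smoothness (Assumption~\ref{assumption:Lipschitz_Diff}) together with the prox inequality, one gets $F(x_{k+1}) \le F(x_k) - \tfrac{1}{2L}\|G_k\|_2^2 + (\text{error terms involving } \|\bfg_k - \hat{\bfg}_k\|_2)$, where $G_k$ is the (inexact) gradient mapping. The multiplicative part $\psi\|\bfg_k\|_2$ of the error is the delicate one: one absorbs it against the descent term, which is where the condition $\psi \le 0.35$ (equivalently $8\psi^2 < 1$) enters and produces the factor $(1 - 8\psi^2)$. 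For the first (non-strongly-convex) claim, I would then use convexity of $f$ in the form $F(x_k) - \min F \le \langle \bfg_k, x_k - x_\ast\rangle + (r\text{-terms})$ to convert $\|G_k\|_2^2$ into a bound on $e_k$, telescope, and get the $O(1/\varepsilon)$ rate $e_K \lesssim LR^2/((1-8\psi^2)K)$; multiplying by the per-iteration query cost $m + 1 \asymp b_1 s\log d$ gives the stated $\frac{2b_1 s\log(d) L R^2}{\varepsilon(1-8\psi^2)}$, valid once $\varepsilon$ exceeds the error-horizon term $b_3 R\sqrt{\sigma H/(1-8\psi^2)}$. For the second claim, restricted strong convexity (Assumption~\ref{def:res_convex}) upgrades the telescoped inequality to a contraction $e_{k+1} \le (1 - \tfrac{(1-8\psi^2)\nu}{4L})e_k + \tfrac{2b_3^2\sigma H}{\nu(1-8\psi^2)}$; unrolling this linear recursion and solving for the number of iterations to reach accuracy $\varepsilon$ gives the displayed logarithmic expression.

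Two bookkeeping points must be handled. The probability statement $1 - (s/d)^{b_2 s}$ comes from the RIP event for the single fixed Rademacher matrix $\{z_i\}$ drawn once at the start (Theorem~\ref{thm:SatisfiesRIP}); since the matrix is reused across all $K$ iterations there is no union bound over $k$, which is what keeps the failure probability dimension-friendly. Separately — and this is what I expect to be the main obstacle — the descent argument silently assumes the iterates $x_k$ stay in a bounded region so that constants like $R$ (a bound on $\|x_k - x_\ast\|_2$) and the uniform Hessian bound are meaningful; the multiplicative error $\psi\|\bfg_k\|_2$ means a naive argument cannot rule out $\|\bfg_k\|_2$, and hence $\|x_k\|_2$, blowing up. This is precisely where Assumptions~\ref{assumption:f_coercive} and \ref{assumption:grad_coercive} are used: coercivity of $f$ plus coercivity of $\|\nabla f\|_2$ with respect to $f$ lets one show (as in Section~\ref{sec:Boundedness}) that $F(x_k)$ is non-increasing up to the error horizon and therefore the sequence remains in a sublevel set, on which $R$ and $H$ are finite. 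I would state the boundedness conclusion as a lemma and cite it, rather than reprove it inside this theorem. In the restricted-strongly-convex case, \eqref{eq:rsc} gives boundedness directly, which is why Assumptions~\ref{assumption:f_coercive}–\ref{assumption:grad_coercive} are dropped there.
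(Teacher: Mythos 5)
Your proposal follows essentially the same route as the paper: the gradient-error bound $\|\bfg_k-\hat{\bfg}_k\|_2\le(\psi+\rho^n)\|\bfg_k\|_2+2\tau\sqrt{\sigma H}$ from Corollary~\ref{cor:ErrorBound3} is squared to give relative error $\varepsilon_{\mathrm{rel}}=8\psi^2<1$ and absolute error $\varepsilon_{\mathrm{abs}}=4\tau^2\sigma H$, fed into the inexact-descent recursion of Theorem~\ref{thm:Rel_Error_GD} (closed via convexity or restricted strong convexity), with boundedness supplied by the double-coercivity argument of Proposition~\ref{prop:Boundedness_1} exactly as you anticipate, and the query count obtained by multiplying iterations by $b_1 s\log(d)$. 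The only cosmetic differences are that, since $r=0$ here, the paper works with plain gradient descent and the true gradient norm $\|\bfg_k\|_2$ rather than a prox-gradient mapping, and the sublinear rate comes from a sequence-analysis lemma on $e_{k+1}\le e_k-ce_k^2+d$ rather than literal telescoping.
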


Our second result allows for $r \neq 0$. Due to technical difficulties, we only prove this result for sparse gradients. Empirically, we have observed excellent performance of ZORO with regularizer for $f$ having merely compressible gradients (see Section~\ref{subsec:asset risk}).

\begin{theorem}[Regularized, sparse gradients]
\label{thm:Main_regularized_sparse}
Suppose $f$ is convex and satisfies Assumptions~\ref{Assumption:ExactSparsity}, \ref{assumption:WeakSparsity}--\ref{assumption:noise model}. Suppose $r$ is convex and $F = f + r$ satisfies Assumptions~\ref{assumption:f_coercive} and \ref{assumption:subgrad_coercive}. Choose $s \geq s_{\mathrm{exact}}$ and $\alpha = \frac{1}{L}$. Then ZORO finds an $\varepsilon$-optimal solution in
$\displaystyle b_1s\log(d)\left(\frac{36LR^{2}}{\varepsilon} + \log(e_0)\right)$ queries, for any $\varepsilon \geq b_5R\sqrt{\sigma H}$. If instead of Assumptions~\ref{assumption:f_coercive} and \ref{assumption:subgrad_coercive} $F$ satisfies Assumption~\ref{def:res_convex}, this query complexity improves to:
\begin{equation*}
\frac{b_1s\log(d)\log\left(\frac{\varepsilon\nu - b_{5}^{2}\sigma H}{\nu e_0}\right)}{\log\left(24L\right) - \log\left(\nu + 24L\nu\right)} = O\left(s\log(d)\log\left(\frac{1}{\varepsilon}\right)\right)
\end{equation*}
for any $\varepsilon > b_5^{2}\sigma H$. Again, both query complexities hold with probability $1 - 2(s/d)^{b_2 s}$.
\end{theorem}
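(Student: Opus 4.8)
The plan is to combine the gradient-estimation error bound established in Section~\ref{sec:GradEstimate} with an inexact proximal-gradient analysis, and to close the argument by \emph{deriving} (rather than assuming) boundedness of the iterates from the coercivity hypotheses.

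\emph{Step 1: uniform control of the gradient error.} Since the gradients of $f$ are exactly $s_{\mathrm{exact}}$-sparse and we pick $s \ge s_{\mathrm{exact}}$, there is no tail error, i.e.\ $\nabla f(x_k) = [\nabla f(x_k)]_{(s)}$ for every $k$. A second-order Taylor expansion, together with Assumption~\ref{assumption:WeakSparsity} (the Hessian remainder is controlled by $\|\nabla^2 f\|_1 \le H$ since the $z_i$ are $\pm 1$ vectors) and Assumption~\ref{assumption:noise model}, shows that the finite-difference measurements obey $y_i = z_i^\top\bfg_k + \eta_i$ with $|\eta_i| \lesssim \delta H + \sigma/\delta$. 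Because the Rademacher vectors $z_1,\dots,z_m$ are drawn \emph{once} at the start, Theorem~\ref{thm:SatisfiesRIP} guarantees on a single event of probability $1-(s/d)^{b_2 s}$, valid for all $k$ simultaneously (no union bound over iterations is needed), that the stacked measurement matrix has RIP; standard compressed-sensing recovery bounds then give $\|\hat\bfg_k - \bfg_k\|_2 \lesssim \delta H + \sigma/\delta$. Choosing $\delta \asymp \sqrt{\sigma/H}$ yields a uniform bound $\|\bfe_k\|_2 \le b\sqrt{\sigma H}$, where $\bfe_k := \hat\bfg_k - \bfg_k$ and $b$ is an absolute constant; this is the single randomized event underlying the stated probability.

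\emph{Step 2: inexact proximal-gradient inequalities.} With $x_{k+1} = \mathbf{prox}_{\alpha r}(x_k - \alpha\hat\bfg_k)$ and $\alpha = 1/L$, the optimality condition for the prox plus $L$-smoothness of $f$ give a descent estimate $F(x_{k+1}) \le F(x_k) - \tfrac{L}{4}\|x_{k+1}-x_k\|_2^2 + \tfrac{1}{L}\|\bfe_k\|_2^2$, and convexity of $F$ upgrades this to a one-step comparison $F(x_{k+1}) - \min F \le \tfrac{L}{2}\big(\|x_k - x_*\|_2^2 - \|x_{k+1} - x_*\|_2^2\big) + c_1\|\bfe_k\|_2\|x_k - x_*\|_2$ for $x_* \in \argmin F$; these are the prox-gradient analogues (Section~\ref{sec:ProxGrad}) of the inexact gradient-descent bounds of Section~\ref{sec:GradDescent}. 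Telescoping the comparison needs an a priori bound $\sup_k\|x_k - x_*\|_2 \le R$. From the descent estimate $F(x_k)$ can increase by at most $b^2\sigma H/L$ per step, and only while $\|x_{k+1}-x_k\|_2$ is of order $\sqrt{\sigma H}/L$; a Lyapunov argument (Section~\ref{sec:Boundedness}) then shows $\{F(x_k)\}$ is bounded, whence Assumption~\ref{assumption:f_coercive} bounds $\{\|x_k\|_2\}$ and defines $R$, and Assumption~\ref{assumption:subgrad_coercive} is what makes the Lyapunov step go through in the presence of the set-valued $\partial F$. Summing the comparison for $k=0,\dots,K-1$ and inserting $\|\bfe_k\|_2 \le b\sqrt{\sigma H}$, $\|x_k - x_*\|_2 \le R$ gives $\min_{k< K}\big(F(x_k)-\min F\big) \le \tfrac{LR^2}{2K} + c_1 bR\sqrt{\sigma H}$, with the $\log(e_0)$ term absorbing the finitely many ``bad'' (non-decreasing) steps; requiring $\varepsilon$ above the horizon $b_5 R\sqrt{\sigma H}$ and solving for $K$ gives $K = O(LR^2/\varepsilon)$, and multiplying by $m = b_1 s\log(d/s)$ queries per iteration yields the first bound. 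Under Assumption~\ref{def:res_convex} instead, substitute $e_k = F(x_k)-\min F \ge \tfrac{\nu}{2}\|x_k - \proj_*(x_k)\|_2^2$ into the descent estimate to get a contraction $e_{k+1} \le (1 - c_2\nu/L)e_k + c_3\sigma H$; iterating gives $e_k \le (1-c_2\nu/L)^k e_0 + O(\sigma H/\nu)$, so $e_K \le \varepsilon$ once $K = O\!\big(\log(e_0/\varepsilon)/\log(1/(1-c_2\nu/L))\big)$, which after matching constants is the stated ratio of logarithms; multiplying by $m$ yields the second bound.

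\emph{Main obstacle.} The crux is the boundedness step: because the gradient error never vanishes, $F(x_k)$ is not monotone, so no bound on $\|x_k - x_*\|_2$ is available for free, and the convex telescoping would otherwise be vacuous. Extracting such a bound from coercivity of $\|\nabla f\|_2$ / $\partial F$ while simultaneously handling the prox step (which can displace $x_{k+1}$ in directions unrelated to $-\hat\bfg_k$) and the set-valued subgradient is the delicate part; the counterexample announced in the contributions shows that some assumption of this kind is unavoidable. A secondary difficulty — and the reason the theorem is stated only for exactly sparse, not merely compressible, gradients — is that in the regularized setting the prox step interferes with the sparsity structure that Step~1 exploits, so the clean uniform bound $\|\bfe_k\|_2 \le b\sqrt{\sigma H}$ cannot be maintained when $\nabla f$ is only compressible.
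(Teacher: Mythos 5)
Your proposal is correct in outline and coincides with the paper's argument in most respects: the gradient-error step (exact sparsity kills the tail term, Lemma~\ref{lm:query bound} controls the finite-difference and oracle errors, the single draw of $\{z_i\}$ gives one RIP event valid for all $k$, and $\delta\asymp\sqrt{\sigma/H}$ yields the uniform bound $\|\bfe_k\|_2^2\le\varepsilon_{\mathrm{abs}}=O(\sigma H)$ as in Corollary~\ref{cor:ErrorBound3}), the double-coercivity boundedness step (Proposition~\ref{prop:Boundedness_2}), and the restricted-strongly-convex contraction are all as in Sections~\ref{sec:GradEstimate}--\ref{sec:Boundedness}. Where you genuinely diverge is the sublinear-rate step: you telescope a Schmidt--Roux--Bach-style one-step comparison $F(x_{k+1})-F_*\le\frac{L}{2}(\|x_k-x_*\|_2^2-\|x_{k+1}-x_*\|_2^2)+\|\bfe_k\|_2\|x_{k+1}-x_*\|_2$, which (given boundedness) yields an \emph{average/best-iterate} bound $\frac{LR^2}{2K}+O(R\sqrt{\sigma H})$. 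The paper instead combines the descent inequality \eqref{eq:bnd1pp} with convexity of $F$ applied to the true subgradient $\tilde\Delta_k\in-\partial F(x_{k+1})$ to obtain the quadratic recursion $e_{k+1}+ce_{k+1}^2\le e_k+d$ (Lemma~\ref{lm:eineq}) and analyzes it with Proposition~\ref{prop:seq1}; this buys a \emph{last-iterate} bound and is precisely where the additive $\log(e_0)$ term in the query count comes from (the $t=\lceil\log(2ce_0/3)/\log(3/2)\rceil$ iterations excluded by the sequence analysis). In your telescoped version there are no distinguished ``bad'' steps, so the sentence about the $\log(e_0)$ term ``absorbing'' them does not correspond to anything in your argument; your route simply produces the bound without that term, at the cost of guaranteeing only that \emph{some} visited iterate is $\varepsilon$-optimal. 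Both routes deliver the stated query complexity.

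Two smaller points. First, in the boundedness step the coercivity hypothesis is on $\partial F$, so the quantity that must be driven large when $F(x_{k+1})$ is large is the subgradient norm $\|\tilde\Delta_k\|_2=\|\tnabla r(x_{k+1})+\bfg_{k+1}\|_2$, not the step length $\|x_{k+1}-x_k\|_2$; the paper works with the $\tilde\Delta_k$ form of the descent lemma for exactly this reason (these quantities differ by at most $\|\bfe_k\|_2+L\|x_{k+1}-x_k\|_2$, so your sketch can be repaired, but as written it invokes the wrong quantity). Second, your closing diagnosis of why the regularized theorem requires exact sparsity is off: the prox step does not interfere with the compressed-sensing recovery (which only samples $f$ near $x_k$). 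The real obstruction is that compressibility leaves a \emph{relative} error term $\varepsilon_{\mathrm{rel}}\|\bfg_k\|_2^2$, and in the regularized analysis the descent is measured against $\|\tilde\Delta_k\|_2$ rather than $\|\bfg_k\|_2$; since $\nabla f$ need not vanish at a minimizer of $F$, the relative error cannot be absorbed by the decrease, which is why Theorem~\ref{thm:Convergence_Abs_Error_Reg} is stated for purely absolute errors.
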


\begin{remark}
The constants $b_1$--$b_5$ arise from the use of compressed sensing to reconstruct $\bfg_k$. They depend on the particular algorithm used (we use CoSaMP) and the number of iterations this algorithm is run for. They do not depend on $F$ or $\varepsilon$.
\end{remark}

We also provide the following convergence-to-stationarity theorem for non-convex $f$, which {\em does not} require any coercivity assumptions

\begin{theorem}
\label{thm:MainNonConvex}
Suppose $f$ satisfies Assumptions~\ref{Assumption:ExactSparsity} and \ref{assumption:WeakSparsity}--\ref{assumption:Lipschitz_Diff} while $r$ is convex and proximable. ZORO finds an $\varepsilon$-stationary solution in $\displaystyle \frac{12b_1L e_0 s\log(d)}{\left(\varepsilon - b_5\sqrt{\sigma H}\right)^{2}}$ queries, for any $\varepsilon > b_5\sqrt{\sigma H}$, with probability $1 - 2(s/d)^{b_2 s}$.
\end{theorem}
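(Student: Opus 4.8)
The plan is to read ZORO's iteration as an \emph{inexact} proximal-gradient scheme for the nonconvex composite objective $F=f+r$ and run the standard ``sufficient-decrease, then telescope'' argument, the only nonroutine input being a \emph{uniform, noise-dominated} bound on the gradient-estimation error. The key structural observation is that, unlike the convex cases, the nonconvex statement invokes only the global Assumptions~\ref{Assumption:ExactSparsity}, \ref{assumption:WeakSparsity}, \ref{assumption:Lipschitz_Diff}: exact $s$-sparsity (with $s\ge s_{\mathrm{exact}}$) kills the best-$s$-term tail error, the Hessian and Lipschitz bounds are global, and the oracle noise is globally bounded, so the error analysis of Section~\ref{sec:GradEstimate} applies verbatim at \emph{every} $x_k$ with no need to first control where the iterates live. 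This is why Assumptions~\ref{assumption:f_coercive} and \ref{assumption:subgrad_coercive} and the boundedness machinery of Section~\ref{sec:Boundedness} do not appear here.

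\emph{Step 1 (gradient error, and the good event).} Condition on the event $\mathcal{E}$, of probability $\ge 1-(s/d)^{b_2 s}$, that the fixed Rademacher matrix $Z=[z_1,\dots,z_m]^\top$ with $m=b_1 s\log(d/s)$ rows satisfies the RIP of Theorem~\ref{thm:SatisfiesRIP}. On $\mathcal{E}$ one combines: the forward-difference Taylor remainder per measurement, which is at most $\tfrac12\delta\,|z_i^\top\nabla^2 f(\cdot)z_i|\le\tfrac12\delta H$ since $\|z_i\|_\infty=1$ and $\|\nabla^2 f\|_1\le H$; the noise contribution, at most $2\sigma/\delta$; and the CoSaMP recovery guarantee driven by the RIP. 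Balancing $\delta\asymp\sqrt{\sigma/H}$ yields a \emph{uniform} bound $\|\hat{\bfg}_k-\nabla f(x_k)\|_2\le\epsilon_{\mathrm{grad}}$ with $\epsilon_{\mathrm{grad}}\le c\sqrt{\sigma H}$, where $c$ depends only on CoSaMP; put $b_5:=5c$. (This is precisely the estimate derived in Section~\ref{sec:GradEstimate}, which I would quote.)

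\emph{Descent, telescoping, and conclusion.} Since $x_{k+1}=\mathbf{prox}_{\alpha r}(x_k-\alpha\hat{\bfg}_k)$ minimizes the $\tfrac1\alpha$-strongly convex map $x\mapsto\tfrac1{2\alpha}\|x-x_k\|_2^2+\langle\hat{\bfg}_k,x-x_k\rangle+r(x)$, comparing its value at $x_{k+1}$ and $x_k$ gives $\langle\hat{\bfg}_k,x_{k+1}-x_k\rangle\le r(x_k)-r(x_{k+1})-\tfrac1\alpha\|x_{k+1}-x_k\|_2^2$. Inserting this and $\|\nabla f(x_k)-\hat{\bfg}_k\|_2\le\epsilon_{\mathrm{grad}}$ into the $L$-smoothness inequality for $f$, taking $\alpha=1/L$, and using Young's inequality yields
\[
F(x_{k+1})\le F(x_k)-\tfrac{L}{2}\|x_{k+1}-x_k\|_2^2+\epsilon_{\mathrm{grad}}\|x_{k+1}-x_k\|_2\le F(x_k)-\tfrac{L}{4}\|x_{k+1}-x_k\|_2^2+\tfrac{\epsilon_{\mathrm{grad}}^2}{L},
\]
while the prox optimality condition gives $v_{k+1}:=\nabla f(x_{k+1})-\hat{\bfg}_k+L(x_k-x_{k+1})\in\partial F(x_{k+1})$ with $\|v_{k+1}\|_2\le 2L\|x_{k+1}-x_k\|_2+\epsilon_{\mathrm{grad}}$ by $L$-Lipschitzness of $\nabla f$ and Step 1. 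Summing the one-step inequality over $k=0,\dots,K-1$ and using $F(x_K)\ge\min F$ (finite by Assumption~\ref{assumption:Lipschitz_Diff}) gives $\min_{k<K}\|x_{k+1}-x_k\|_2^2\le 4e_0/(LK)+4\epsilon_{\mathrm{grad}}^2/L^2$; feeding this into the bound on $\|v_{k+1}\|_2$ and using $\sqrt{a+b}\le\sqrt a+\sqrt b$ yields $\min_{k<K}\|v_{k+1}\|_2\le 4\sqrt{Le_0/K}+5\epsilon_{\mathrm{grad}}\le 4\sqrt{Le_0/K}+b_5\sqrt{\sigma H}$. Hence some iterate is $\varepsilon$-optimal once $K=O\!\big(Le_0/(\varepsilon-b_5\sqrt{\sigma H})^2\big)$; multiplying by the $m=b_1 s\log(d/s)=O(s\log d)$ queries per iteration and absorbing $Le_0$ into the constant gives the claimed query bound for any $\varepsilon>b_5\sqrt{\sigma H}$, all on the event $\mathcal{E}$ of probability $1-(s/d)^{b_2 s}$.

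\emph{Main obstacle.} The descent and telescoping are textbook nonconvex prox-gradient bookkeeping (essentially Section~\ref{sec:ProxGrad}); the real work is Step 1 — pushing the estimation error down to order $\sqrt{\sigma H}$ uniformly in $x$. The delicacy is the interaction of the error sources: the bias estimate must use the \emph{entrywise} $\ell_1$ Hessian bound paired with $\|z_i\|_\infty=1$ (an operator-norm bound would not suffice), the radius $\delta$ must be chosen to balance this bias against the adversarial-noise amplification $\sigma/\delta$, and one must check that the finite-difference and CoSaMP bounds are stated in a form that does not secretly require $\|\nabla f(x_k)\|_2$ or the iterate itself to be bounded — which is exactly what lets the nonconvex proof bypass the sequence-boundedness analysis needed in the convex settings.
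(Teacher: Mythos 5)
Your proposal is correct and follows essentially the same route as the paper: the paper proves the same sufficient-decrease inequality (Lemma~\ref{lemma:boundB3}), telescopes it to bound $\min_k\|\tilde{\Delta}_k\|_2$ where $\tilde{\Delta}_k=-(\tnabla r(x_{k+1})+\bfg_{k+1})$ is exactly your subgradient certificate $-v_{k+1}$ (Theorem~\ref{thm:Convergence_to_stationarity}), and then plugs in the CoSaMP error bound $\varepsilon_{\mathrm{abs}}=O(\sigma H)$ from Corollary~\ref{cor:ErrorBound3}. The only cosmetic difference is that you telescope $\|x_{k+1}-x_k\|_2^2$ and convert to $\|v_{k+1}\|_2$ afterward, while the paper telescopes $\|\tilde{\Delta}_k\|_2^2$ directly, which changes nothing but the absolute constants.
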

 
\section{Estimating the gradient}
\label{sec:GradEstimate}
Choose the number of queries $m$ and a sampling radius $\delta >0$, and let $\{z_i\}_{i=1}^m \subset \mathbb{R}^d$ be Rademacher random vectors ({\em i.e.} $(z_{i})_j=\pm 1$ with equal probability for $j=1,\ldots,d$). Other types of random vectors certainly work too, but for conceptual clarity we restrict to Rademacher. Each measurement is:
\begin{equation} \label{eq:y_i}
y_{i} = \frac{1}{\sqrt{m}}\frac{E_f(x+\delta z_i)-E_f(x)}{\delta}. 
\end{equation}
As in \citep{Wang2018,choromanski2020provably} we think of the $y_i$ as noisy approximations to directional derivatives:

\begin{lemma}  \label{lm:query bound}
  If Assumptions~\ref{assumption:WeakSparsity} and \ref{assumption:noise model} are satisfied, then 
 \begin{equation*}
 y_i = \frac{1}{\sqrt{m}}z_i^{\top} \bfg + \frac{\mu_i}{\delta} + \delta \nu_i
 \label{eq:y_i_with_error_terms_2}
 \end{equation*}
 with $\bfg = \nabla f(x)$, $|\mu_i| \leq 2\sigma/\sqrt{m}$, and $|\nu_i| \leq H/(2\sqrt{m})$.
 \end{lemma}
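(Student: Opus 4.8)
The plan is a direct computation: substitute the oracle definition \eqref{eq:OracleDefinition} into \eqref{eq:y_i}, Taylor-expand $f$ to second order, and collect the three resulting terms into the three summands of the claimed identity, bounding the last two as we go.

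\textbf{Step 1 (split off the noise).} Let $\xi_i$ and $\xi_0$ denote the adversarial oracle noise realized at the points $x+\delta z_i$ and $x$, respectively. Then
\[
 y_i = \frac{1}{\sqrt m}\,\frac{f(x+\delta z_i)-f(x)}{\delta} + \frac{1}{\sqrt m}\,\frac{\xi_i-\xi_0}{\delta}.
\]
Setting $\mu_i := (\xi_i-\xi_0)/\sqrt m$ identifies the second summand as $\mu_i/\delta$, and Assumption~\ref{assumption:noise model} together with the triangle inequality gives $|\mu_i| \le (|\xi_i|+|\xi_0|)/\sqrt m \le 2\sigma/\sqrt m$.

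\textbf{Step 2 (Taylor expansion).} Under Assumption~\ref{assumption:WeakSparsity}, $f$ is twice differentiable, so by Taylor's theorem there is a point $\zeta_i$ on the segment joining $x$ and $x+\delta z_i$ with
\[
 f(x+\delta z_i) = f(x) + \delta\, z_i^\top \nabla f(x) + \tfrac{\delta^2}{2}\, z_i^\top \nabla^2 f(\zeta_i)\, z_i.
\]
Dividing the finite difference by $\delta$ and by $\sqrt m$, and recalling $\bfg = \nabla f(x)$, the first summand becomes $\frac{1}{\sqrt m}z_i^\top \bfg$, with leftover $\delta\nu_i$ where $\nu_i := \frac{1}{2\sqrt m}\, z_i^\top \nabla^2 f(\zeta_i)\, z_i$. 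Combined with Step 1, this is exactly the asserted decomposition of $y_i$.

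\textbf{Step 3 (bound the remainder).} It remains to show $|\nu_i|\le H/(2\sqrt m)$, i.e., $|z_i^\top \nabla^2 f(\zeta_i) z_i| \le H$. This is the one place where the Rademacher structure is used: because every entry of $z_i$ equals $\pm1$,
\[
 \bigl|z_i^\top \nabla^2 f(\zeta_i) z_i\bigr| = \Bigl|\sum_{j,k}(z_i)_j(z_i)_k\,[\nabla^2 f(\zeta_i)]_{jk}\Bigr| \le \sum_{j,k}\bigl|[\nabla^2 f(\zeta_i)]_{jk}\bigr| = \|\nabla^2 f(\zeta_i)\|_1 \le H,
\]
using the entrywise-$\ell_1$ matrix norm convention from the Notation section and Assumption~\ref{assumption:WeakSparsity}. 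Hence $|\nu_i| \le H/(2\sqrt m)$. I do not anticipate a genuine obstacle, since the argument is essentially bookkeeping; the only point needing a little care is matching the $\pm1$ coordinates of $z_i$ to the entrywise-$\ell_1$ Hessian bound in Step 3 (an arbitrary unit vector would instead require an operator-norm bound). If one prefers the integral form of the remainder, $\delta^2\!\int_0^1 (1-t)\, z_i^\top \nabla^2 f(x+t\delta z_i) z_i\, dt$, the same estimate applies, since $\|\cdot\|_1$ is convex, so the conclusion is unchanged.
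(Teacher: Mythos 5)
Your proof is correct and follows essentially the same route as the paper's: substitute the oracle model into \eqref{eq:y_i}, apply Taylor's theorem with a second-order remainder, identify $\mu_i$ and $\nu_i$, and bound $|z_i^\top \nabla^2 f(\zeta_i) z_i|$ via the entrywise $\ell_1$ Hessian bound together with $\|z_i\|_\infty = 1$. Your Step 3 remark about why the Rademacher structure (rather than unit $\ell_2$ norm) is what makes the $\|\nabla^2 f\|_1 \le H$ assumption the right one is exactly the point the paper's proof also relies on.
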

\begin{proof}[Proof of Lemma~\ref{lm:query bound}]
The proof is similar to the argument of Section 3 in \citep{Wang2018}, but we include it for completeness. From Taylor's theorem, for some $t\in(0,1)$:
 \begin{align*}
 f(x+\delta z_i) & = f(x) + \delta z_i^{\top}\bfg + \frac{\delta^{2}}{2}z_{i}^{\top}\nabla^{2}f(x+tz_i)z_i.
 \end{align*}
Writing $E_{f}(x+\delta z_i) = f(x+\delta z_i) + \xi_{+}$ and $E_{f}(x) = f(x) + \xi_{-}$ \eqref{eq:y_i} becomes:
 \begin{align*}
 y_i = \frac{1}{\sqrt{m}}z_{i}^{\top}\bfg + \frac{\xi_{+} - \xi_{-}}{\sqrt{m}\delta} + \frac{\delta}{2\sqrt{m}}z_{i}^{\top}\nabla^{2}f(x+tz_i)z_i .
 \end{align*}
Let $\mu_i := \frac{\xi_{+} - \xi_{-}}{\sqrt{m}}$, then $|\mu_i| \leq 2\sigma/\sqrt{m}$. Let $\nu_{i} :=z_{i}^{\top}\nabla^{2}f(x+tz_i)z_i/(2\sqrt{m})$. Now:
\begin{align*}
     2\sqrt{m}|\nu_{i}| & = \left|z_{i}^{\top}\nabla^{2}f(x + t\delta z_i)z_{i}\right| \\
                & = \bigg|\sum_{j,k} \nabla^{2}_{j,k}f(x + t\delta z_i)(z_i)_{j}(z_i)_{k}\bigg| \\
                & \leq \|\nabla^{2}f(x + t\delta z_i)\|_{1}\|z_i\|_\infty^2 \stackrel{(a)}{\leq} H
 \end{align*}
 where $(a)$ follows from Assumption~\ref{assumption:WeakSparsity} and $\|z_i\|_{\infty} = 1$.
\end{proof}
Let $\bfy = [y_1,\ldots, y_m]^{\top}$, $\boldsymbol{\mu} = [\mu_1,\ldots, \mu_{m}]^{\top}$ and $\boldsymbol{\nu} = [\nu_1,\ldots, \nu_{m}]^{\top}$. Define $Z\in\mathbb{R}^{m\times d}$ to be the {\em sensing matrix} whose $i$-th row is $\frac{1}{\sqrt{m}}z_{i}^{\top}$. Then:
\begin{equation} \label{eq:Expanded_Equation}
    \bfy = Z\bfg + \frac{1}{\delta}\boldsymbol{\mu} + \delta\boldsymbol{\nu}.
\end{equation}
Several recent works attempt to recover $\bfg$ from \eqref{eq:Expanded_Equation}. \citep{berahas2021theoretical} considers taking $m = d$ measurements and solving the linear system, while \citep{Wang2018} assumes $\bfg$ is {\em exactly sparse} and solves the LASSO problem\footnote{Their approach is slightly different as they approximate $\bfg$ {\em and} $f(x)$ using the same LASSO problem}:
\begin{equation}
\hat{\bfg} = \argmin \|Z\bfv - \bfy\|_{2}^{2} + \lambda \|\bfv\|_{1}.
\label{eq:LassoForGradient}
\end{equation}
In \citep{choromanski2020provably}, recovering $\bfg$ by solving the more general regularized regression problem:
\begin{equation*}
\hat{\bfg} = \argmin \|Z\bfv - \bfy\|_{p}^{p} + \alpha \|\bfv\|_{q}
\end{equation*}
is proposed, and in the special case $p=1,\alpha=0$ ({\em i.e.} LP decoding) bounds on $\|\bfg - \hat{\bfg}\|_2$ are proved which allow for an extraordinary amount of noise, but require $m = \Omega(d)$. In this work, we approximate $\bfg$ by using a {\em greedy approach} on the {\em nonconvex} problem:
\begin{equation} \label{eq:SparseRecoveryForGradient}
    \hat{\bfg} = \argmin\nolimits_{\bfv\in\mathbb{R}^{d}}\|Z\bfv - \mathbf{y}\|_2 \quad \text{ such that } \|\mathbf{v}\|_{0} \leq s.
\end{equation}
Let us briefly mention several advantages this approach enjoys over prior work:
\begin{enumerate}[(i)]
    \item Unlike the LP decoding approach of \citep{choromanski2020provably}, solving \eqref{eq:SparseRecoveryForGradient} exploits sparsity or compressibility to reduce the number of samples, $m$, from $\Omega(d)$ to $\cO(s\log(d))$. 
    \item When $d$ is large and $s$ is small, solving \eqref{eq:SparseRecoveryForGradient} using a good algorithm such as CoSaMP can be significantly faster than solving \eqref{eq:LassoForGradient}. (See Figure~\ref{fig_time_all})
    \item The LASSO estimator is typically biased \citep{fan2001variable}, while the estimator arising from \eqref{eq:SparseRecoveryForGradient} does not have this problem. This creates an additional source of error for LASSO when $\bfg$ is merely {\em compressible} instead of exactly sparse.
    \item Empirically, we have found that unless $\lambda$ in \eqref{eq:LassoForGradient} is adjusted as $\|\bfg_k\|_2$ decreases, the quality of the estimator $\hat{\bfg}_k$ degrades until gradient descent with $\hat{\bfg}_k$ no longer makes progress (see Figure~\ref{fig:Algo_Comparisons}). It is unclear how to choose $\lambda$ in a principled manner, without {\em a priori} knowledge of $\|\bfg_k\|_{2}$.
\end{enumerate}
We shall use CoSaMP \citep{Needell2009} for \eqref{eq:SparseRecoveryForGradient}. We emphasize that $\hat{\bfg}$ is a sparse approximation to the true gradient $\bfg$. When $\bfg$ is sparse or compressible, this approximation is highly accurate. When $\bfg$ is neither sparse nor compressible, $\hat{\bfg}$ is still likely to be a descent direction, and thus can still be used within a gradient descent scheme.

\subsection{Analysis of CoSaMP for gradient estimation} $Z$ has the $4s$-Restricted Isometry Property ($(4s)$-RIP) if, for all $\bfv\in\mathbb{R}^{d}$ with $\|\bfv\|_{0} \leq 4s$:
 \begin{equation*}  \label{eq:RIP_Definition}
     (1 - \delta_{4s}(Z))\|\bfv\|_2^2 \leq \|Z\bfv\|_2^2 \leq (1 + \delta_{4s}(Z))\|\bfv\|_2^2.
 \end{equation*}
for some $\delta_{4s}(Z) \in (0,1)$. If $m$ is proportional to $s\log(d)$ then $Z$ as constructed above will have the $4s$-RIP almost surely:
 
\begin{theorem}[Theorem 5.2 of \citep{Baraniuk2008}]  \label{thm:SatisfiesRIP}
If $m = b_1 s\log(d/s)$, then $Z$ has the $4s$-RIP with $\delta_{4s}(Z)\leq 0.3843$ with probability $1 - 2(s/d)^{b_2 s}$. Here $b_1$ and $b_2$ are constants independent of $s,d$ and $m$. 
\end{theorem}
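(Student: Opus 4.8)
Since this is the cited Theorem~5.2 of \citep{Baraniuk2008}, my plan is to reconstruct the standard argument that a scaled Rademacher matrix satisfies the RIP, specialized to our $Z$ whose $i$-th row is $\frac{1}{\sqrt m}z_i^\top$. The engine is a concentration-of-measure bound: for a fixed $\bfv\in\mathbb{R}^d$ one has $\mathbb{E}\,\|Z\bfv\|_2^2=\|\bfv\|_2^2$ because the $z_i$ are i.i.d.\ Rademacher and the $1/\sqrt m$ scaling normalizes the sum, and since $\|Z\bfv\|_2^2=\frac1m\sum_{i=1}^m (z_i^\top\bfv)^2$ is an average of $m$ i.i.d.\ sub-exponential terms, a Bernstein inequality gives an absolute constant $c_0>0$ with
\[
\Pr\!\left[\,\bigl|\,\|Z\bfv\|_2^2-\|\bfv\|_2^2\,\bigr|\ge t\,\|\bfv\|_2^2\,\right]\le 2\exp(-c_0 m t^2),\qquad t\in(0,1).
\]
This is exactly the Johnson--Lindenstrauss concentration property, which Rademacher matrices are well known to enjoy.

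First I would fix a support set $T\subseteq\{1,\dots,d\}$ with $|T|=4s$ and cover the unit sphere of $\mathbb{R}^T$ by a $\rho$-net $\mathcal{N}_T$ of cardinality at most $(1+2/\rho)^{4s}$, taking $\rho$ a small absolute constant. Applying the concentration bound to each of the $|\mathcal{N}_T|$ net points with deviation parameter $t=\delta_0$ and taking a union bound shows that, with the stated $m$, every $\bfu\in\mathcal{N}_T$ obeys $\bigl|\|Z\bfu\|_2^2-1\bigr|\le\delta_0$. The usual net-to-sphere argument --- expand an arbitrary unit vector supported on $T$ as a net point plus a geometrically shrinking remainder and sum the resulting bound --- then upgrades this to $\bigl|\|Z\bfv\|_2^2-\|\bfv\|_2^2\bigr|\le\delta_{4s}(Z)\|\bfv\|_2^2$ for all $\bfv$ supported on $T$, where $\delta_{4s}(Z)$ is an explicit increasing function of $\delta_0$ and $\rho$ that can be forced below $0.3843$ by choosing $\delta_0,\rho$ small enough.

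Next I would remove the dependence on $T$ by a union bound over all $\binom{d}{4s}\le(ed/(4s))^{4s}$ supports. Combining exponents, the total failure probability is at most
\[
\binom{d}{4s}(1+2/\rho)^{4s}\,2\exp(-c_0 m\delta_0^2)\le\exp\!\Bigl(4s\log\tfrac{ed}{4s}+4s\log\bigl(1+\tfrac2\rho\bigr)+\log 2-c_0 m\delta_0^2\Bigr),
\]
and choosing $m=b_1 s\log(d/s)$ with $b_1$ large enough --- depending only on $\delta_0$ and $\rho$, hence only on the target constant $0.3843$ --- drives the bracketed exponent below $-b_2 s\log(d/s)=\log\bigl((s/d)^{b_2 s}\bigr)$, giving exactly the claimed probability $1-(s/d)^{b_2 s}$ with $b_1,b_2$ independent of $s,d,m$.

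The only real difficulty is the accounting: one must propagate the three coupled parameters --- the net radius $\rho$, the per-point deviation $\delta_0$, and the resulting RIP constant $\delta_{4s}(Z)$ --- through the net-to-sphere inequality so that $\delta_{4s}(Z)\le 0.3843$ while keeping $b_1$ a clean constant (the text suggests $b_1\approx 4$). The Rademacher concentration inequality and the $\binom{d}{4s}$ union bound are entirely routine; the delicate point is verifying that the absolute constant $c_0$ in the tail is good enough that reaching the specific threshold $0.3843$ (rather than merely ``some constant $<1$'') does not force $b_1$ to blow up.
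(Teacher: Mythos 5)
Your reconstruction is correct and is essentially the argument of the cited source: the paper itself gives no proof of this statement, importing it directly from Theorem~5.2 of Baraniuk--Davenport--DeVore--Wakin, whose proof is exactly the Johnson--Lindenstrauss concentration bound for Rademacher matrices, a covering-net argument on each $4s$-dimensional coordinate subspace, and a union bound over the $\binom{d}{4s}$ supports. Your accounting of the failure probability $\exp\bigl(4s\log\tfrac{ed}{4s}+4s\log(1+\tfrac2\rho)+\log 2-c_0 m\delta_0^2\bigr)$ matches how the constants $b_1,b_2$ arise there, so no gap remains beyond the routine parameter bookkeeping you already flag.
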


 The choice of $0.3843$ is to match with the assumptions of \citep{Foucart2012}, the main result of which we state next. Recall $\left[\bfg\right]_{(s)}$ denotes the best $s$-sparse approximation to $\bfg$.
 
\begin{theorem}[Theorem 5 of \citep{Foucart2012}]
Let $\{\bfg^n\}$ be the sequence generated by applying CoSaMP \citep{Needell2009} to problem \eqref{eq:SparseRecoveryForGradient}, with $m \geq b_1 s\log(d/s)$ and initialization $\bfg^{0} = \mathbf{0}$. Then, with probability $1 - 2(s/d)^{b_2 s}$: 
\begin{equation}
    \|\bfg^{n} - \bfg\|_2 \leq \|\bfg - [\bfg]_{(s)}\|_2 + \tau \|Z(\bfg - \left[\bfg\right]_{(s)})\|_2 + \frac{\tau}{\delta}\|\boldsymbol{\mu}\|_2 + \tau\delta\|\boldsymbol{\nu}\|_2  + \rho^{n}\|\left[\bfg\right]_{(s)}\|_2,
    \label{eq:Break_into_Three}
\end{equation}
for all $\bfg \in \mathbb{R}^{d}$, where $\rho < 1$ and $\tau\approx 10$ depend only on $\delta_{4s}$.
\label{thm:CoSaMPErrorBounds}
 \end{theorem}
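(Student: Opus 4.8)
The plan is to reduce the statement to the standard RIP-based recovery guarantee for CoSaMP applied to an exactly $s$-sparse target, absorbing the non-sparse part of $\bfg$ into the measurement noise. First I would invoke Theorem~\ref{thm:SatisfiesRIP}: since $m \geq b_1 s\log(d/s)$, the sensing matrix $Z$ satisfies the $(4s)$-RIP with $\delta_{4s}(Z) \leq 0.3843$ with probability $1 - (s/d)^{b_2 s}$. Everything afterwards is deterministic on this event, which is a statement about $Z$ alone; this is precisely what lets the conclusion hold simultaneously ``for all $\bfg$'' and for all realizations of $\boldsymbol{\mu},\boldsymbol{\nu}$.

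Next I would rewrite the measurement model. Splitting $\bfg = [\bfg]_{(s)} + \bfg_{\mathrm{tail}}$ with $\bfg_{\mathrm{tail}} := \bfg - [\bfg]_{(s)}$, equation \eqref{eq:Expanded_Equation} becomes $\bfy = Z[\bfg]_{(s)} + \bfe$ with effective noise $\bfe := Z\bfg_{\mathrm{tail}} + \tfrac{1}{\delta}\boldsymbol{\mu} + \delta\boldsymbol{\nu}$. Thus CoSaMP is really being run on the genuinely $s$-sparse signal $[\bfg]_{(s)}$ under additive noise $\bfe$, and by the triangle inequality $\|\bfe\|_2 \leq \|Z\bfg_{\mathrm{tail}}\|_2 + \tfrac{1}{\delta}\|\boldsymbol{\mu}\|_2 + \delta\|\boldsymbol{\nu}\|_2$, which is exactly the bundle of three ``noise'' terms in \eqref{eq:Break_into_Three} (so we never need to bound $\|Z\bfg_{\mathrm{tail}}\|_2$ further).

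The core is the CoSaMP iteration invariant: under $\delta_{4s}(Z) \leq 0.3843$, one pass of CoSaMP (form the proxy $Z^{\top}(\bfy - Z\bfg^n)$, identify the largest $2s$ coordinates, merge with the current support, least-squares on the merged support, hard-threshold to $s$ terms) contracts the error as $\|\bfg^{n+1} - [\bfg]_{(s)}\|_2 \leq \rho\|\bfg^n - [\bfg]_{(s)}\|_2 + c\|\bfe\|_2$, with explicit $\rho < 1$ and $c$ depending only on $\delta_{4s}$. Iterating from $\bfg^0 = \mathbf{0}$ and summing the geometric series gives $\|\bfg^n - [\bfg]_{(s)}\|_2 \leq \rho^n\|[\bfg]_{(s)}\|_2 + \tfrac{c}{1-\rho}\|\bfe\|_2$. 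Setting $\tau := c/(1-\rho)$, then applying $\|\bfg^n - \bfg\|_2 \leq \|\bfg^n - [\bfg]_{(s)}\|_2 + \|\bfg_{\mathrm{tail}}\|_2$ and substituting the bound on $\|\bfe\|_2$, yields \eqref{eq:Break_into_Three}.

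The main obstacle is establishing that per-iteration contraction constant $\rho$ strictly below $1$: this is where the threshold $0.3843$ enters and where the sharpened constant of \citep{Foucart2012} (rather than the cruder bound of \citep{Needell2009}) is what makes the argument go through. Carrying it out requires the usual RIP toolkit — near-isometry of $Z$ on small supports, pseudoinverse/least-squares residual estimates of the form $\|(I - Z_T^{\dagger}Z_T)\,\cdot\,\|$, the identification-step bound on how much of the large energy is captured, and the fact that hard-thresholding to the best $s$-term approximation at most doubles the $\ell_2$ error — composed across the four CoSaMP steps, followed by checking the resulting composite constant is $<1$ at $\delta_{4s}=0.3843$. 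The geometric-series bookkeeping, the noise-splitting, and the ``for all $\bfg$'' quantifier are all routine once that invariant is in hand.
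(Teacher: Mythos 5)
Your proposal is correct and follows exactly the argument by which the cited result is established: the paper itself does not reprove this theorem but imports it from Foucart (2012), whose proof is precisely your reduction --- treat $Z(\bfg - [\bfg]_{(s)}) + \tfrac{1}{\delta}\boldsymbol{\mu} + \delta\boldsymbol{\nu}$ as additive noise on the exactly $s$-sparse target $[\bfg]_{(s)}$, apply the per-iteration contraction invariant valid under $\delta_{4s}(Z)\leq 0.3843$, sum the geometric series with $\tau = c/(1-\rho)$, and finish with the triangle inequality. Your handling of the ``for all $\bfg$'' quantifier (the randomness lives only in $Z$ via Theorem~\ref{thm:SatisfiesRIP}, after which everything is deterministic) is also the intended reading.
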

We emphasize this result is {\em universal}, {\em i.e.} it holds for all $\bfg \in \mathbb{R}^{d}$ with the stated probability. The exact values of $\rho$ and $\tau$ are provided in \citep{Foucart2012}. One can make $\tau$ smaller by making $m$ larger \citep{Foucart2012}. The constants $b_1, b_2$ are the same as in Theorem~\ref{thm:SatisfiesRIP}. Other initializations are possible; for example, we have found using $\bfg^{0} = \hat{\bfg}_{k-1}$ at the $k$-th iteration offers a modest speedup.

\begin{theorem}
Suppose $\bfg$ is compressible (Assumption~\ref{Assumption:Compressibility}) and, for any $s\in [d]$, $Z$ is chosen according to Theorem~\ref{thm:SatisfiesRIP}. Then $\|\bfg - [\bfg]_{(s)}\|_2 + \tau\|Z\left(\bfg - [\bfg]_{(s)}\right)\|_2 \leq \psi \|\bfg\|_2$, where:
\begin{equation*}
\psi = \underbrace{\left(\left(1 + \tau\sqrt{1 + \delta_{4s}(Z)}\right)\left(\frac{2}{p}-1\right)^{-1/2} + \tau\sqrt{1 + \delta_{4s}(Z)}\left(\frac{1}{p}-1\right)^{-1}\right)}_{= b_4} s^{1/2-1/p}.
\end{equation*}
\label{Lemma:Compressible_Tail_Bound}
\end{theorem}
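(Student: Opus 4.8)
The plan is to bound the two summands on the left-hand side separately and then collect terms. The first summand is immediate: $\bfg$ being compressible, the $\ell_2$ tail estimate \eqref{eq:Compressible_ell_2_bound} gives $\|\bfg - [\bfg]_{(s)}\|_2 \leq (2/p-1)^{-1/2}\|\bfg\|_2\, s^{1/2-1/p}$. All of the work is in the second summand $\|Z(\bfg - [\bfg]_{(s)})\|_2$.

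Since the tail $w := \bfg - [\bfg]_{(s)}$ is in general dense, one cannot apply the RIP to it directly, so I would use the standard block decomposition. Sort the coordinates of $\bfg$ by decreasing magnitude; let $T_0$ be the index set of the $s$ largest entries (the support of $[\bfg]_{(s)}$), and partition the remaining coordinates into consecutive blocks $T_1, T_2, \dots$ of size $s$ (the final block possibly smaller). Then $w = \sum_{j\geq 1}\bfg|_{T_j}$, each $\bfg|_{T_j}$ is $s$-sparse hence $4s$-sparse, and on the event of Theorem~\ref{thm:SatisfiesRIP} the RIP gives $\|Z\bfg|_{T_j}\|_2 \leq \sqrt{1+\delta_{4s}(Z)}\,\|\bfg|_{T_j}\|_2$ (using $\delta_s(Z)\leq\delta_{4s}(Z)$). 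By the triangle inequality, $\|Zw\|_2 \leq \sqrt{1+\delta_{4s}(Z)}\sum_{j\geq 1}\|\bfg|_{T_j}\|_2$.

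Next I would control $\sum_{j\geq 1}\|\bfg|_{T_j}\|_2$ by the usual "shifting" estimate: for $j\geq 2$ every coordinate in $T_j$ is dominated in magnitude by every coordinate of $T_{j-1}$, hence by the average $\|\bfg|_{T_{j-1}}\|_1/s$, so $\|\bfg|_{T_j}\|_2 \leq \sqrt{s}\,\|\bfg|_{T_j}\|_\infty \leq \|\bfg|_{T_{j-1}}\|_1/\sqrt{s}$. Summing over the disjoint blocks, $\sum_{j\geq 2}\|\bfg|_{T_j}\|_2 \leq \tfrac{1}{\sqrt{s}}\sum_{j\geq 1}\|\bfg|_{T_j}\|_1 = \|w\|_1/\sqrt{s}$, while $\|\bfg|_{T_1}\|_2 \leq \|w\|_2$. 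This yields $\|Zw\|_2 \leq \sqrt{1+\delta_{4s}(Z)}\bigl(\|\bfg - [\bfg]_{(s)}\|_2 + \|\bfg - [\bfg]_{(s)}\|_1/\sqrt{s}\bigr)$.

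Finally I would substitute the compressibility estimates: \eqref{eq:Compressible_ell_2_bound} for $\|\bfg - [\bfg]_{(s)}\|_2$ and \eqref{eq:Compressible_ell_1_bound} for $\|\bfg - [\bfg]_{(s)}\|_1$, the latter divided by $\sqrt{s}$ and using $s^{1-1/p}/\sqrt{s} = s^{1/2-1/p}$. Adding the bound on the first summand and factoring out $\|\bfg\|_2\, s^{1/2-1/p}$ produces exactly the coefficient
\[
\bigl(1+\tau\sqrt{1+\delta_{4s}(Z)}\bigr)\Bigl(\tfrac{2}{p}-1\Bigr)^{-1/2} + \tau\sqrt{1+\delta_{4s}(Z)}\Bigl(\tfrac{1}{p}-1\Bigr)^{-1} = C,
\]
which is the claimed $\psi = C s^{1/2-1/p}$. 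I do not anticipate a serious obstacle here; the only points needing a little care are the monotonicity $\delta_s(Z)\leq\delta_{4s}(Z)$, the bookkeeping for a possibly short final block, and checking that the block sums telescope against $\|w\|_1$ rather than $\|\bfg\|_1$.
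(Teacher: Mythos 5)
Your proof is correct and follows essentially the same route as the paper: both reduce the claim to the inequality $\|Zw\|_2 \leq \sqrt{1+\delta_{4s}(Z)}\bigl(\|w\|_2 + \|w\|_1/\sqrt{s}\bigr)$ for $w = \bfg - [\bfg]_{(s)}$ and then substitute the compressibility tail bounds \eqref{eq:Compressible_ell_1_bound}--\eqref{eq:Compressible_ell_2_bound}. The only difference is that the paper cites this energy bound directly from Needell and Tropp, whereas you re-derive it via the standard block decomposition and shifting argument --- which is exactly how it is proved in that reference.
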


\begin{proof}
From \citep{Needell2009} if $Z$ satisfies the $(4s)$-RIP then for any $\bfv\in\mathbb{R}^{d}$:
\begin{equation*}
\|Z\bfv\|_2 \leq \sqrt{1 + \delta_{4s}(Z)}\left(\|\bfv\|_2 + \frac{1}{\sqrt{s}}\|\bfv\|_1\right).
\end{equation*}
Combining this with \eqref{eq:Compressible_ell_1_bound} and \eqref{eq:Compressible_ell_2_bound}:
\begin{align*}
 &\quad~\|Z\left(\bfg - [\bfg]_{(s)}\right)\|_2 \\
 &\leq \sqrt{1 + \delta_{4s}(Z)}\left(\|\bfg - [\bfg]_{(s)}\|_2 + \frac{1}{\sqrt{s}}\|\bfg - [\bfg]_{(s)}\|_1\right) \\
    & \leq \sqrt{1 + \delta_{4s}(Z)}\left(\left(\frac{2}{p}-1\right)^{-1/2}\|\bfg\|_2s^{1/2-1/p} + \frac{1}{\sqrt{s}}\left(\frac{1}{p} - 1\right)^{-1}\|\bfg\|_2s^{1-1/p}\right) \\
    & = \sqrt{1 + \delta_{4s}(Z)}\left(\left(\frac{2}{p}-1\right)^{-1/2} + \left(\frac{1}{p}-1\right)^{-1}\right)s^{1/2-1/p}\|\bfg\|_2.
\end{align*}
Use \eqref{eq:Compressible_ell_2_bound} again to bound $\|\bfg - [\bfg]_{(s)}\|_2$ and add to obtain the lemma.
\end{proof}

We now bound the error terms in our measurements:
\begin{lemma} \label{lemma:Bound_chi_xi_vec}
$\|\boldsymbol{\mu}\|_{2} \leq 2\sigma$ and $\|\boldsymbol{\nu}\|_2 \leq H/2$.
\end{lemma}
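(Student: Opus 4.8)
The plan is to reduce the vector bounds to the componentwise bounds already established in Lemma~\ref{lm:query bound}. That lemma gives, for each $i = 1,\dots,m$, the pointwise estimates $|\mu_i| \leq 2\sigma/\sqrt{m}$ and $|\nu_i| \leq H/(2\sqrt{m})$, which hold under Assumptions~\ref{assumption:WeakSparsity} and~\ref{assumption:noise model}. Since $\boldsymbol{\mu} = [\mu_1,\dots,\mu_m]^\top$ and $\boldsymbol{\nu} = [\nu_1,\dots,\nu_m]^\top$, the Euclidean norm of each is simply the square root of a sum of $m$ squared entries.

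First I would write $\|\boldsymbol{\mu}\|_2^2 = \sum_{i=1}^m \mu_i^2 \leq \sum_{i=1}^m (2\sigma/\sqrt{m})^2 = m \cdot 4\sigma^2/m = 4\sigma^2$, and take square roots to conclude $\|\boldsymbol{\mu}\|_2 \leq 2\sigma$. Then I would do the identical computation for $\boldsymbol{\nu}$: $\|\boldsymbol{\nu}\|_2^2 = \sum_{i=1}^m \nu_i^2 \leq \sum_{i=1}^m \bigl(H/(2\sqrt{m})\bigr)^2 = m \cdot H^2/(4m) = H^2/4$, giving $\|\boldsymbol{\nu}\|_2 \leq H/2$. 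The $1/\sqrt{m}$ normalization baked into the definition of $y_i$ in~\eqref{eq:y_i} is exactly what makes these bounds dimension-free (independent of $m$), which is the whole point of this lemma: it feeds cleanly into the CoSaMP error bound~\eqref{eq:Break_into_Three}.

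There is essentially no obstacle here — this is a one-line consequence of Lemma~\ref{lm:query bound} plus the elementary fact that an $\ell_2$ norm of a vector with $m$ entries each bounded by $c/\sqrt{m}$ is at most $c$. The only thing worth double-checking is that the $\sqrt{m}$ factors cancel correctly, i.e. that the normalization in~\eqref{eq:y_i} was chosen precisely so that $m \cdot (c/\sqrt{m})^2 = c^2$; this is immediate. I would present the proof in two short displayed chains, one for each norm, citing Lemma~\ref{lm:query bound} for the entrywise bounds.
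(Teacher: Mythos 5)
Your proposal is correct and follows exactly the same route as the paper's proof: apply the entrywise bounds $|\mu_i|\leq 2\sigma/\sqrt{m}$ and $|\nu_i|\leq H/(2\sqrt{m})$ from Lemma~\ref{lm:query bound}, sum the squares so the factors of $m$ cancel, and take square roots. Nothing is missing.
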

 
 \begin{proof}
    From Lemma~\ref{lm:query bound},
    \begin{equation*}
        \|\boldsymbol{\mu}\|_2^2  = \sum_{i=1}^{m} \mu_i^2 \leq \sum_{i=1}^{m}\frac{4\sigma^2}{m}  = 4\sigma^2.
    \end{equation*}
    Similarly,
    \begin{equation*}
        \|\boldsymbol{\nu}\|_2^2  = \sum_{i=1}^{m} \nu_i^2 \leq \sum_{i=1}^{m}\frac{H^2}{4m} = \frac{H^2}{4}.
    \end{equation*}
 \end{proof}

Combining Theorem~\ref{Lemma:Compressible_Tail_Bound}, Lemma~\ref{lemma:Bound_chi_xi_vec} and \eqref{eq:Break_into_Three}; and using $\|[\bfg]_{(s)}\|_2 \leq \|\bfg\|_2$ yields:

\begin{theorem}  \label{thm:Grad_Estimate_Error}
    Suppose $f$ satisfies Assumptions~\ref{assumption:Lipschitz_Diff} and \ref{assumption:Sparsity}. Let $\hat{\bfg}_k$ be the output of Line 10 of Algorithm~\ref{algo:zoro}. Then the error bound
    \begin{equation}        \label{eq:ErrorBound2}
       \left\|\hat{\bfg}_k - \bfg_k\right\|_{2} \leq \left( \psi + \rho^{n}\right)\|\bfg_k\|_2 + \frac{2\tau\sigma}{\delta} + \frac{\tau\delta H}{2}
    \end{equation}
    holds for all $k$, with probability at least $1 - 2(s/d)^{b_2 s}$, where $\rho,\tau,b_1$ and $b_2$ are fixed numerical constants, and:
    \begin{equation*}
    \psi = 
    \begin{cases}
    0, & \textnormal{ if $f$ satisfies Assumption~\ref{Assumption:ExactSparsity} and $s \geq s_{\mathrm{exact}}$}
    \\
    b_4s^{1/2-1/p},    & \textnormal{ if $f$ satisfies Assumption~\ref{Assumption:Compressibility}, for any $s$}
    \end{cases}.
    \end{equation*} 
    Note that $b_4$ depends on $p$ and $\{z_i\}_{i=1}^{m}$, but not on $s$.
 \end{theorem}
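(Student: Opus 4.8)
The plan is to assemble the pieces already in hand. First I would invoke Theorem~\ref{thm:SatisfiesRIP}: since $m = b_1 s\log(d) \geq b_1 s\log(d/s)$ (note $\log(d/s)\le\log d$, so the larger choice of $m$ only strengthens the guarantee), the sensing matrix $Z$ satisfies the $(4s)$-RIP with $\delta_{4s}(Z)\le 0.3843$ on an event $\mathcal{E}$ of probability at least $1-(s/d)^{b_2 s}$. On $\mathcal{E}$, Theorem~\ref{thm:CoSaMPErrorBounds} applies, so the $n$-th CoSaMP iterate $\hat{\bfg}=\bfg^{n}$ obeys \eqref{eq:Break_into_Three} for \emph{every} target vector. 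The key observation for the ``for all $x$'' claim is that $\mathcal{E}$ is an event about the random matrix $Z$ alone and makes no reference to $x$; for a given $x$ we instantiate \eqref{eq:Break_into_Three} with $\bfg=\nabla f(x)$ and the measurement error vectors $\boldsymbol{\mu},\boldsymbol{\nu}$ of \eqref{eq:Expanded_Equation}, so the estimate holds simultaneously for all $x\in\mathbb{R}^{d}$ on $\mathcal{E}$ with no union bound.

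Next I would bound the five terms on the right-hand side of \eqref{eq:Break_into_Three}. The last term uses $\|[\bfg]_{(s)}\|_2\le\|\bfg\|_2$, contributing $\rho^{n}\|\bfg\|_2$. The third and fourth terms are controlled by Lemma~\ref{lemma:Bound_chi_xi_vec}: $\tfrac{\tau}{\delta}\|\boldsymbol{\mu}\|_2\le\tfrac{2\tau\sigma}{\delta}$ and $\tau\delta\|\boldsymbol{\nu}\|_2\le\tfrac{\tau\delta H}{2}$. For the first two terms (the ``tail'') I would split into the two cases of the statement. Under Assumption~\ref{Assumption:ExactSparsity} with $s\ge s_{\mathrm{exact}}$, $\bfg$ is itself $s$-sparse, so $\bfg-[\bfg]_{(s)}=\mathbf{0}$ and both terms vanish, giving $\psi=0$. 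Under Assumption~\ref{Assumption:Compressibility}, Theorem~\ref{Lemma:Compressible_Tail_Bound} bounds $\|\bfg-[\bfg]_{(s)}\|_2+\tau\|Z(\bfg-[\bfg]_{(s)})\|_2$ by $\psi\|\bfg\|_2$ with $\psi=Cs^{1/2-1/p}$, where $C=C(m,p)$ depends only on $p$ and, through $\delta_{4s}(Z)$, on the realized $\{z_i\}_{i=1}^{m}$ — not on $s$. Summing these four bounds yields exactly \eqref{eq:ErrorBound2}.

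I do not anticipate a genuine obstacle: the statement is essentially a repackaging of Theorems~\ref{thm:CoSaMPErrorBounds} and \ref{Lemma:Compressible_Tail_Bound} together with Lemma~\ref{lemma:Bound_chi_xi_vec}. The only point requiring a careful sentence is the uniformity in $x$: one must note that the high-probability event is a property of $Z$, and that the per-$x$ quantities $\|\boldsymbol{\mu}\|_2$ and $\|\boldsymbol{\nu}\|_2$ enjoy the bounds of Lemma~\ref{lemma:Bound_chi_xi_vec} for \emph{every} $x$ (these follow from Assumptions~\ref{assumption:WeakSparsity} and \ref{assumption:noise model} via Lemma~\ref{lm:query bound}), so the final inequality is valid for all $x\in\mathbb{R}^{d}$ on a single event of probability $1-(s/d)^{b_2 s}$.
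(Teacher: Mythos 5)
Your proposal is correct and follows essentially the same route as the paper, which obtains the theorem by combining Theorem~\ref{thm:SatisfiesRIP}, the CoSaMP bound \eqref{eq:Break_into_Three}, Theorem~\ref{Lemma:Compressible_Tail_Bound}, Lemma~\ref{lemma:Bound_chi_xi_vec}, and $\|[\bfg]_{(s)}\|_2 \leq \|\bfg\|_2$. Your explicit remark that the high-probability event concerns $Z$ alone, so the bound holds uniformly in $x$ without a union bound, is a welcome clarification the paper leaves implicit.
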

 
If $\sigma = 0$, {\em i.e.} the oracle is noise-free, the second term on the right-hand side of \eqref{eq:ErrorBound2} drops out and one can make the third term arbitrarily small by choosing the sampling radius $\delta$ sufficiently small. If $\sigma >0$ then there is a lower bound to how small we can make the right-hand side of \eqref{eq:ErrorBound2}:

\begin{corollary}
\label{cor:ErrorBound3}
 Suppose $\sigma >0$ and that the other assumptions are as in Theorem~\ref{thm:Grad_Estimate_Error}. Choosing $\delta = 2\sqrt{\sigma/H}$ provides the tightest possible error bound of:
 \begin{equation*}
 \left\|\hat{\bfg} - \bfg\right\|_{2} \leq \left( \psi + \rho^{n}\right)\|\bfg\|_2 +2\tau\sqrt{\sigma H}.
 \end{equation*}
\end{corollary}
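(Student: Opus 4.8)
\textbf{Proof proposal for Corollary~\ref{cor:ErrorBound3}.} The plan is simply to optimize the bound of Theorem~\ref{thm:Grad_Estimate_Error} over the free query radius $\delta>0$. In \eqref{eq:ErrorBound2} the term $(\psi+\rho^{n})\|\bfg\|_2$ does not involve $\delta$, so minimizing the right-hand side amounts to minimizing
\[
\phi(\delta) \;:=\; \frac{2\tau\sigma}{\delta} + \frac{\tau\delta H}{2}
\]
over $\delta\in(0,\infty)$. This is where the hypothesis $\sigma>0$ is used: if $\sigma=0$ the first term vanishes and $\phi$ has no positive minimizer (one just sends $\delta\to0$), whereas for $\sigma>0$ both summands are strictly positive.

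First I would apply the AM--GM inequality to the two positive summands of $\phi$: for every $\delta>0$,
\[
\phi(\delta) \;\ge\; 2\sqrt{\frac{2\tau\sigma}{\delta}\cdot\frac{\tau\delta H}{2}} \;=\; 2\tau\sqrt{\sigma H},
\]
with equality if and only if $\frac{2\tau\sigma}{\delta} = \frac{\tau\delta H}{2}$, i.e. $\delta^{2} = 4\sigma/H$, i.e. $\delta = 2\sqrt{\sigma/H}$. (Equivalently one could differentiate: $\phi'(\delta) = -2\tau\sigma/\delta^{2} + \tau H/2$ vanishes exactly at $\delta = 2\sqrt{\sigma/H}$, where $\phi''>0$, so this is the global minimizer on $(0,\infty)$.) This simultaneously identifies the optimal radius and shows it yields the \emph{tightest} possible version of \eqref{eq:ErrorBound2}, as claimed. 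Substituting $\delta = 2\sqrt{\sigma/H}$ back in gives $\frac{2\tau\sigma}{2\sqrt{\sigma/H}} = \tau\sqrt{\sigma H}$ and $\frac{\tau H}{2}\cdot 2\sqrt{\sigma/H} = \tau\sqrt{\sigma H}$, hence $\phi(2\sqrt{\sigma/H}) = 2\tau\sqrt{\sigma H}$, and plugging into \eqref{eq:ErrorBound2} produces the stated bound.

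I do not expect any real obstacle here; the only point worth a sentence is that the high-probability event in Theorem~\ref{thm:Grad_Estimate_Error} (namely that $Z$ has the $(4s)$-RIP, via Theorem~\ref{thm:SatisfiesRIP}) is entirely independent of $\delta$, as are the deterministic error-vector bounds of Lemma~\ref{lm:query bound} and Lemma~\ref{lemma:Bound_chi_xi_vec}. Consequently the bound \eqref{eq:ErrorBound2} holds for the particular choice $\delta = 2\sqrt{\sigma/H}$ with the same probability $1-(s/d)^{b_2 s}$, and the corollary follows.
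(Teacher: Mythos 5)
Your proposal is correct and matches the paper's proof exactly: the paper also obtains the corollary by minimizing $\frac{2\tau\sigma}{\delta} + \frac{\tau\delta H}{2}$ over $\delta$, which yields $\delta = 2\sqrt{\sigma/H}$ and the value $2\tau\sqrt{\sigma H}$. Your additional remark that the high-probability event and the error-vector bounds are independent of $\delta$ is a harmless (and correct) elaboration that the paper leaves implicit.
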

\begin{proof}
This follows by minimizing $\frac{2\tau\sigma}{\delta} + \frac{\tau\delta H}{2}$ with respect to $\delta$. 
\end{proof}

\section{Gradient descent with relative and absolute errors}
\label{sec:GradDescent}
We consider the problem of minimizing $f$ using inexact gradient descent: $x_{k+1} = x_{k} -\alpha\hat{\bfg}_{k}$, where
\begin{equation}
\|\bfg_{k} - \hat{\bfg}_{k}\|_2^2 \leq \varepsilon_{\mathrm{rel}}\|\bfg_k\|_2^2 + \varepsilon_{\mathrm{abs}}.
\label{eq:Abs_Rel_Error}
\end{equation}
\begin{lemma}
\label{thm:Rel_Error_GD}
Suppose $f$ is convex and satisfies Assumptions~\ref{assumption:Lipschitz_Diff}, \ref{assumption:f_coercive} and \ref{assumption:grad_coercive}. Suppose for all $k$ $\hat{\bfg}_k$ satisfies \eqref{eq:Abs_Rel_Error} with $\varepsilon_{\mathrm{rel}} < 1$. Choose $\alpha = \frac{1}{L}$. Then:
\begin{equation*}
e_{k} \leq \max\left\{\frac{4LR^{2}e_0}{(1-\varepsilon_{\mathrm{rel}})e_0k + 4LR^{2}}, R\sqrt{\frac{ 2\varepsilon_{\mathrm{abs}}}{1-\varepsilon_{\mathrm{rel}}}}\right\}.
\end{equation*}
where $R$ is a constant satisfying $\|x_k - \proj_{\star}(x_k)\|_2 \leq R$ for all $k$ determined by the coercivity conditions (\ref{assumption:f_coercive} and \ref{assumption:grad_coercive}). If instead of Assumptions~\ref{assumption:f_coercive} and \ref{assumption:grad_coercive} we assume $f$ satisfies Assumption~\ref{def:res_convex} then this rate improves to:
\begin{equation*}
e_{k} \leq \left(1 - \frac{(1-\varepsilon_{\mathrm{rel}})\nu}{4L}\right)^{k} e_{0} + \frac{2\varepsilon_{\mathrm{abs}}}{\nu(1 - \varepsilon_{\mathrm{rel}})}.
\end{equation*}
\end{lemma}

Various forms of this result are well-known (\citep{berahas2021global,friedlander2012hybrid} and \citep[Section~1.2]{bertsekas1997nonlinear}), but we were unable to find a precise statement in the literature allowing for non-decreasing $\varepsilon_{\mathrm{abs}}$ and $\varepsilon_{\mathrm{rel}}$ or restricted strongly convex $f$. Hence, we provide a proof in Section~\ref{sec:Proving_Rel_Error_GD}.

\subsection{Deducing Theorem~\ref{thm:Main_compressible} from Lemma~\ref{thm:Rel_Error_GD}}
Before proving Lemma~\ref{thm:Rel_Error_GD} let us explain how Theorem~\ref{thm:Main_compressible} will follow from it. Squaring Corollary~\ref{cor:ErrorBound3}:
\begin{equation*}
\left\|\bfg_{k} - \hat{\bfg}_{k}\right\|_{2}^{2} \leq 2\left( \psi + \rho^{n}\right)^{2}\|\bfg_{k}\|_2^2 +4\tau^{2}\sigma H \quad \text{ for all $k$}
\end{equation*}
with probability $1 - 2(s/d)^{b_2s}$. Choose $n$, the number of iterations of CoSaMP performed, large enough so $\rho^{n} < \psi$, in which case $\varepsilon_{\mathrm{rel}} := 2\left( \psi + \rho^{n}\right)^{2} = 8\psi^{2} < 1$ as long as $\psi< 0.35$. Note that $\varepsilon_{\mathrm{abs}} := 4\tau^{2}\sigma H$. For any $\varepsilon > R\sqrt{2\varepsilon_{\mathrm{abs}}/(1-\varepsilon_{\mathrm{rel}})} =: b_3R\sqrt{2\sigma H/(1-8\psi^{2})}$ we may solve for $k$ guaranteeing $e_{k} \leq \varepsilon$:
\begin{equation*}
k \geq \frac{4LR^2}{\varepsilon(1- 8\psi^{2})} \geq \frac{4LR^2}{\varepsilon(1- 8\psi^{2})}  - \frac{4LR^2}{e_0(1-8\psi^{2})}.
\end{equation*}
Recall ZORO makes $b_1s\log(d)$ queries per iteration. Multiplying this number by the number of required iterations ({\em i.e.} $k$) yields the first result. For strongly convex $f$ by the same line of reasoning for any $\varepsilon > \frac{2\varepsilon_{\mathrm{abs}}}{\nu(1-\varepsilon_{\mathrm{rel}})} = \frac{2b_3^2\sigma H}{\nu(1-8\psi^2)}$ we may guarantee $e_k \leq \varepsilon$ as long as:
\begin{equation*}
k \geq \frac{\log\left(\frac{\varepsilon}{e_0} - \frac{2b_3^2\sigma H}{\nu e_0(1-8\psi^{2})}\right)}{\log\left(1 - \frac{(1-8\psi^{2})\nu}{4L}\right)}.
\end{equation*}
Again, multiplying by the number of queries per iteration yields the result. 

\subsection{Proof of Lemma~\ref{thm:Rel_Error_GD}}
\label{sec:Proving_Rel_Error_GD}
First, we need two lemmas:

\begin{lemma}[Sequence analysis I]\label{prop:seq}
Consider a sequence $\{e_k\}_{k=0}^{\infty}$ with $e_k\ge 0$ and $e_{k+1}\le e_k - c e_k^2+d$ for all $k$, where $c >0 $ and $d\geq 0$. If $d > 0$ we have
\begin{align*}
e_k \le \frac{2e_0}{ce_0k +2},\quad k\in \{t:e_{0},\dots,e_{t+1} \ge \sqrt{2d/c}\},
\end{align*}
while if $d = 0$ we have $e_{k} \leq \frac{e_0}{ce_0k+1}$ for all $k$. 
\end{lemma}
\begin{proof}
If $e_{k} \ge \sqrt{2d/c}$, then $e_{k+1}\le e_k - d$, so $\frac{e_k}{e_{k+1}}\ge 1$. Dividing the condition by $e_{k+1}e_k$ and reorganizing yields 
\begin{align*}
   \frac{1}{e_{k+1}}-\frac{1}{e_{k}} & \ge \frac{ce_k}{e_{k+1}}-\frac{d}{e_{k+1}e_k} \ge  
\begin{cases}
c - \frac{d}{2d/c}=\frac{1}{2}c,& d\neq 0\\
c,& d=0
\end{cases} .
\end{align*}
Summing, we obtain $\frac{1}{e_{k}}\ge \frac{1}{e_0}+ \frac{1}{2}kc$ when $d\neq 0$ and $\frac{1}{e_k} \geq \frac{1}{e_0}+ kc$ when $d=0$. Inverting both sides yields the claim.
\end{proof}

\begin{lemma}[Sequence analysis II]
\label{prop:seq4}
Consider a sequence $\{e_k\}_{k=0}^{\infty}$ with $e_{k} \geq 0$ and $e_{k+1} \leq (1-c)e_k + d$ for all $k$, where $c \in (0,2)$ and $d \geq 0$. Then $e_{k+1} \leq \left(1-c\right)^{k+1}e_0 + \frac{d}{c}$.
\end{lemma}
\begin{proof}
Applying the condition recursively, we get:
\begin{equation*}
e_{k+1} \leq (1-c)^{k+1}e_0 + \sum_{i=0}^{k+1}(1-c)^{i}d \leq (1-c)^{k+1}e_0 + \frac{d}{c}.
\end{equation*}
Thus proving the claim.
\end{proof}
We now prove the main result of this section:
\begin{proof}[Proof of Lemma~\ref{thm:Rel_Error_GD}]
From Lemma 2.1 in \citep{friedlander2012hybrid},
\begin{equation*}
f(x_{k+1}) \leq f(x_{k}) - \frac{1}{2L}\|\bfg_k\|_{2}^{2} + \frac{1}{2L}\|\hat{\bfg}_k - \bfg_k\|_{2}^{2},
\end{equation*}
and so:
\begin{align}
    & e_{k+1} \leq e_{k} - \frac{1}{2L}\|\bfg_k\|_{2}^{2} + \frac{\varepsilon_{\mathrm{rel}}}{2L}\|\bfg_k\|_{2}^{2} + \frac{\varepsilon_{\mathrm{abs}}}{2L} \nonumber \\
\Rightarrow~& e_{k+1} \leq e_{k} - \frac{1}{2L}\left(1 - \varepsilon_{\mathrm{rel}}\right)\|\bfg_k\|_{2}^{2} + \frac{\varepsilon_{\mathrm{abs}}}{2L}. \label{eq:Haul}
\end{align}
From convexity:
\begin{equation}
    e_{k} := f(x_{k}) - f^{\star} \leq \langle \bfg_{k}, x_{k} - \proj_{\star}(x_{k})\rangle~\Rightarrow~ \frac{e_{k}^{2}}{\|x_{k} - \proj_{\star}(x_{k})\|_{2}^{2}} \leq \|\bfg_{k}\|_{2}^{2}.
    \label{eq:Use_Convexity}
\end{equation}
Combining \eqref{eq:Haul} and \eqref{eq:Use_Convexity}:
\begin{equation}
e_{k+1} \leq e_{k} - \frac{1 - \varepsilon_{\mathrm{rel}}}{2L\|x_{k} - \proj_{\star}(x_{k})\|_{2}^{2}}e_{k}^{2} +  \frac{\varepsilon_{\mathrm{abs}}}{2L}.
\label{eq:appeal_to_this}
\end{equation}
To prove the first claim, use Proposition~\ref{prop:Boundedness_1} to get $\|x_{k} - \proj_{\star}(x_{k})\|_{2}^{2} \leq R^{2}$. Thus:
\begin{align*}
e_{k+1} \leq e_{k} - \frac{1 - \varepsilon_{\mathrm{rel}}}{2LR^2}e_{k}^{2} +  \frac{\varepsilon_{\mathrm{abs}}}{2L}   .
\end{align*}
Appealing to Lemma~\ref{prop:seq} completes the proof. For the second claim, by Assumption~\ref{def:res_convex} $e_{k} \geq \frac{\nu}{2} \|x_{k} - \proj_{\star}(x_{k})\|_{2}^{2}$. Substituting this into \eqref{eq:appeal_to_this}:
\begin{equation*}
e_{k+1} \leq e_{k} - \frac{(1 -\varepsilon_{\mathrm{rel}})\nu}{4L}e_k + \frac{\varepsilon_{\mathrm{abs}}}{2L}
\end{equation*}
and now using Lemma~\ref{prop:seq4} yields the claimed convergence rate. 
\end{proof}

\section{Prox-gradient descent using inexact gradients}
\label{sec:ProxGrad}
Here, we consider minimizing $F = f + r$ using prox-gradient descent (also known as forward-backward splitting): $x_{k+1} = \text{prox}_{\alpha r}\left(x_{k} -\alpha\hat{\bfg}_{k}\right)$. Recall the proximal operator is defined as:
\begin{equation*}
    \text{prox}_{\alpha r}(x) = \argmin_{y\in\mathbb{R}^d}  \frac{1}{2}\|x - y \|_{2}^{2} + \alpha r(y).
\end{equation*}
When exact gradients are available and $\text{prox}_{\alpha r}$ is exactly computable, prox-gradient descent is known to converge at the same rate as gradient descent \citep{ryu2020large}. This is particularly useful when $r$ is non-smooth. We consider the situation where one can compute $\text{prox}_{\alpha r}$ exactly but one only has access to inexact gradients of $f$ satisfying $\|\bfg_{k} - \hat{\bfg}_{k}\|_2^2 \leq \varepsilon_{\mathrm{abs}}$. In this section $e_{k} := F(x_k) - F^{\star}$.

\begin{theorem}
\label{thm:Convergence_Abs_Error_Reg}
Suppose $f$ is convex and satisfies Assumption~\ref{assumption:Lipschitz_Diff}. Suppose $r$ is convex and $F = f + r$ satisfies Assumptions~\ref{assumption:f_coercive} and \ref{assumption:subgrad_coercive}. Suppose $\|\bfg_{k} - \hat{\bfg}_{k}\|_2^2 \leq \varepsilon_{\mathrm{abs}}$ for all $k$ while $\textup{prox}_{\alpha r}(x)$ can be computed precisely. Choose $\alpha = \frac{1}{L}$. Then:
\begin{equation*}
e_{k} \leq \max\left\{\frac{36LR^{2}e_0}{e_0(k-t) + 36LR^2}, R\sqrt{10\varepsilon_{\mathrm{abs}}}\right\} \quad \text{ for } k \geq t,
\end{equation*}
where $\displaystyle t = \left\lceil\frac{\log(2e_0/3)}{\log(3/2)}\right\rceil$ and $R$ is a constant satisfying $\|x_k - \proj_{\star}(x_k)\|_2 \leq R$ for all $k$ determined by the coercivity conditions (\ref{assumption:f_coercive} and \ref{assumption:subgrad_coercive}). If instead of Assumption~\ref{assumption:Coercivity} $F$ satisfies Assumption~\ref{def:res_convex} then:
\begin{equation*}
e_{k} \leq \left(\frac{24L}{\nu + 24L}\right)^{k}e_0 + \frac{20\varepsilon_{\mathrm{abs}}}{\nu}.
\end{equation*}
\end{theorem}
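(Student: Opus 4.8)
The plan is to follow the same three-move template used for Theorem~\ref{thm:Rel_Error_GD}: (i) a one-step sufficient-decrease inequality, (ii) conversion into a scalar recursion for $e_k := F(x_k)-\min F$ using convexity together with a uniform bound $\|x_k-\proj_*(x_k)\|_2\le R$ on the iterates, and (iii) application of the sequence lemmas (Lemma~\ref{prop:seq} for the general convex case, Lemma~\ref{prop:seq4} for the restricted-strongly-convex case). The new feature is that the proximal step forces us to measure progress through the inexact gradient mapping $\tilde G_k := \tfrac1\alpha(x_k-x_{k+1})$, and the only subgradient of $F$ we can control lives at $x_{k+1}$ rather than $x_k$; this replaces the $e_k^2$ term of Theorem~\ref{thm:Rel_Error_GD} by an $e_{k+1}^2$ term, which is where the burn-in length $t$ comes from. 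For step (i) I would use that $x_{k+1}$ minimizes the $\tfrac1\alpha$-strongly convex map $y\mapsto r(y)+\langle\hat\bfg_k,y\rangle+\tfrac1{2\alpha}\|y-x_k\|_2^2$, combine this with the $L$-smoothness of $f$ (with $\alpha=1/L$) and the convexity of $f$, to obtain the three-point inequality
\[ F(x_{k+1}) \le F(y) + \langle \hat\bfg_k-\bfg_k,\, y-x_{k+1}\rangle + \tfrac1{2\alpha}\|y-x_k\|_2^2 - \tfrac1{2\alpha}\|y-x_{k+1}\|_2^2 \qquad\forall y. \]
Taking $y=x_k$, bounding $\langle\hat\bfg_k-\bfg_k,\,x_k-x_{k+1}\rangle$ by Young's inequality and using $\|\bfg_k-\hat\bfg_k\|_2^2\le\varepsilon_{\mathrm{abs}}$ gives $e_{k+1}\le e_k-\tfrac1{4L}\|\tilde G_k\|_2^2+\tfrac{\varepsilon_{\mathrm{abs}}}{L}$.

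For step (ii), the optimality condition of the prox step reads $\tilde G_k-\hat\bfg_k\in\partial r(x_{k+1})$, so $w_{k+1}:=\nabla f(x_{k+1})+\tilde G_k-\hat\bfg_k\in\partial F(x_{k+1})$, and by $L$-smoothness $\|w_{k+1}\|_2\le L\|x_{k+1}-x_k\|_2+\|\bfg_k-\hat\bfg_k\|_2+\|\tilde G_k\|_2 = 2\|\tilde G_k\|_2+\|\bfg_k-\hat\bfg_k\|_2$. Convexity of $F$ then gives $e_{k+1}\le\langle w_{k+1},\,x_{k+1}-\proj_*(x_{k+1})\rangle\le\|w_{k+1}\|_2\,\|x_{k+1}-\proj_*(x_{k+1})\|_2$. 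Here I would invoke the boundedness analysis of Section~\ref{sec:Boundedness} — this is exactly where Assumptions~\ref{assumption:f_coercive} and \ref{assumption:subgrad_coercive} enter — to get $\|x_{k+1}-\proj_*(x_{k+1})\|_2\le R$. Chaining these estimates lower-bounds $\|\tilde G_k\|_2^2$ by a multiple of $e_{k+1}^2/R^2$ minus an $O(\varepsilon_{\mathrm{abs}})$ term; substituting into the decrease inequality yields a recursion of the shape $e_{k+1}\le e_k-\tfrac{c}{LR^2}e_{k+1}^2+\tfrac{c'\varepsilon_{\mathrm{abs}}}{L}$.

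The last step, for the general convex case, is the main obstacle, precisely because the quadratic term is $e_{k+1}^2$ and not $e_k^2$, so Lemma~\ref{prop:seq} does not apply verbatim. I would split the run at the threshold $e_{k+1}\ge\tfrac23 e_k$: on such ``mature'' steps $e_{k+1}^2\ge\tfrac49 e_k^2$, turning the recursion into $e_{k+1}\le e_k-\tfrac{4c}{9LR^2}e_k^2+\tfrac{c'\varepsilon_{\mathrm{abs}}}{L}$, to which Lemma~\ref{prop:seq} applies and gives the $O(1/k)$ decay down to the floor $R\sqrt{10\varepsilon_{\mathrm{abs}}}$ (which is the $\sqrt{2d/c}$ of that lemma with the above $c,d$); on the complementary ``fast'' steps $e$ contracts by a factor strictly below $\tfrac23$, and since $e_k$ is non-increasing while above the floor there can be at most $t=\lceil\log(2e_0/3)/\log(3/2)\rceil$ of them before the mature regime is entered once and for all. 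Running Lemma~\ref{prop:seq} from the first mature index $k_0\le t$, using that $e\mapsto\tfrac{e}{ce(k-k_0)+2}$ is increasing in the starting value together with $e_{k_0}\le e_0$, and taking a maximum with the error floor, delivers the claimed bound for $k\ge t$. The remaining work — tracking the absolute constants $12,24,36,10,20$ by optimizing the Young splits and the expansion of $(2\|\tilde G_k\|_2+\|\bfg_k-\hat\bfg_k\|_2)^2$ — is routine but tedious.

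For the restricted-strongly-convex case no phases are needed. Assumption~\ref{def:res_convex} gives $\|x_{k+1}-\proj_*(x_{k+1})\|_2^2\le\tfrac2\nu e_{k+1}$, so $e_{k+1}\le\|w_{k+1}\|_2\|x_{k+1}-\proj_*(x_{k+1})\|_2$ yields $\|w_{k+1}\|_2^2\ge\tfrac\nu2 e_{k+1}$, whence $\|\tilde G_k\|_2^2\gtrsim\nu e_{k+1}-\varepsilon_{\mathrm{abs}}$. Feeding this into the decrease inequality gives $e_{k+1}\le e_k-\tfrac{c_2\nu}{L}e_{k+1}+\tfrac{c_3\varepsilon_{\mathrm{abs}}}{L}$; collecting the $e_{k+1}$ terms on the left gives $e_{k+1}\le\tfrac{L}{L+c_2\nu}e_k+\tfrac{c_3\varepsilon_{\mathrm{abs}}}{L+c_2\nu}$, and Lemma~\ref{prop:seq4} then yields $e_k\le\big(\tfrac{L}{L+c_2\nu}\big)^k e_0+\tfrac{c_3\varepsilon_{\mathrm{abs}}}{c_2\nu}$, which is the stated linear rate once $c_2=1/24$ (so that $\tfrac{L}{L+c_2\nu}=\tfrac{24L}{\nu+24L}$) and $c_3/c_2=20$ are recovered from the same constant tracking. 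Note that this branch uses neither Section~\ref{sec:Boundedness} nor the coercivity assumptions, consistent with the hypotheses of the theorem.
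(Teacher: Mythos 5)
Your plan follows the paper's proof essentially step for step: your one-step inequality is the paper's Descent Lemma (Lemma~\ref{lemma:boundB3}, in particular \eqref{eq:bnd1} and \eqref{eq:bnd1pp}); your $w_{k+1}=\nabla f(x_{k+1})+\tnabla r(x_{k+1})$ is exactly the paper's stationarity vector $-\tilde\Delta_k$, and your bound $\|w_{k+1}\|_2\le 2\|\tilde G_k\|_2+\|\bfg_k-\hat\bfg_k\|_2$ is the same triangle-inequality step the paper uses to pass from \eqref{eq:bnd1}--\eqref{eq:bnd1p} to \eqref{eq:bnd1pp}; the convexity/Cauchy--Schwarz step is Lemma~\ref{lm:eineq}; boundedness comes from Proposition~\ref{prop:Boundedness_2}; and the restricted-strongly-convex branch, including the observation that it needs no coercivity, matches the paper exactly via Proposition~\ref{prop:seq2}. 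The three-point-inequality route to the descent estimate is a cosmetic difference only, and the constant bookkeeping you defer is indeed routine.

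There is, however, one genuine flaw in your step (iii). You claim that because each ``fast'' step contracts $e$ by a factor below $2/3$, there can be at most $t=\lceil\log(2e_0/3)/\log(3/2)\rceil$ of them, ``before the mature regime is entered once and for all,'' and you then run Lemma~\ref{prop:seq} from the first mature index. Neither assertion holds: fast and mature steps can interleave indefinitely (nothing prevents a fast step from occurring after many mature ones), and the total number of fast steps above the error floor is controlled by $\log(e_0/\text{floor})/\log(3/2)$, which can exceed $t$ when $\varepsilon_{\mathrm{abs}}$ is small. Moreover Lemma~\ref{prop:seq} requires its recursion on a contiguous range of indices, which the interleaving destroys. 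The paper's Proposition~\ref{prop:seq1} resolves this differently: it works with the telescoping of $1/e_k$ and shows that after at most $t$ fast steps $e_k$ has dropped below $3/(2c)$, whence \emph{every} subsequent step --- fast or mature --- increases $1/e_k$ by at least $c/3$; the fast steps never need to stop. Your partition at the ratio $2/3$ and your value of $t$ are the right ingredients, but the accounting must be done on $1/e_k$ as in Proposition~\ref{prop:seq1}, not by asserting that the fast phase terminates.
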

In simpler terms, this theorem gives $\cO(1/k)$ convergence to an error horizon proportional to $R\sqrt{\varepsilon_{\mathrm{abs}}}$. If $F(x)$ is $\nu$-restricted strongly convex then we get linear convergence to an error horizon proportional to $\varepsilon_{\mathrm{abs}}/\nu$. \citep{schmidt2011convergence} proves a similar rate, without error horizon, for the case where $\|\bfg_{k} - \hat{\bfg}_{k}\|_2^2 \leq \varepsilon_{\mathrm{abs}}^{(k)}$ with the sequence $\varepsilon_{\mathrm{abs}}^{(k)}$ summable. 

\subsection{Deducing Theorem~\ref{thm:Main_regularized_sparse} from Theorem~\ref{thm:Convergence_Abs_Error_Reg}}
\label{section:DeducingMainTheorem}
Let us again explain how one can deduce the query complexity for ZORO (Theorem~\ref{thm:Main_regularized_sparse}) from Theorem~\ref{thm:Convergence_Abs_Error_Reg}. If $f$ satisfies Assumption~\ref{Assumption:ExactSparsity} then from Corollary~\ref{cor:ErrorBound3}:
\begin{equation*}
\|\hat{\bfg}_k - \bfg_k\|_2^2 \leq 9\tau^{2}\sigma H =: \varepsilon_{\mathrm{abs}}
\end{equation*}
for all $k$ with probability $1 - 2(s/d)^{b_2s}$, choosing $n$ sufficiently large so $\rho^{n}\|\bfg_{k}\|_2 \leq \tau\sqrt{\sigma H}$. For any $\varepsilon \geq R\sqrt{10\varepsilon_{\mathrm{abs}}} = R\sqrt{90\tau^{2}\sigma H} := b_5R\sqrt{\sigma H}$ we solve for $k$ to guarantee $e_{k} \leq \varepsilon$:
\begin{equation*}
k \geq \frac{36LR^{2}}{\varepsilon} + \log(e_0) \geq \frac{36LR^{2}}{\varepsilon} + t \geq  \frac{36LR^{2}}{\varepsilon} + t - \frac{36LR^{2}}{e_0}.
\end{equation*}
Multiplying this number by $b_1s\log(d)$, as in the proof of Theorem~\ref{thm:Main_compressible}, yields the first result. If $F$ is restricted strongly convex, then by the same line of reasoning for any $\varepsilon > 20\varepsilon_{\mathrm{abs}}/\nu = b_{5}^{2}\sigma H/\nu$, we get from Theorem~\ref{thm:Convergence_Abs_Error_Reg} that $e_{k} \leq \varepsilon$ for:
\begin{equation*}
k \geq \log\left(\frac{\varepsilon\nu - b_{5}^{2}\sigma H}{\nu e_0}\right)\bigg/\log\left(\frac{24L}{\nu + 24L}\right).
\end{equation*}
Multiplying by the number of queries per iteration yields the claimed query complexity. 

\subsection{Proof of Theorem~\ref{thm:Convergence_Abs_Error_Reg}}
\label{sec:ProofThm4.1}
Before proceeding we quantify, under very general conditions, the expected decrease per iteration. From the first order optimality condition of $\textup{prox}$: $x_{k+1} - x_{k} = -\alpha(\tnabla r(x_{k+1})+ \hat{\bfg}_{k})$,
where $\tnabla r(x_{k+1})\in\partial r(x_{k+1})$. It will be convenient to introduce the following notation:
\begin{align*}
    \text{Actual direction:}&&\hat{\Delta}_k &:= -(\tnabla r(x_{k+1})+ \hat{\bfg}_{k}),\\
    \text{Ideal direction:} && \Delta_k &:= -(\tnabla r(x_{k+1})+ {\bfg}_{k}),\\
    \text{Stationarity:} &&\tilde\Delta_k &:= -(\tnabla r(x_{k+1})+ {\bfg}_{k+1}).
\end{align*}

\begin{lemma}[Descent Lemma]
\label{lemma:boundB3}
Assume $f$ and $r$ are convex, $f$ satisfies Assumption~\ref{assumption:Lipschitz_Diff} and $\alpha = \frac{1}{L}$. Then:
\begin{align}
F(x_{k+1}) &\leq F(x_k)  -\frac{1}{4L} \|\hat{\Delta}_k\|_2^2 + \frac{1}{L}\|\bfg_k - \hat{\bfg}_k\|_2^2 \label{eq:bnd1},\\
F(x_{k+1}) &\leq F(x_k)  - \frac{1}{2L}\|\Delta_k\|_2^2 + \frac{1}{2L}\|\bfg_k - \hat{\bfg}_k\|_2^2 \label{eq:bnd1p},\\
F(x_{k+1}) &\leq F(x_k)  -\frac{1}{12L}\|\tilde{\Delta}_k\|_2^2 + \frac{5}{6L}\|\bfg_k - \hat{\bfg}_k\|_2^2. \label{eq:bnd1pp}
\end{align}
\end{lemma}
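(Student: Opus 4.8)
The plan is to prove a single ``master'' one-step inequality for $F$ and then repackage its right-hand side in three ways -- in terms of $\hat{\Delta}_k$, $\Delta_k$, and $\tilde{\Delta}_k$ -- using elementary weighted Young's inequalities. First I would record what the prox step gives: with $\alpha = 1/L$, the first-order optimality condition for $\textup{prox}_{\alpha r}$ produces some $\tnabla r(x_{k+1})\in\partial r(x_{k+1})$ with $x_{k+1}-x_k = \frac{1}{L}\hat{\Delta}_k$ and $\hat{\Delta}_k = -(\tnabla r(x_{k+1})+\hat{\bfg}_k)$. From the definitions of the three ``directions'' one reads off the identities $\hat{\Delta}_k-\Delta_k = \bfg_k-\hat{\bfg}_k$ and $\Delta_k-\tilde{\Delta}_k = \bfg_{k+1}-\bfg_k$, and Assumption~\ref{assumption:Lipschitz_Diff} gives $\|\bfg_{k+1}-\bfg_k\|_2 \le L\|x_{k+1}-x_k\|_2 = \|\hat{\Delta}_k\|_2$. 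These facts, together with convexity of $r$ and $L$-smoothness of $f$, are all that is needed.

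Next I would assemble the master inequality. The descent lemma for $L$-smooth $f$ gives $f(x_{k+1}) \le f(x_k)+\langle\bfg_k,\,x_{k+1}-x_k\rangle+\frac L2\|x_{k+1}-x_k\|_2^2$, while convexity of $r$ with $\tnabla r(x_{k+1})\in\partial r(x_{k+1})$ gives $r(x_{k+1})\le r(x_k)+\langle\tnabla r(x_{k+1}),\,x_{k+1}-x_k\rangle$. Adding and substituting $x_{k+1}-x_k=\frac1L\hat{\Delta}_k$ yields
\begin{equation*}
F(x_{k+1}) \le F(x_k) + \frac{1}{L}\langle \bfg_k+\tnabla r(x_{k+1}),\,\hat{\Delta}_k\rangle + \frac{1}{2L}\|\hat{\Delta}_k\|_2^2 .
\end{equation*}
For \eqref{eq:bnd1} I would write $\bfg_k+\tnabla r(x_{k+1}) = (\bfg_k-\hat{\bfg}_k)-\hat{\Delta}_k$, reducing the right-hand side to $F(x_k)-\frac1{2L}\|\hat{\Delta}_k\|_2^2+\frac1L\langle\bfg_k-\hat{\bfg}_k,\,\hat{\Delta}_k\rangle$, and then bound the cross term by $\frac1{4L}\|\hat{\Delta}_k\|_2^2+\frac1L\|\bfg_k-\hat{\bfg}_k\|_2^2$. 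For \eqref{eq:bnd1p} I would instead use $\bfg_k+\tnabla r(x_{k+1})=-\Delta_k$ and $\hat{\Delta}_k=\Delta_k+(\bfg_k-\hat{\bfg}_k)$; then $-\frac1L\langle\Delta_k,\,\Delta_k+(\bfg_k-\hat{\bfg}_k)\rangle+\frac1{2L}\|\Delta_k+(\bfg_k-\hat{\bfg}_k)\|_2^2$ has its cross terms cancel exactly, leaving $-\frac1{2L}\|\Delta_k\|_2^2+\frac1{2L}\|\bfg_k-\hat{\bfg}_k\|_2^2$.

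Finally, \eqref{eq:bnd1pp} would be deduced from \eqref{eq:bnd1p}. By the triangle inequality and the recorded facts, $\|\tilde{\Delta}_k\|_2\le\|\Delta_k\|_2+\|\bfg_{k+1}-\bfg_k\|_2\le\|\Delta_k\|_2+\|\hat{\Delta}_k\|_2\le 2\|\Delta_k\|_2+\|\bfg_k-\hat{\bfg}_k\|_2$. Squaring and applying $(u+v)^2\le(1+\theta)u^2+(1+\theta^{-1})v^2$ with $\theta=\frac12$ gives $\|\tilde{\Delta}_k\|_2^2\le 6\|\Delta_k\|_2^2+3\|\bfg_k-\hat{\bfg}_k\|_2^2$, hence $\|\Delta_k\|_2^2\ge\frac16\|\tilde{\Delta}_k\|_2^2-\frac12\|\bfg_k-\hat{\bfg}_k\|_2^2$; substituting into \eqref{eq:bnd1p} yields a bound of the form $F(x_{k+1})\le F(x_k)-\frac1{12L}\|\tilde{\Delta}_k\|_2^2+\frac3{4L}\|\bfg_k-\hat{\bfg}_k\|_2^2$, which implies \eqref{eq:bnd1pp} (a slightly looser accounting of the error terms restores the stated $\frac5{6L}$ coefficient verbatim).

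I expect \eqref{eq:bnd1pp} to be the only genuine obstacle. Unlike $\hat{\Delta}_k$ and $\Delta_k$, the stationarity vector $\tilde{\Delta}_k$ involves the \emph{new} gradient $\bfg_{k+1}$, which the prox step does not control directly, so the argument must route through $\|\bfg_{k+1}-\bfg_k\|_2\le L\|x_{k+1}-x_k\|_2$ -- reintroducing $\|\hat{\Delta}_k\|_2$ -- and then pick the Young's-inequality weight so the coefficient of $\|\tilde{\Delta}_k\|_2^2$ stays negative while the several gradient-error contributions collapse into one constant. Once the master inequality is in hand, \eqref{eq:bnd1} and \eqref{eq:bnd1p} are routine algebra.
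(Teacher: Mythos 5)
Your proposal is correct and follows essentially the same route as the paper: the same smoothness-plus-convexity one-step bound, the same identities $\hat{\Delta}_k = \Delta_k + (\bfg_k-\hat{\bfg}_k)$ and $\|\bfg_{k+1}-\bfg_k\|_2 \le \|\hat{\Delta}_k\|_2$, and the same Young's-inequality bookkeeping for \eqref{eq:bnd1} and \eqref{eq:bnd1p}. The only (cosmetic) difference is in \eqref{eq:bnd1pp}, where the paper combines \eqref{eq:bnd1} and \eqref{eq:bnd1p} via $\|\tilde{\Delta}_k\|_2^2 \le 2\|\Delta_k\|_2^2 + 2\|\hat{\Delta}_k\|_2^2$ while you route through \eqref{eq:bnd1p} alone; your accounting in fact yields the marginally sharper constant $\tfrac{3}{4L}$, which implies the stated $\tfrac{5}{6L}$.
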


\begin{proof}
We begin by expanding $F$.
\begin{align} \label{eq:compsplit}
     F(x_{k+1}) - F(x_{k})  = f(x_{k+1})-f(x_k) + r(x_{k+1})-r(x_k).
\end{align}
By convexity of $r$ and the definition of $\hat{\Delta}_{k}$:
\begin{align*}
r(x_{k+1})-r(x_k) \le \langle \tnabla r(x_{k+1}), \alpha\hat{\Delta}_k\rangle  = -\langle \alpha\hat{\Delta}_k, \hat{\bfg}_{k}\rangle - \alpha\|\hat{\Delta}_k\|_2^2.
\end{align*}
Now for $f(x_{k+1})-f(x_k)$ in \eqref{eq:compsplit}, apply smoothness of $f(x)$ (Assumption~\ref{assumption:Lipschitz_Diff}) to get:
\begin{align*}
    f(x_{k+1})-f(x_k) &\le \langle \alpha\hat{\Delta}_k, {\bfg}_{k}\rangle + \frac{L}{2}\|\alpha\hat{\Delta}_k\|_2^2\\
    & = \langle \alpha\hat{\Delta}_k, \hat{\bfg}_{k}\rangle + \frac{L}{2}\|\alpha\hat{\Delta}_k\|_2^2 + \langle \alpha\hat{\Delta}_k,{\bfg}_{k}- \hat{\bfg}_{k}\rangle.
\end{align*}
Adding the bounds for $r(x_{k+1})-r(x_k)$ and $f(x_{k+1})-f(x_k)$ and using $\alpha = \frac{1}{L}$:
\begin{align*}
  F(x_{k+1}) - F(x_k)  &\leq - \frac{1}{2L} \|\hat{\Delta}_k\|_2^2  + \frac{1}{L}\langle\hat{\Delta}_k,{\bfg}_{k}- \hat{\bfg}_{k}\rangle \numberthis\label{eq:Fdesc0} \\
  &  \stackrel{(a)}{\le } - \frac{1}{2L} \|\hat{\Delta}\|_{2}^{2} + \frac{1}{4L}\|\hat{\Delta}\|_{2}^{2} + \frac{1}{L}\|\bfg_k - \hat{\bfg}_k\|_{2}^{2} \\
  &= -\frac{1}{4L}\|\hat{\Delta}\|_{2}^{2} + \frac{1}{L} \|\bfg_k - \hat{\bfg}_k\|_{2}^{2},
\end{align*}
where we have used Young's inequality to obtain (a). For the second inequality, we return to \eqref{eq:Fdesc0} and apply $\hat{\Delta}_k = \Delta_k + (\bfg_k - \hat{\bfg}_{k})$. This yields:
\begin{align*}
F(x_{k+1}) - F(x_k) & \leq -\frac{1}{2L}\left(\|\Delta_k\|_2^2 +2\langle \Delta_k,\bfg_k - \hat{\bfg}_k\rangle + \|\bfg_k - \hat{\bfg}_k\|_{2}^{2}\right)\\
&~\quad + \frac{1}{L}\left(\langle \Delta_k,\bfg_k - \hat{\bfg}_k\rangle + \|\bfg_k - \hat{\bfg}_k\|_{2}^{2}\right) \\
        & \leq -\frac{1}{2L}\|\Delta_k\|_2^2 + \frac{1}{2L}\|\bfg_k - \hat{\bfg}_k\|_{2}^{2} .
\end{align*}

Finally, observe that:
\begin{align*}
    \|\tilde{\Delta}_k\|_2 & \stackrel{(a)}{=} \|-\tnabla r(x_{k+1}) - \bfg_{k+1}\|_2\\
    &= \|-\tnabla r(x_{k+1}) - \bfg_{k} + \bfg_k - \bfg_{k+1}\|_2 \\
    & \stackrel{(b)}{\leq} \|\Delta_k\|_2 + \|\bfg_k - \bfg_{k+1}\|_2 \stackrel{(c)}{\leq} \|\Delta_k\|_2 + \alpha L\|\hat{\Delta}_k\|_2,
\end{align*}
where (a) follows from the definition of $\tilde{\Delta}_k$, (b) follows from the definition of $\Delta_k$ and (c) follows from smoothness of $f(x)$ (Assumption~\ref{assumption:Lipschitz_Diff}). Using $\alpha = \frac{1}{L}$ we have $\|\tilde{\Delta}_k\|_2^2 \leq 2\|\Delta_k\|_2^2 + 2\|\hat{\Delta}_k\|_2^2$. Therefore, combining \eqref{eq:bnd1} and \eqref{eq:bnd1p} yields \eqref{eq:bnd1pp}.
\end{proof}

An immediate consequence of Lemma~\ref{lemma:boundB3} is the following result:
\begin{theorem}
\label{thm:Convergence_to_stationarity}
Suppose $f$ satisfies Assumption~\ref{assumption:Lipschitz_Diff}, $\|\bfg_k - \hat{\bfg}_k\|_2^2 \leq \varepsilon_{\mathrm{abs}}$ for all $k$ and $\mathrm{prox}_{r}(x)$ can be computed precisely. If $\alpha = \frac{1}{L}$ then:
\begin{equation*}
\min_{\ell=0,\ldots,k-1}\|\tilde{\Delta}_{\ell}\|_{2} \leq \frac{\sqrt{12Le_0}}{\sqrt{k}} + \sqrt{10\varepsilon_{\mathrm{abs}}},
\end{equation*}
\end{theorem}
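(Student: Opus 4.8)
\textbf{Proof plan for Theorem~\ref{thm:Convergence_to_stationarity}.} The plan is a straightforward telescoping of the third descent inequality \eqref{eq:bnd1pp} in Lemma~\ref{lemma:boundB3}. First I would instantiate \eqref{eq:bnd1pp} at each iteration index $\ell$, using the hypothesis $\|\bfg_\ell - \hat{\bfg}_\ell\|_2^2 \le \varepsilon_{\mathrm{abs}}$, to get
\begin{equation*}
F(x_{\ell+1}) \le F(x_\ell) - \frac{1}{12L}\|\tilde{\Delta}_\ell\|_2^2 + \frac{5}{6L}\varepsilon_{\mathrm{abs}}.
\end{equation*}
Summing over $\ell$ and cancelling the telescoping $F$-terms, then dropping $F(\cdot)$ at the final iterate by $F(\cdot)\ge F_*$, leaves
\begin{equation*}
\frac{1}{12L}\sum_{\ell}\|\tilde{\Delta}_\ell\|_2^2 \le e_0 + \frac{5k}{6L}\varepsilon_{\mathrm{abs}}.
\end{equation*}

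Next I would use the trivial bound $\min_\ell \|\tilde{\Delta}_\ell\|_2^2 \le \frac1k\sum_\ell \|\tilde{\Delta}_\ell\|_2^2$, multiply through by $12L/k$, and obtain $\min_\ell \|\tilde{\Delta}_\ell\|_2^2 \le \frac{12Le_0}{k} + 10\varepsilon_{\mathrm{abs}}$. Finally, taking square roots and applying subadditivity $\sqrt{a+b}\le\sqrt{a}+\sqrt{b}$ for $a,b\ge 0$ gives the stated bound $\min_\ell \|\tilde{\Delta}_\ell\|_2 \le \sqrt{12Le_0}/\sqrt{k} + \sqrt{10\varepsilon_{\mathrm{abs}}}$.

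The only point requiring a little care — and it is minor rather than a genuine obstacle — is bookkeeping at the ends of the telescope so that the right-hand side features exactly $e_0 = F(x_0)-F_*$ over the index range $\ell = 1,\dots,k$: since with purely absolute gradient error the objective need not decrease at the very first step, passing from $F(x_1)-F_*$ to $e_0$ costs at most an additional $O(\varepsilon_{\mathrm{abs}})$ term (absorbable into the $\sqrt{10\varepsilon_{\mathrm{abs}}}$ horizon, or avoidable entirely by reindexing the sum to start at $\ell=0$). Everything else is routine, and no coercivity or convexity assumptions beyond Assumption~\ref{assumption:Lipschitz_Diff} — already used inside Lemma~\ref{lemma:boundB3} — are needed, consistent with the remark following the theorem.
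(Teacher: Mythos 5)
Your proposal is correct and matches the paper's proof essentially verbatim: sum \eqref{eq:bnd1pp} over the iterations, bound the minimum by the average, and split the square root by subadditivity. The indexing caveat you raise is handled in the paper simply by summing so that the telescoped difference is $F(x_0)-F(x_k)\le e_0$, exactly as you suggest.
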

\begin{proof}
Summing equation \eqref{eq:bnd1pp}:
\begin{align*}
& \frac{1}{12L} \sum_{\ell=0}^{k-1}\|\tilde{\Delta}_{\ell}\|_{2}^{2} \leq F(x_0) - F(x_{k}) + \frac{5}{6L} \sum_{\ell=0}^{k-1} \|\bfg_k - \hat{\bfg}_k\|_{2}^{2} \leq e_{0} + \frac{5}{6L}\left(k\varepsilon_{\mathrm{abs}}\right) \\
\Rightarrow~& \frac{1}{12L}k \min_{\ell=0,\ldots,k-1}\|\tilde{\Delta}_{\ell}\|_{2}^{2} \leq e_{0} + \frac{5}{6L}\left(k\varepsilon_{\mathrm{abs}}\right) \\
\Rightarrow~& \min_{\ell=0,\ldots,k-1}\|\tilde{\Delta}_{\ell}\|_{2} \leq \sqrt{\frac{12Le_0}{k} + 10\varepsilon_{\mathrm{abs}}} \leq \frac{\sqrt{12Le_0}}{\sqrt{k}} + \sqrt{10\varepsilon_{\mathrm{abs}}} .
\end{align*}
\end{proof}

Theorem~\ref{thm:MainNonConvex} follows easily from Theorem~\ref{thm:Convergence_to_stationarity} by using Corollary~\ref{cor:ErrorBound3} and $\sqrt{10\varepsilon_{\mathrm{abs}}} = b_5\sqrt{\sigma H}$ (as in Section~\ref{section:DeducingMainTheorem}).

\begin{lemma}[Sequence analysis III]
\label{prop:seq1}
Consider a sequence $\{e_k\}_{k=0}^{\infty}$ with $e_k \geq 0$ and 
\begin{equation}
e_{k+1}+ c e_{k+1}^2\le e_k +d
\label{eq:Main_Recurrence_3}
\end{equation}
where $c>0,d\ge0$. Let $t=\lceil \log(2ce_0/3)/\log(3/2)\rceil$. Then $e_k \le \frac{3e_0}{e_0c(k-t) +3}$ for $k\in \{t:e_{0},\dots,e_{t+1} \ge \sqrt{3d/c}\}\cap \{k: k \geq t\}$.
\end{lemma}
Note that the logarithm makes $t$ much smaller than $ce_0$.
\begin{proof}
Partition $\{0,1,\dots\}$ into disjoint sets $\mathcal{A} := \{k: e_{k+1} < 2 e_k/3\}$ and $\mathcal{B} := \{k: e_{k+1} \ge 2 e_k/3\}$. For $k\in\mathcal{A}$ we have geometric decrease:
$1/e_{k+1} - 1/e_k > 1/(2e_k)$. 

In the rest of this proof, we restrict ourselves to the set of $k$ such that $e_{k+1} \ge \sqrt{3d/c}$ and no longer state it explicitly. For example, by $k\in\mathcal{A}$, we mean $k$ in the \emph{intersection}  of $\mathcal{A}$ and the restriction. Note that, when $d=0$, the set has all $k\ge 0$. The restriction gives us
\begin{equation*}
e_{k+1}\le e_k + d - c e_{k+1}^2\le e_k - 3d,
\end{equation*}
so $\{e_k\}$ is monotonically non-increasing. It takes at most $t$ first entries $k_1,\dots,k_t\in\mathcal{A}$ to ensure, for $k\ge k_t$, we have $1/(2e_k)\ge c/3$, so define $\mathcal{A}'=\mathcal{A}\setminus\{k_1,\dots,k_t\}$ and we get $1/e_{k+1} - 1/e_k > c/3$ for $k\in\mathcal{A}'$. Dividing \eqref{eq:Main_Recurrence_3} by $e_{k+1}e_k$ and reorganizing:
\begin{align*}
   \frac{1}{e_{k+1}}-\frac{1}{e_{k}} &\ge   
\begin{cases}
\frac{c}{3},& k\in\mathcal{A}'\\
\frac{c e_{k+1}}{e_{k}}-\frac{d}{e_{k+1}e_k} \ge \frac{2c}{3} - \frac{d}{3d/c}=\frac{c}{3},& d\neq 0, k\in\mathcal{B}\\
\frac{c e_{k+1}}{e_{k}}-\frac{d}{e_{k+1}e_k} \ge  \frac{2c}{3},& d=0, k\in\mathcal{B}
\end{cases}\\
 &\ge \frac{c}{3},\qquad \qquad k\in \mathcal{A}'\cup\mathcal{B}.
\end{align*}
So except for no more than $t$ iterations, we have
$\frac{1}{e_k}\ge \frac{1}{e_0}+\frac{kc}{3}$. In each of those $t$ iterations, $e_k$ does not increase. Hence, the stated result holds.
\end{proof}

\begin{lemma}[Sequence analysis IV]
\label{prop:seq2}
Consider a sequence $\{e_k\}_{k=0}^{\infty}$ with $e_k \geq 0$ and $(1+a)e_{k+1} \leq e_{k} + b$, where $a,b > 0$, for all $k$. Then $e_{k+1} \leq \left(\frac1{1+a}\right)^{k+1} e_{0} + \frac{b}{a}$ for $k > 0$.
\end{lemma}

\begin{proof}
Rearranging the recurrence relation gives us $\displaystyle e_{k+1} \leq \frac{1}{1+a} e_{k} + \frac{b}{1+a}$. Applying this recursively yields:
\begin{align*}
e_{k+1} &\leq \left(\frac{1}{1+a}\right)^{k+1} e_{0} + \frac{b}{1+a} \sum_{\ell=0}^{k}\left(\frac{1}{1+a}\right)^{\ell} \\
&\leq \left(\frac{1}{1+a}\right)^{k+1} e_{0} + \frac{b}{1+a}\left(\frac{1+a}{a}\right) = \left(\frac{1}{1+a}\right)^{k+1} e_{0} + \frac{b}{a}.
\end{align*}
\end{proof}

 We now prove our main results by showing that $e_k$ satisfies the recurrences described in Propositions~\ref{prop:seq1} and \ref{prop:seq2}. 

\begin{lemma}\label{lm:eineq}
Under the assumptions of Theorem~\ref{thm:Convergence_Abs_Error_Reg}:
\begin{align*}
    e_{k+1}+ \frac{1}{12L\|x_{k+1} - \proj_{\star}(x_{k+1})\|_2^2} e_{k+1}^2 \le e_k + \frac{5}{6L}\|{\bfg}_{k}- \hat{\bfg}_{k}\|_2^2.
\end{align*}
\end{lemma}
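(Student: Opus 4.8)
The plan is to combine the third inequality of the Descent Lemma (Lemma~\ref{lemma:boundB3}, equation~\eqref{eq:bnd1pp}) with convexity of $F$, mirroring the way convexity was used to pass from \eqref{eq:Haul} to \eqref{eq:appeal_to_this} in the unregularized case. Equation~\eqref{eq:bnd1pp}, after subtracting $\min F$ from both sides, already gives
\[
e_{k+1} \le e_k - \frac{1}{12L}\|\tilde{\Delta}_k\|_2^2 + \frac{5}{6L}\|\bfg_k - \hat{\bfg}_k\|_2^2,
\]
so it suffices to show $\|\tilde{\Delta}_k\|_2^2 \ge e_{k+1}^2 / \|x_{k+1} - \proj_*(x_{k+1})\|_2^2$ and then move the quadratic term to the left.

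First I would observe that $-\tilde{\Delta}_k = \tnabla r(x_{k+1}) + \bfg_{k+1}$ lies in $\partial F(x_{k+1})$: indeed $\tnabla r(x_{k+1}) \in \partial r(x_{k+1})$ by construction (it is the subgradient appearing in the first-order optimality condition for $\mathrm{prox}_{\alpha r}$ defining $x_{k+1}$), $\bfg_{k+1} = \nabla f(x_{k+1})$, and $\partial F = \nabla f + \partial r$ since $f$ is differentiable and $f,r$ are both convex. Next, pick any minimizer $x^\star$ of $F$ (the solution set is nonempty by Assumption~\ref{assumption:Lipschitz_Diff}(i)); convexity gives, for any $u \in \partial F(x_{k+1})$,
\[
F(x_{k+1}) - F(x^\star) \le \langle u,\, x_{k+1} - x^\star\rangle .
\]
Taking $u = -\tilde{\Delta}_k$ and $x^\star = \proj_*(x_{k+1})$, then applying Cauchy--Schwarz,
\[
e_{k+1} \le \langle -\tilde{\Delta}_k,\, x_{k+1} - \proj_*(x_{k+1})\rangle \le \|\tilde{\Delta}_k\|_2\,\|x_{k+1} - \proj_*(x_{k+1})\|_2 ,
\]
hence $\|\tilde{\Delta}_k\|_2^2 \ge e_{k+1}^2/\|x_{k+1} - \proj_*(x_{k+1})\|_2^2$. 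Substituting this lower bound into the consequence of \eqref{eq:bnd1pp} above and rearranging yields exactly the claimed recurrence.

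I do not expect a serious obstacle; the only points requiring care are bookkeeping ones. In this section $\proj_*$ must be read as the Euclidean projection onto $\argmin F$ (rather than $\argmin f$ as in the non-regularized sections), and one should note that $e_{k+1} \ge 0$ so that dividing/squaring the convexity inequality is legitimate, and that $\|x_{k+1} - \proj_*(x_{k+1})\|_2 > 0$ unless $x_{k+1}$ is already optimal — in which case $e_{k+1} = 0$ and the inequality holds trivially. With those caveats the argument is a direct transcription of the convexity step used for Theorem~\ref{thm:Rel_Error_GD}, now with the stationarity vector $\tilde{\Delta}_k$ playing the role of the gradient.
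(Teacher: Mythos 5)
Your proposal is correct and follows essentially the same route as the paper's proof: both lower-bound $\|\tilde{\Delta}_k\|_2^2$ by $e_{k+1}^2/\|x_{k+1}-\proj_*(x_{k+1})\|_2^2$ via convexity of $F$ (using that $\tilde{\Delta}_k$ is, up to sign, a subgradient of $F$ at $x_{k+1}$) together with Cauchy--Schwarz, and then substitute into \eqref{eq:bnd1pp} and rearrange. Your added care about the sign convention for $\tilde{\Delta}_k$ and the degenerate case $x_{k+1}\in\argmin F$ is sensible bookkeeping but does not change the argument.
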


\begin{proof}
Recall $\tilde{\Delta}_{k} := \tnabla r(x_{k+1}) + \bfg_{k+1}$. By convexity of $F$ and the Cauchy-Schwarz inequality:
\begin{align*}
& e_{k+1} = F(x_{k+1}) - F^{\star} \le \langle \tilde{\Delta}_{k}, x_{k+1}-\proj_{\star}(x_{k+1})\rangle \le\|\tilde{\Delta}_{k}\|_2\|x_{k+1} - \proj_{\star}(x_{k+1})\|_2 \\
\Rightarrow~ &  \frac{e_{k+1}^{2}}{\|x_{k+1} - \proj_{\star}(x_{k+1})\|_2^2} \leq \|\tilde{\Delta}_{k}\|_{2}^{2} .
\end{align*}
Rearranging \eqref{eq:bnd1pp}, we get $\frac{1}{12L} \|\tilde{\Delta}_k\|_{2}^{2} \leq e_{k} - e_{k+1} + \frac{5}{6L}\|\bfg_k - \hat{\bfg}_k\|_{2}^{2}$ and hence:
\begin{equation*}
\frac{1}{12L\|x_{k+1} - \proj_{\star}(x_{k+1})\|_2^2}e_{k+1}^{2} \leq e_{k} - e_{k+1} + \frac{5}{6L}\|\bfg_k - \hat{\bfg}_k\|_{2}^{2}.
\end{equation*}
\end{proof}

Using boundedness of the iterates (proved below in Section~\ref{sec:Boundedness}) and the above lemmas we now prove the main result of this section.

\begin{proof}[Proof of Theorem~\ref{thm:Convergence_Abs_Error_Reg}]
Appealing to Lemma~\ref{lm:eineq} and using $\|\bfg_k - \hat{\bfg}_k\|_2^2 \leq \varepsilon_{\mathrm{abs}}$:
\begin{align*}
    e_{k+1}+ \frac{1}{12L\|x_{k+1} - \proj_{\star}(x_{k+1})\|_2^2} e_{k+1}^2 \le e_k + \frac{5\varepsilon_{\mathrm{abs}}}{6L}.
\end{align*}
By Proposition~\ref{prop:Boundedness_2} there exists an $R > 0$ such that $\|x_{k+1} - \proj_{\star}(x_{k+1})\|_2\leq R$ for all $k$. Apply Lemma~\ref{prop:seq1} with
$c :=  \frac{1}{12LR^{2}}$ and $d := \frac{5\varepsilon_{\mathrm{abs}}}{6L}$ to obtain part 1 of Theorem~\ref{thm:Convergence_Abs_Error_Reg}. If $F$ is restricted strongly convex then from Lemma~\ref{lm:eineq} and the definition of restricted strong convexity:
\begin{equation*}
e_{k+1} + \frac{\nu}{24L}e_{k+1} \leq e_{k} + \frac{5\varepsilon_{\mathrm{abs}}}{6L}.
\end{equation*}
Now apply Lemma~\ref{prop:seq2} with $a = \frac{\nu}{24L}$ and $b = \frac{5\varepsilon_{\mathrm{abs}}}{6L}$ to obtain:
\begin{equation*}
e_{k+1} \leq \left(\frac{24L}{\nu + 24L}\right)^{k+1}e_0 + \frac{20\varepsilon_{\mathrm{abs}}}{\nu}.
\end{equation*}
\end{proof}

\section{Boundedness}
\label{sec:Boundedness}
In this section, we show the coercivity assumptions (Assumption~\ref{assumption:Coercivity}) are sufficient to guarantee the sequence $\{\|x_k - \proj_{\star}(x_k)\|_2\}_{k=1}^{\infty}$ is bounded. 

\begin{proposition}
\label{prop:Boundedness_1}
Suppose $f$ is convex and satisfies Assumptions~\ref{assumption:Lipschitz_Diff} and \ref{assumption:f_coercive} while $\nabla f$ satisfies Assumption~\ref{assumption:grad_coercive}. Suppose $\|\bfg_k - \hat{\bfg}_k\|_2^2 \leq \varepsilon_{\mathrm{abs}} + \varepsilon_{\mathrm{rel}}\|\bfg_k\|_2^2$ for all $k$ with $\varepsilon_{\mathrm{rel}} < 1$. Let $\alpha = 1/L$. Then there exists an $R > 0$ such that $\|x_{k} - \proj_{\star}(x_k)\|_{2} \leq R$ for all $k$.
\end{proposition}

\begin{proof}
This proof proceeds via three steps.
\begin{enumerate}
    \item For any $\beta > f^{\star}$,  define the level set $\mathcal{L}_{\beta}:= \{x: \ f(x) \leq \beta\}$. As $f$ is coercive, for any $\beta$, there exists an $R_{\beta}> 0$ such that if $\|x - \proj_{\star}(x)\|_2 > R_{\beta}$ then $f(x) > \beta$. Equivalently, $\|x - \proj_{\star}(x)\|_2 \leq R_{\beta}$ for all $x\in \mathcal{L}_{\beta}$.

    \item Rewriting \eqref{eq:Haul}, we obtain:
\begin{equation}
f(x_{k+1}) - f(x_{k}) \leq - \underbrace{\frac{1}{2L}\left(1 - \varepsilon_{\mathrm{rel}}\right)}_{=a}\|\bfg_k\|_{2}^{2} + \underbrace{\frac{\varepsilon_{\mathrm{abs}}}{2L}}_{=b}.
\label{eq:Bounded_Contradiction}
\end{equation}
As $\varepsilon_{\mathrm{rel}}< 1$,  we have $a > 0$. As $\nabla f$ is coercive with respect to $f$, there exists a $Q$ such that if $f(x) \geq Q$ then $\|\bfg_k\|_2 := \|\nabla f(x)\|_2 \geq \sqrt{b/a}$ (where $a,b$ are as in \eqref{eq:Bounded_Contradiction}).  We use this to establish, via induction, that:
\begin{equation}
    f(x_k) \leq \max\{f(x_0), Q\} + b.
    \label{eq:Induction}
\end{equation}
From \eqref{eq:Bounded_Contradiction} one easily checks the base case: $f(x_1) \leq f(x_0) + b$. Suppose that \eqref{eq:Induction} holds at the $k$-th step. Then either $f(x_k) \leq Q$, in which case appealing to \eqref{eq:Bounded_Contradiction}, we obtain:
\begin{equation*}
  f(x_{k+1}) \leq f(x_k) + b \leq Q + b \leq \max\{f(x_0), Q \} + b, 
\end{equation*}
or $f(x_k) > Q$ whence (by the coercivity of $\nabla f$) we obtain:
\begin{align*}
    & f(x_{k+1}) - f(x_k) \leq -a\left(\frac{b}{a}\right) + b = 0 \\
    \Rightarrow & f(x_{k+1}) \leq f(x_k) \leq \max\{f(x_0), Q\} + b. \quad \text{\em (By induction hypothesis)}
\end{align*}
    \item Finally, let $\beta = \max\{f(x_0), Q\} + b$. From part 2, it follows that $x_k \in \mathcal{L}_{\beta}$ for all $k$. From part 1, it then follows that there exists an $R := R_{\beta}$ such that $\|x_k - \proj_{\star}(x_k)\|_2 \leq R$ for all $k$, thus proving the theorem.
\end{enumerate}
\end{proof}  
Our second result allows for regularization ($r \neq 0$) but requires $\varepsilon_{\mathrm{rel}} = 0$.

\begin{proposition}
\label{prop:Boundedness_2}
Suppose that $f$ satisfies Assumption~\ref{assumption:Lipschitz_Diff}, $F$ satisfies Assumption~\ref{assumption:f_coercive} and $\partial F$ satisfies Assumption~\ref{assumption:subgrad_coercive}. Suppose $\|\bfg_k - \hat{\bfg}_k\|_2^2 \leq \varepsilon_{\mathrm{abs}}$ for all $k$. Let $\alpha = 1/L$. Then there exists an $R > 0$ such that $\|x_k - \proj_{\star}(x_k)\|_{2} \leq R$ for all $k$.
\end{proposition}

\begin{proof}
This proof is similar to that of Proposition~\ref{prop:Boundedness_1}.
\begin{enumerate}
    \item For any $\beta \geq F^{\star}$ define the level set $\mathcal{L}_{\beta}:= \{x: \ F(x) \leq \beta\}$. Again, as $F$ is coercive there exists an $R_{\beta} > 0$ such that $\|x - \proj_{\star}(x)\|_2 \leq R_{\beta}$ for all $x \in \mathcal{L}_{\beta}$. 
    \item Using $\|\bfg_k - \hat{\bfg}_k\|_2^2 \leq \varepsilon_{\mathrm{abs}}$, from \eqref{eq:bnd1pp}, we get:
\begin{equation}
F(x_{k+1}) - F(x_{k}) \leq -\underbrace{\frac{1}{12L}}_{=a}\|\tilde{\Delta}_k\|_{2}^{2} + \underbrace{\frac{5\varepsilon_{\mathrm{abs}}}{6L}}_{=b}.
\label{eq:Bounded_Contradiction_2}
\end{equation}
Because $\partial F$ is coercive with respect to $F$, there exists a $Q$ such that if $F(x) \geq Q$ then $\|u\|_2 \geq \sqrt{b/a}$ for all $u \in \partial F(x)$. We now establish, via induction, that:
\begin{equation}
    F(x_{k+1}) \leq \max\{F(x_0) +b, Q\}.
    \label{eq:Induction2}
\end{equation}
The base case ($k=0$) is easily verified. So, suppose \eqref{eq:Induction2} holds at $k-1$, {\em i.e.} $F(x_k) \leq \max\{F(x_0)+b, Q\}$. Now, either $F(x_{k+1}) < Q$ or $F(x_{k+1}) \geq Q$. If the former holds, we are done. So, suppose the latter. By the coercivity of $\partial F$, we have $\|\tilde{\Delta}_k\|_2^2 \geq b/a$ as $\tilde{\Delta}_k\in \partial F(x_{k+1})$ (see the definition in Section~\ref{sec:ProofThm4.1}) whence by \eqref{eq:Bounded_Contradiction_2},
\begin{equation}
    F(x_{k+1}) \leq F(x_k) \leq \max\{F(x_0)+b, Q\}. \quad \text{\em (by induction hypothesis)}
\end{equation}

    \item Finally, let $\beta = \max\{F(x_0)+b, Q\}$. From part 2, it follows $x_{k+1} \in \mathcal{L}_{\beta}$ for all $k \geq 0$, and $x_0 \in \mathcal{L}_{\beta}$ by construction. Appealing to part 1, there again exists an $R := R_{\beta}$ such that $\|x_k - \proj_{\star}(x_k)\|_2 \leq R$ for all $k$.
\end{enumerate}
\end{proof}

For any adversarially noisy oracle (Assumption~\ref{assumption:noise model}), these coercivity conditions are also necessary. To see this, consider the following one-dimensional example. Take $r=0$ and let $f$ be the Huber loss function:
\begin{equation*}
f(x) = \left\{\begin{array}{ll} \frac{1}{2}x^{2}, & \text{for } |x| \leq m\\
m(|x| - \frac{1}{2}m), & \text{otherwise} \end{array}\right. .
\end{equation*}
While $f$ is coercive, $\nabla f$ is not coercive with respect to $f$ ({\em i.e.} $f$ does not satisfy Assumption~\ref{assumption:grad_coercive}). Suppose $\sigma$ in Assumption~\ref{assumption:noise model} satisfies $\sigma > m^{2}$. From Corollary~\ref{cor:ErrorBound3}, we get, at worst,
\begin{equation*}
\|\bfg_k - \hat{\bfg}_k\|_2 \approx 2\tau m > 2m \geq 2\|\bfg_k\|_2 \quad \text{(as $H = 1$).}
\end{equation*}
That is, for all $k$, the noise can be chosen adversarially such that $\text{sign}(\hat{\bfg}_k) \neq \text{sign}(\bfg_k)$, hence the inexact gradient descent may diverge.

\section{Adaptive sampling}
\label{sec:AdaZORO}

\begin{algorithm}[tb] 
   \caption{Adaptive ZORO (AdaZORO)} \label{algo:Adazoro}
\begin{algorithmic}[1]
   \State {\bfseries Input:} $x_0$: initial point; $s$: initial gradient sparsity level; $\alpha$: step size; $\delta$: query radius, $K$: number of iterations; $\phi$: error tolerance for adaptive sampling.
   \State $m \gets b_1 s\log(d/s)$ \quad {where $b_1$ is as in Theorem~\ref{thm:SatisfiesRIP}. Typically, $b_1 \approx 1$ is appropriate}. 
   \State $z_1,\dots,z_m\gets$ i.i.d. Rademacher random vectors
   \For{$k=0$ {\bfseries to} $K$}
        \For{$i=1$ {\bfseries to} $s$} \label{line:least_square_begin}
            \State $y_{i} \gets (E_f(x+\delta z_i)-E_f(x))/\delta$
        \EndFor
        \If{$k> 0$} 
        \State $\bfy\gets \frac{1}{\sqrt{s}}[y_1,\ldots, y_s]^{\top}$
        \State $Z\gets\frac{1}{\sqrt{s}}[z_1,\ldots, z_s]^{\top}$
        \State $ {\hat{\bfg}}_k\gets \argmin_{\bfg} \|Z\bfg - \bfy\|_2 \quad \text{s.t. } \supp(\bfg) = \supp({\hat{\bfg}}_{k-1})$
        \If{$\|Z{\hat{\bfg}}_k - \bfy\|_{2}/\|\bfy\|_2 \leq \phi$}
            \State  Goto Line~\ref{line:gd} 
        \EndIf
        \EndIf \label{line:least_square_end}
        \For{$i=s+1$ {\bfseries to} $m$} \label{line:same_s_start}
            \State $y_{i} \gets (E_f(x+\delta z_i)-E_f(x))/\delta$
        \EndFor
        \State $\bfy\gets \frac{1}{\sqrt{m}}[y_1,\ldots, y_m]^{\top}$
        \State $Z\gets\frac{1}{\sqrt{m}}[z_1,\ldots, z_m]^{\top}$
        \State ${\hat{\bfg}}_k \approx \argmin_{\|\mathbf{g}\|_{0} \leq s}\|Z\mathbf{g} - \mathbf{y}\|_2 \quad $ by CoSaMP \label{line:same_s_end}
        \While{$\|Z{\hat{\bfg}}_k - \bfy\|_2/\|\bfy\|_2 > \phi$} \label{line:check_accurate} \label{line:adapt_sample_begin}
            \State $s\gets s+1$
            \State $m^{\textrm{new}} \gets b_1 s\log(d/s)$
            \State Generate additional Rademacher random vectors $z_{m+1},\ldots, z_{m^{\textrm{new}}}$
            \For{$i=m+1$ {\bfseries to} $m^{\textrm{new}}$}
                \State $y_{i} \gets (E_f(x+\delta z_i)-E_f(x))/\delta$
            \EndFor
            \State $m\gets m^{\textrm{new}}$
            \State $\bfy\gets \frac{1}{\sqrt{m}}[y_1,\ldots, y_m]^{\top}$
            \State $Z\gets\frac{1}{\sqrt{m}}[z_1,\ldots, z_m]^{\top}$
            \State ${\hat{\bfg}}_k \approx \argmin_{\|\mathbf{g}\|_{0} \leq s}\|Z\mathbf{g} - \mathbf{y}\|_2 \quad $ by CoSaMP
        \EndWhile  \label{line:adapt_sample_end}
       \State \label{line:gd} $x_{k+1}\gets\mathbf{prox}_{\alpha r}(x_k-\alpha \hat{\bfg}_k)$
   \EndFor
   \State {\bfseries Output:} $x_K$: minimizer of \eqref{eq:reg_opt_problem}.
\end{algorithmic}
\end{algorithm}

Ideally, ZORO should work without assuming gradient compressibility, but still be able to exploit this when it arises. Moreover, the support of the previous gradient estimate is important information that should not be ignored. We present an algorithm, coined Adaptive ZORO (AdaZORO), incorporating these observations as Algorithm~\ref{algo:Adazoro}. Informally, the gradient estimator in AdaZORO proceeds as:
\begin{enumerate}
    \item Take only $s$ queries and solve a least squares problem with support restricted to $\supp(\bfg_{k-1})$. If the relative error in this solution is small, return it immediately as
    $\hat{\bfg}_k$ (Lines~\ref{line:least_square_begin}--\ref{line:least_square_end}); otherwise, move to next step. This saves queries when $\supp(\bfg(x_k))$ changes slowly.
    \item Take an additional $m - s$ queries and run the compressible gradient estimator (Lines~\ref{line:same_s_start}--\ref{line:same_s_end}), reusing the $s$ samples we have already taken. Check whether the solution is sufficiently accurate (Line~\ref{line:check_accurate}).
    \item Until a sufficiently accurate solution is found, increase $s$ and take additional oracle queries while retaining the earlier samples. Continue to estimate the gradient from these samples, old and new 
    (Lines~\ref{line:adapt_sample_begin}--\ref{line:adapt_sample_end}).
\end{enumerate}
At worst, AdaZORO makes $\cO(d)$ queries per iteration. Empirically, AdaZORO works very well on real-world datasets. For example, in the asset risk management experiment (see Section~\ref{subsec:asset risk}), $\bfg(x)\in\mathbb{R}^{225}$ but AdaZORO converges rapidly with approximate gradients having only $40$ to $72$ nonzero entries.

\section{Numerical experiments} \label{sec:numberical}

We compared ZORO to FDSA \citep{Kiefer1952}, SPSA \citep{spall1998overview} and the LASSO-based algorithm from \citep{Wang2018} (abbreviated as LASSO in the rest of this section). We did not test any global algorithms ({\em e.g.} REMBO \citep{wang2013bayesian}) due to their strong correlations with problem structures. 
For the synthetic experiments in Sections~\ref{sec:Synth_Datasets} and \ref{sec:numerical_speed_comparsion}, we only use vanilla ZORO ({\em i.e.} Algorithm~\ref{algo:zoro}), so we do not gain extra advantage with adaptive sampling.
We use AdaZORO ({\em i.e.} Algorithm~\ref{algo:Adazoro}) for the real-world asset risk management problem in Section~\ref{subsec:asset risk}. For sparse adversarial attack problem in Section~\ref{subsec:sparse attack}, we find the gradients are highly compressible and using vanilla ZORO is sufficient. A sample implementation of ZORO and AdaZORO can be found online at \url{https://github.com/caesarcai/ZORO}.

\subsection{Synthetic dataset I: Query efficiency} \label{sec:Synth_Datasets}
Consider $f(x) = x^{\top} A x/2$, where $A\in \mathbb{R}^{200\times 200}$ is a diagonal matrix. We tested: (a) exactly sparse case with 20 randomly generated diagonal positive numbers; (b) compressible case where diagonal elements are non-zeros that diminish exponentially---$A_{i,i} = e^{-\omega i}$ with $\omega=0.5$. For both cases, we used two versions of ZORO: ZORO(CoSaMP) without enforcing any constraints and ZORO(CoSaMP$+$prox) with a proximal operator to enforce non-negativity.

\begin{figure}
\centering
\hfill
\subfloat[Exact sparse case.]{
\includegraphics[scale = 0.4]{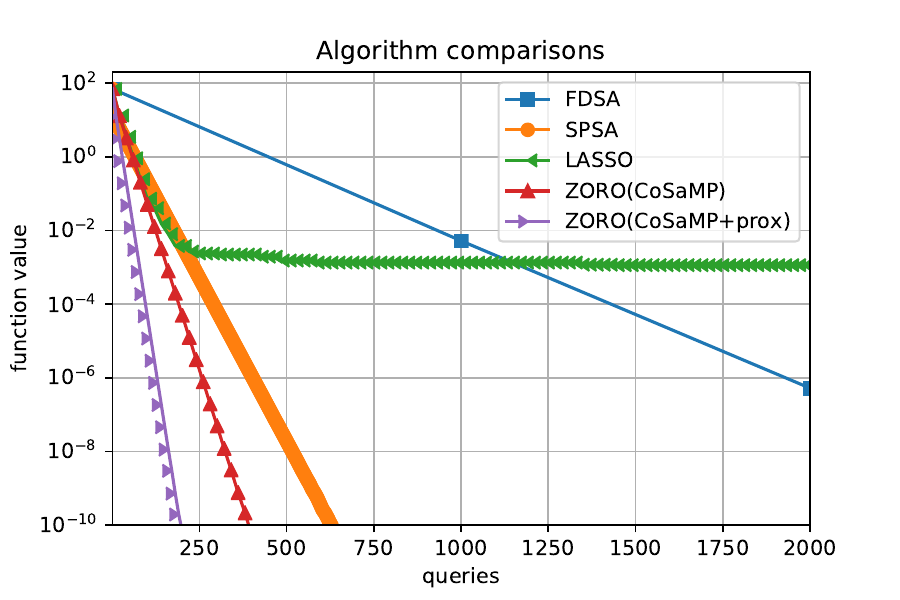} \label{fig_quad}}
\hfill
\subfloat[Compressible case.]{
\includegraphics[scale = 0.4]{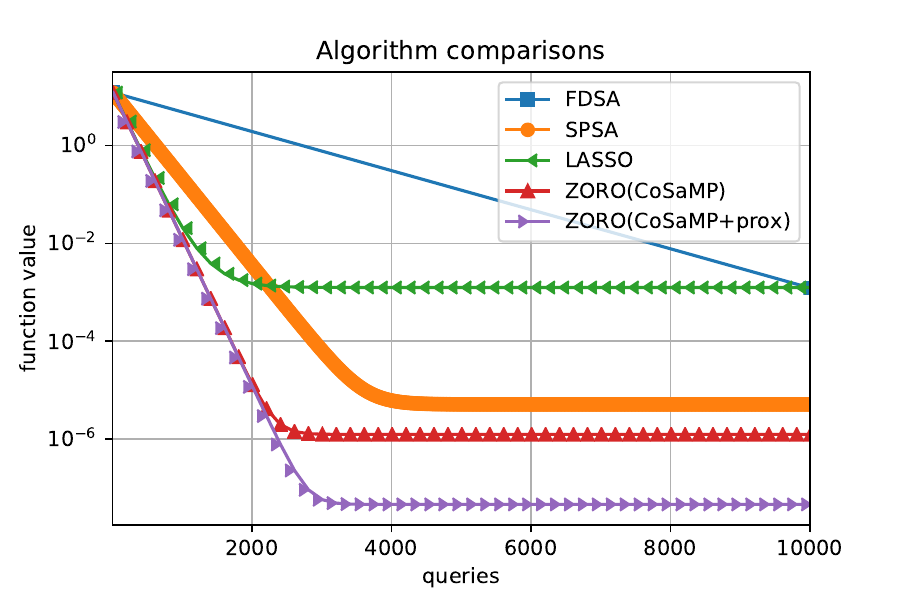} \label{fig_quad_dense}}
\hfill
\vspace{-0.05in}
\caption{Function values  \textit{v.s.} queries for gradient estimation methods in synthetic examples.  } 
\label{fig:Algo_Comparisons}
\end{figure}

The results of case (a) are shown in Figure~\ref{fig_quad}. ZORO(CoSaMP$+$prox) required $1/10$-th of the queries that FDSA required, and a third of the queries required by SPSA. Without the proximal operator, ZORO(CoSaMP) has less of an advantage, but is still noticeably cheaper than FDSA and SPSA, in terms of queries. LASSO consistently gets stuck around an accuracy of $10^{-3}$, and required more queries than both versions of ZORO before it converged.

The results of test (b) are summarized in Figure~\ref{fig_quad_dense}. Due to the ill-conditioned nature of the problem, none of the tested methods converged to arbitrarily small accuracy. However, ZORO(CoSaMP$+$prox) and ZORO(CoSaMP) achieved the best and second-best accuracy, respectively. They also exhibited the best query efficiency.

We conducted two additional experiments to further illustrate the advantages of using ZORO over SPSA\footnote{Note that our implementation of SPSA uses Rademacher random perturbation vectors $z$. Thus, it coincides with Random Search.}. We used the following two challenging objective functions:

\begin{figure}
\centering
\hfill
\subfloat[Max-$s$-squared-sum function.]{
\includegraphics[scale = 0.4]{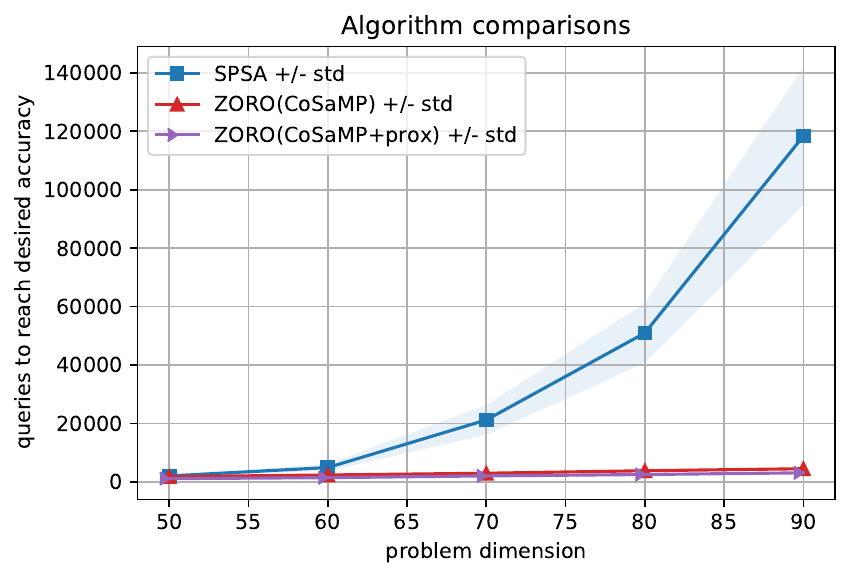} \label{fig_maxk}}
\hfill
\subfloat[Rotated quadratic function.]{
\includegraphics[scale = 0.4]{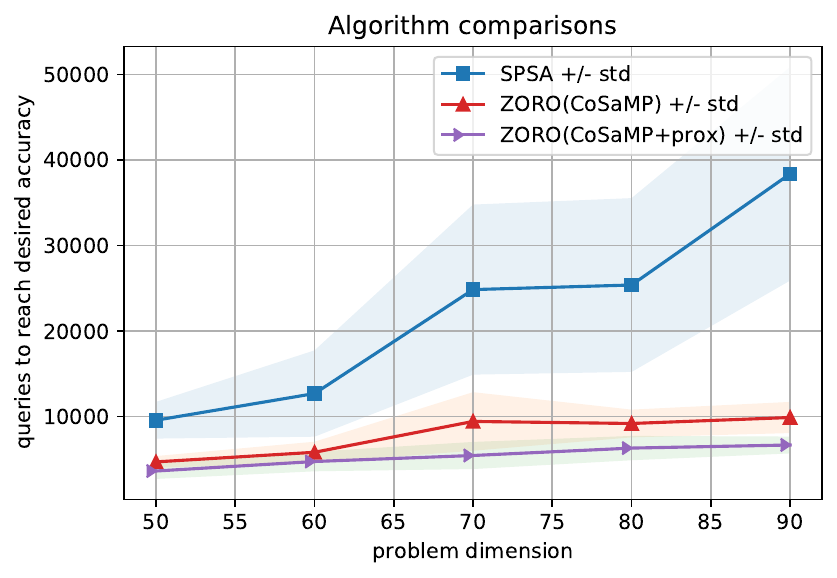} \label{fig_rotate_q}}
\hfill
\vspace{-0.05in}
\caption{Queries to reach desired accuracy  \textit{v.s.} problem dimension for gradient estimation methods in synthetic examples. } 
\label{fig:query_vs_dimension}
\end{figure}

\paragraph{Max-$s$-squared-sum function} $f(x)=\sum_{i=1}^{20} x_{m_i}^2$, where $x_{m_i}$ is the $i$-th largest-in-magnitude entry of $x$. This function has sparse gradients, and $\bfg(x)$ achieves {\em every possible support set} $S\subset [d]$.

\paragraph{Rotated sparse quadratic function} Pick an arbitrary sparse binary vector $x_\mathrm{true}$ with 10\% randomly located entries equal to 1. Consider the function $f(x)=(x-x_\mathrm{true})^{\top} QDQ^{\top} (x - x_\mathrm{true})$, where $Q$ is a random orthonormal matrix and $D$ is a diagonal matrix with uniform $[0,1]$ random entries. 
The gradients of $f(x)$ are not obviously compressible, but empirically most randomly sampled gradients are. The solution $x_\mathrm{true}$ is sparse, so we can incorporate this prior knowledge using a regularizer to accelerate convergence.

For both functions we repeated 10 experiments per dimension, using random initial points with unit $\ell_2$ norm. We ran each experiment until the objective error reached a threshold of $0.1\%$ of the initial objective error. Figure~\ref{fig:query_vs_dimension} depicts the means and standard deviations of the number of queries used for both functions. As is apparent, the query complexity of ZORO increases much more slowly than that of SPSA. In Figure~\ref{fig_maxk}, where $s=20$ is fixed while $d$ increases, it is clear that the query complexity of ZORO is only weakly dependent on $d$.

\subsection{Synthetic dataset II: Computational efficiency} \label{sec:numerical_speed_comparsion}

\begin{figure}[t]
\centering
\hfill
\subfloat[Runtime {\em v.s.} problem dimension.]{
\includegraphics[scale = 0.4]{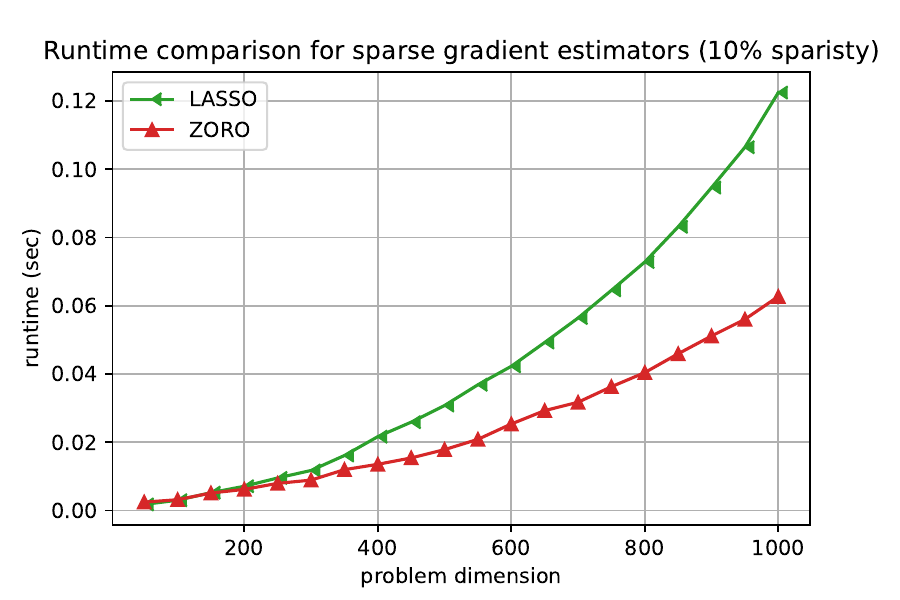} \label{fig_time1}}
\hfill
\subfloat[Runtime {\em v.s.} sparsity level.]{
\includegraphics[scale = 0.4]{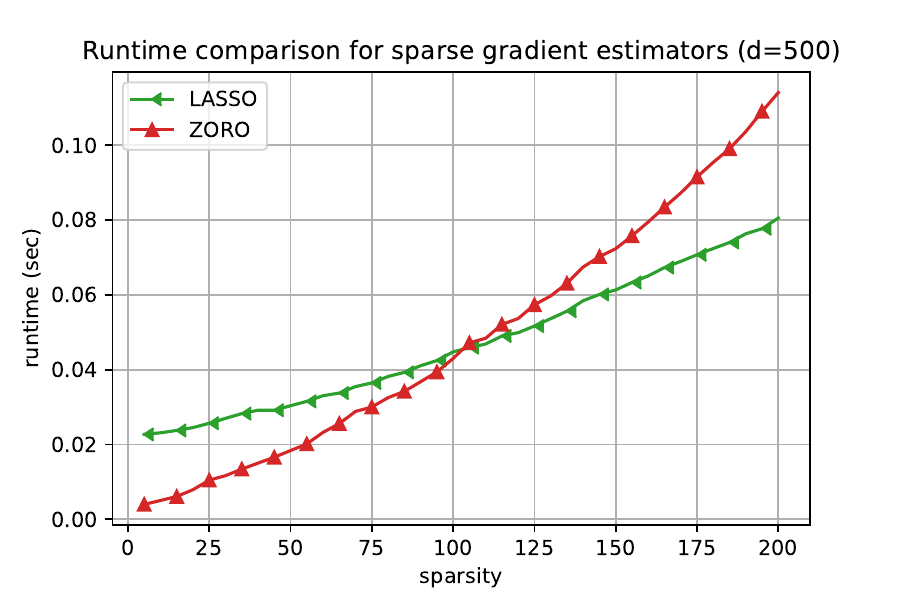} \label{fig_time2}}
\hfill
\vspace{-0.05in}
\caption{Runtime comparisons for the sparse gradient estimators in synthetic examples. } \label{fig_time_all}
\end{figure}

In this section, we investigate the computational efficiency of the sparse gradient estimators in LASSO and ZORO. The experiments were executed on a Windows 10 laptop with Intel i7-8750H CPU (6 cores at 2.2GHz) and 32GB of RAM.

We consider the quadratic function $f(x) = x^{\top} A x/2$, where $A\in \mathbb{R}^{d\times d}$ is a diagonal matrix with $s$ non-zero randomly generated positive elements. As shown in Figure~\ref{fig_quad}, LASSO has trouble estimating the gradients precisely when close to the optimal points, so we compared the speed of the sparse gradient estimators by averaging the results at $100$ randomly selected points. We use the same function queries for both gradient estimators. We emphasize that ZORO does not gain any additional advantage from a lower number of samples or better convergence in these speed experiments. The runtime per gradient estimation was evaluated with varying problem dimension (see Figure~\ref{fig_time1}) and sparsity level (see Figure~\ref{fig_time2}). 
We find the gradient estimator in ZORO is faster when the problem dimension is large and sparsity level is small while the gradient estimator in LASSO is faster when $d$ is small and $s$ is large. Since we are more interested in the high dimensional problems with sparse gradients, ZORO appears to have noticeable speed advantage in our problem setting.

\subsection{Asset risk management} \label{subsec:asset risk}

\begin{figure}[t]
\centering
\includegraphics[scale = 0.45]{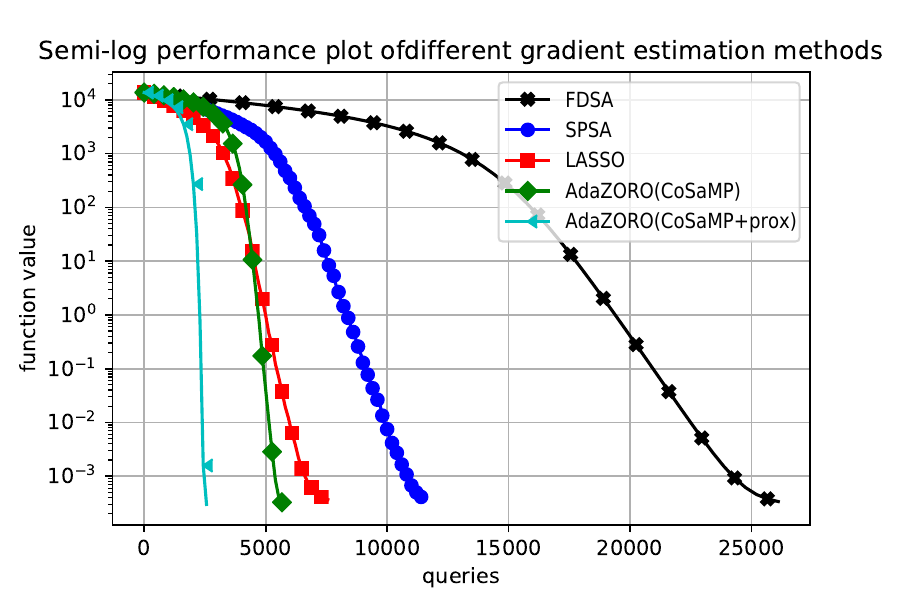} 
\vspace{-0.05in}
\caption{Function values \textit{v.s.} queries for gradient estimation methods in asset risk management.} \label{fig_asset}
\end{figure}

Consider a portfolio consisting of $d$ different assets, where $x_i$ and $m_i$ denote the fraction of the portfolio invested in, and the expected return of, asset $i$ respectively. 
$C$ denotes the covariance matrix of asset returns. The portfolio risk, which we aim to minimize, is $\frac{x^{\top}C x}{2(\sum_{i=1}^d x_i)^2}$. We used the correlation, mean, and standard deviation of 225 assets from the dataset of~\citep{CHANG20001271}. Our goal is to minimize the risk function subject to the expected return constraint 
$\frac{\sum_{i=1}^d m_i x_i}{\sum_{i=1}^d x_i}>r.$
We penalize the risk to formulate the problem as:
\begin{align*}
\minimize_{x\in\mathbb{R}^d} \frac{x^{\top}C x}{2(\sum_{i=1}^d x_i)^2} + \lambda \left(\min\left\{\frac{\sum_{i=1}^d m_i x_i}{\sum_{i=1}^d x_i}-r, 0\right\}\right)^2.
\end{align*}
Optionally, a non-negative constraint ($x_i\geq 0$ for all $i$) can be added to this problem, and it can be imposed by a proximal operator. 
In this experiment, instead of solving this quadratic program directly, consider a problem proposer who wishes to keep the formulation and data private and only offers noisy zeroth-order oracle access. 

As the appropriate $s$ is not clear {\em a priori}, we use the adaptive sampling strategy ({\em i.e.} AdaZORO) for this problem. The results are shown in Figure~\ref{fig_asset}. When $x_i$ was unconstrained, the query efficiency of AdaZORO(CoSaMP) was twice as good as SPSA and five times as good as FDSA; moreover, it saves around $20\%$ queries as compared to LASSO. When imposing the non-negativity constraint via a proximal operator, AdaZORO(CoSaMP$+$prox) further improved the query efficiency to be twice as good as LASSO. 
All solutions in this test matched the optimal value found via quadratic programming (using full knowledge of $C$), which is approximately $2\times 10^{-4}$.

\subsection{Sparse adversarial attack on ImageNet} \label{subsec:sparse attack}

\begin{figure}[t]
\centering
\subfloat[``corn'' $\rightarrow$ ``ear, spike, capitulum'']{\includegraphics[width=.242\linewidth]{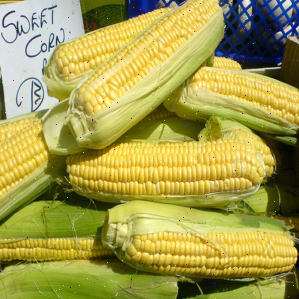}} \hfill
\subfloat[``plastic bag'' $\rightarrow$ ``shower cap'']{\includegraphics[width=.242\linewidth]{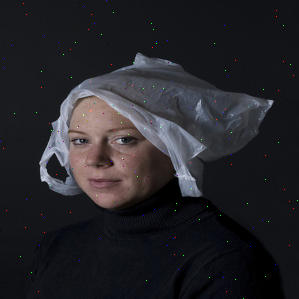}}  \hfill
\subfloat[``water ouzel, dipper'' $\rightarrow$  ``otter'']{\includegraphics[width=.242\linewidth]{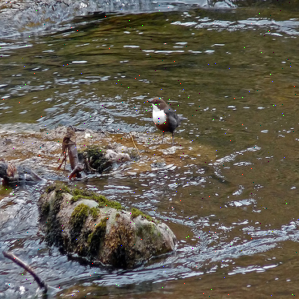}} \hfill
\subfloat[``thimble'' $\rightarrow$ ``measuring cup'']{\includegraphics[width=.242\linewidth]{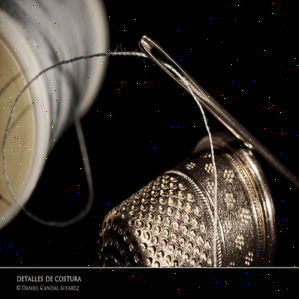}}
\vspace{-0.05in}
\caption{Examples of adversarial images, true labels and mis-classified labels.} \label{fig:sparse_attack_example}
\end{figure}

We tested generating black-box adversarial examples using ZORO. We used Inception-V3 model \citep{szegedy2016rethinking} on ImageNet \citep{deng2009imagenet} and focused on per-image adversarial attacks. The authors in \citep{chen2019zo} considered a similar problem by optimizing the attack loss and the $\ell_2$ norm of image distortion. In contrast, we aimed to find distortions $\delta$ for single images $x$ such that the attack loss $f(x+\delta)$ and the $\ell_0$ norm of distortion are minimized:
$\minimize_{\delta}  f(x+\delta) + \lambda\|\delta\|_0.$ Similar to \citep{chen2019zo}, we use ZORO to attack 100 random images from ImageNet that are correctly classified by Inception-V3. We compared ZORO with ZO-AdaMM, ZO-SGD, and ZO-SCD \citep{chen2019zo}. ZO-SCD is essentially a variation of FDSA, and ZO-SGD is a mini-batched version of SPSA. All of these methods led to extremely large $\ell_0$ distortions, except ZO-SCD, which had the worse $\ell_2$ distortion. Hence, successful attacks required distorting nearly all pixels. We used the same setup in \citep{chen2019zo}: $10$ queries at each iteration and check if the attack succeeds before $1000$ iterations. As the problem dimension is very large ($d = 65,536$) we use a block coordinate descent version of ZORO. In each iteration we randomly selected a subspace of 2000 dimensions (pixels) and generated random perturbations only in this subspace. We took $s=10$ and $m=50$, and performed $200$ iterations. Although prior sparse adversarial attacks exist ({\em e.g.} SparseFool~\citep{modas2019sparsefool}), we appear to be the first to connect adversarial attacks to sparse zeroth-order optimization. Table~\ref{sparse-attack} presents the experimental results. ZORO had the highest attack success rate while having the lowest average $\ell_0$ distortion. Surprisingly, the average $\ell_2$ distortion of ZORO was also the best. The average query complexity of ZORO is slightly worse than the other methods tested, as it uses more queries at each iteration. Some pictures of successful sparse attacks by ZORO are presented in Figure~\ref{fig:sparse_attack_example}.

\begin{table}[t]
\caption{Attack success rate (ASR), average final $\ell_0$ distortion (as a percentage of the total number of pixels), average final $\ell_2$ distortion, and average iterations of first successful attack for different zeroth-order attack methods. The image pixels are normalized to $[-0.5, 0.5]$ to calculate $\ell_2$ distortion.} \label{sparse-attack}
\begin{center}
\begin{tabular}{lcccc}
\toprule
\textsc{Methods} & \textsc{ASR} & \textsc{$\ell_0$ dist} & \textsc{$\ell_2$ dist} & \textsc{Iter} \\
\midrule
ZO-SCD    & $78 \%$ &  $0.89 \%$ & $57.5$  & $240$\\
ZO-SGD  & $78 \%$ &  $100 \%$ &  $37.9$ & $159$\\
ZO-AdaMM  & $81 \%$&  $100 \%$ &  $28.2$ & $172$\\
ZORO      & \textbf{90$\%$} & \textbf{0.73$\%$} &  \textbf{21.1} &  \textbf{59}     \\
\bottomrule
\end{tabular}
\end{center}
\end{table}

\begin{table}[t]
\caption{Recovery success rate (RSR), original image distortion rate, and total prediction accuracy reduction for different median filter sizes.} \label{median-filter}
\begin{center}
\begin{tabular}{cccc}
\toprule
\textsc{Median filter} & \textsc{RSR} & \textsc{Dist rate} & \textsc{TOT reduction}  \\
\midrule
size = 2  & $86 \%$ &  $8 \%$ &  $ 21 \%$\\
size = 3  & \textbf{92$\%$} &  \textbf{7$\%$} &  \textbf{14$\%$}\\
size = 4  & $76 \%$ &  $14 \%$ &  $34 \%$ \\
size = 5  & $69 \%$ &  $29 \%$ &  $53 \%$ \\
\bottomrule
\end{tabular}
\end{center}
\end{table}

Out of curiosity, we tested mitigating our sparse attacks by applying a median filter, a common method to remove speckle noise. 
We used Inception-V3 on the attacked-then-filtered imaged to check whether true labels are obtained, {\em i.e.} whether or not the attack has been mitigated. We also applied the same filter to the original (un-attacked) images to check if they reduced label accuracy. 
Let $\mathcal{A}$ denote the set of adversarial images that are successfully attacked by ZORO, $\mathcal{I}_1$ denote the set of image IDs that are not recovered, and $\mathcal{I}_2$ denote the set of image IDs that are mis-classified. The recovery success rate (RSR), $1- |\mathcal{I}_1|/|\mathcal{A}|$, is the ratio of images in $\mathcal{A}$ been identified to the true label after filtering. 
The distortion rate, $|\mathcal{I}_2|/|\mathcal{A}|$, is the ratio of images been assigned an incorrect label. Note that there are some overlapping IDs in $\mathcal{I}_1$ and $\mathcal{I}_2$. The total accuracy reduction, $|\mathcal{I}_1 \cup \mathcal{I}_2|/|\mathcal{A}|$, summarizes these two experiments. 
The test results are presented in Table~\ref{median-filter}. 
While mitigating many attacks, the median filter also distorted the original images, causing lower classification accuracies.

\bibliographystyle{siamplain}
\bibliography{ZerothOrderBib}

\appendix


\section{Functions with sparse gradients}
\label{sec:SparseGradients}
Let $Y\sim N(0,I_{d})$ be a zero-mean Gaussian random vector with covariance matrix the identity. Define $f_{\nu}(x) = \mathbb{E}[f(x+\nu Y)]$, the {\em Gaussian smoothing} of $f(x)$. In \citep{Balasubramanian2018} it is claimed, below Assumption~4 on pg.~7,  that if $\|\nabla f(x)\|_{0} \leq s$ then $\|\nabla f_{\nu}(x)\|_{0} \leq s$. This is false in general, as Theorem~\ref{thm:OtherPaperIsWrong} shows. Because this is not true, the following key line (see pg.~4 of the supplementary) in the proof of Lemma~C.2 of \citep{Balasubramanian2018}:
\begin{equation*}
\|\nabla f_{\nu}(x) - \nabla f(x)\|_{2} \leq \sqrt{s}\|\nabla f_{\nu}(x) - \nabla f(x)\|_{\infty} 
\end{equation*}
is not correct, and thus Lemma~C.2 is false. Because Lemma~C.2 is crucial for the proofs of Theorems~3.1 and 3.2 in \citep{Balasubramanian2018}, these Theorems are also false. As far as we can tell, the only way to fix these theorems is to replace the assumption: $\|\nabla f(x)\|_{0} \leq s$ for all $x\in\mathbb{R}^{d}$ with the more restrictive fixed support assumption: $\supp(\nabla f(x)) = S$ for all $x\in\mathbb{R}^{d}$ for some fixed $S\subset\{1,\ldots, d\}$.

\begin{theorem}
\label{thm:OtherPaperIsWrong}
Suppose $f$ is continuously differentiable and $\|\nabla f(x)\|_{0} \leq s$ for all $x\in\mathbb{R}^{d}$. If there exist $x_1,x_2\in\mathbb{R}^{d}$ with $S_1:= \supp(\nabla f(x_1)) \neq \supp(\nabla f(x_2)) =: S_2$, then $\|\nabla f_{\nu}(x)\|_{0} > s$ for all $x\in\mathbb{R}^{d}$. 
\end{theorem}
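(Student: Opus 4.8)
The plan is to exploit two structural features of Gaussian smoothing. Writing $\varphi_{\nu}$ for the density of $\nu Y$, one has $f_{\nu} = f * \varphi_{\nu}$, so each partial derivative of the smoothed function is again a Gaussian convolution, $\partial_{j} f_{\nu} = (\partial_{j} f) * \varphi_{\nu}$; and the convolution of any continuous function of sub-Gaussian growth with $\varphi_{\nu}$ extends holomorphically to a tube around $\mathbb{R}^{d}$, hence is real-analytic. Granting these, I would first decide which components of $\nabla f_{\nu}$ can vanish identically, and then use analyticity to control how often the remaining ones vanish.

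First I would pin down which components are identically zero. For a fixed index $j$, $\partial_{j} f \equiv 0$ trivially implies $\partial_{j} f_{\nu} \equiv 0$. For the converse, if $g := \partial_{j} f \not\equiv 0$ then $g * \varphi_{\nu} \not\equiv 0$: passing to Fourier transforms, $\widehat{g * \varphi_{\nu}} = \widehat{g}\,\widehat{\varphi_{\nu}}$ with $\widehat{\varphi_{\nu}}$ a nowhere-vanishing Gaussian, so $g * \varphi_{\nu} \equiv 0$ would force $g \equiv 0$ (when $f$ grows faster than polynomially one substitutes the analyticity of the heat semigroup in its time parameter; this is a routine technicality). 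Hence, with $J := \bigcup_{x} \supp \nabla f(x)$, the component $\partial_{j} f_{\nu}$ is identically zero exactly for $j \notin J$, and for $j \in J$ it is a nonzero real-analytic function on $\mathbb{R}^{d}$.

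Next I would count. Both $S_{1}$ and $S_{2}$ lie in $J$, and --- taking $s$ to be the sharp sparsity level, so that $|S_{1}| = |S_{2}| = s$ --- the hypothesis $S_{1} \neq S_{2}$ forces an index of $S_{1}$ outside $S_{2}$, whence $|J| \geq |S_{1} \cup S_{2}| \geq s + 1$. For each $j \in J$ the zero set $Z_{j} := \{x : \partial_{j} f_{\nu}(x) = 0\}$ is closed with empty interior, since a nonzero real-analytic function on the connected set $\mathbb{R}^{d}$ cannot vanish on an open set. The finite union $\bigcup_{j \in J} Z_{j}$ is then again closed with empty interior, so on its dense open complement every coordinate $\partial_{j} f_{\nu}$ with $j \in J$ is nonzero and $\|\nabla f_{\nu}(x)\|_{0} = |J| \geq s + 1$. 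This already shows $\|\nabla f_{\nu}(x)\|_{0} \leq s$ cannot hold for all $x$, which is exactly what is needed to invalidate the claim in \citep{Balasubramanian2018}.

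The hardest step will be upgrading ``for $x$ outside a nowhere-dense set'' to the literally stated ``for all $x$'': at an exceptional point some $\partial_{j} f_{\nu}$ with $j \in J$ may vanish, dropping $\|\nabla f_{\nu}(x)\|_{0}$ below $|J|$. I would recover the statement at every point under the additional mild structure that each $\partial_{j} f$ is of one sign on its support, in which case $(\partial_{j} f) * \varphi_{\nu}$ is strictly positive (or strictly negative) everywhere; this is the regime in which the sharpest counterexamples to \citep{Balasubramanian2018} arise. Absent such structure, the generic-$x$ statement above is the honest conclusion, and it already suffices to break Lemma~C.2 there.
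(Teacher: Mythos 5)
Your argument follows the same route as the paper's own proof: write $f_{\nu}=f*g_{\nu}$, commute the partial derivative with the convolution, use analyticity of the Gaussian kernel to conclude each $\nabla_{i}f_{\nu}$ is real-analytic, use the nowhere-vanishing of the Fourier transform of $g_{\nu}$ to show $\nabla_{i}f_{\nu}\equiv 0$ if and only if $\nabla_{i}f\equiv 0$, and then count: $S_{1}\cup S_{2}\subseteq J:=\bigcup_{x}\supp(\nabla f(x))$ has at least $s+1$ elements. Up to the implicit normalization (shared with the paper) that $s$ is the sharp sparsity level, so that $|S_{1}\cup S_{2}|\geq s+1$, this part of your proposal is correct and matches the paper essentially step for step.

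The one point of divergence is exactly the one you flag. You conclude $\|\nabla f_{\nu}(x)\|_{0}\geq s+1$ only for $x$ outside the closed, nowhere-dense set $\bigcup_{j\in J}Z_{j}$, whereas the theorem asserts it for \emph{all} $x$. The paper bridges this by asserting that $\supp(\nabla f_{\nu}(\cdot))$ is constant on some open neighborhood $U$ of an arbitrary $x$, so that $i\notin S_{x}$ forces $\nabla_{i}f_{\nu}$ to vanish on all of $U$ and hence, by analyticity, everywhere. But that local-constancy claim is precisely what fails at your exceptional points: the support of a continuous (even analytic) vector field is only lower semi-continuous, and a not-identically-zero analytic component can vanish at $x$ while being nonzero arbitrarily nearby (e.g.\ $h(x)=x_{1}$ at the origin). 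So your caution is warranted: the ``for all $x$'' form does not follow from either argument as written without an extra ingredient, such as your one-signedness condition on each $\partial_{j}f$, under which $(\partial_{j}f)*g_{\nu}$ is strictly signed and the exceptional set is empty. As you observe, the dense-open-set version you do establish already suffices for the theorem's purpose of refuting Lemma~C.2 of \citep{Balasubramanian2018}, since it shows $\|\nabla f_{\nu}(x)\|_{0}\leq s$ cannot hold on any open set.
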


Before proving this theorem, it is useful to introduce some notation and provide some preliminary lemmas. Let $g_{\nu}(x)$ denotes the Gaussian kernel:
\begin{equation}
g_{\nu}(x) := \frac{1}{(2\pi)^{d/2}\nu^{d}}\exp\left( -\frac{\|z\|_{2}^{2}}{2\nu^{2d}} \right).
\label{eq:Gaussian_Kernel}
\end{equation}
Observe that $f_{\nu}(x)$ is the convolution of $f(x)$ and $g_{\nu}(x)$:
\begin{align*}
   f_{\nu}(x) &= \mathbb{E}[f(x+\nu Y)] \\
   & = \frac{1}{(2\pi)^{d/2}} \int_{\mathbb{R}^{d}}f(x+\nu y)\exp(-\|y\|_2^2/2)dy \quad\quad \text{ \em change variables: } z = -\nu y\\
   & = \frac{1}{(2\pi)^{d/2}} \frac{1}{\nu^{d}}\int_{\mathbb{R}^{d}} f(x - z)\exp\left( -\frac{\|z\|_{2}^{2}}{2\nu^{2d}} \right)dz = f(x)*g_{\nu}(x) .
\end{align*}

\begin{lemma}
\label{lemma:ZerosAnalytic}
Suppose $f$ is continuously differentiable and let $\nabla_{i}f_{\nu}(x) := \frac{\partial}{\partial x_i}f_{\nu}(x)$. If there exists an open set $U\subset\mathbb{R}^{d}$ such that $\nabla_{i}f_{\nu}(x) = 0$ for all $x\in U$ then $\nabla_{i}f_{\nu}(x) = 0$ for all $x\in\mathbb{R}^{d}$.
\end{lemma}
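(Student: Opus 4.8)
The plan is to exploit the fact that $f_\nu$ is a Gaussian convolution, hence real-analytic, and then to invoke the identity theorem for real-analytic functions on the connected open set $\mathbb{R}^d$. Concretely: from the computation preceding the lemma we have $f_\nu = f * g_\nu$, where $g_\nu$ is the Gaussian kernel \eqref{eq:Gaussian_Kernel}. Since $g_\nu$ extends to an \emph{entire} function and decays faster than any polynomial, convolving with it regularizes $f$ all the way to real-analyticity; once we know $f_\nu$ is real-analytic, so is each partial derivative $\nabla_i f_\nu = \partial f_\nu/\partial x_i$, and a real-analytic function on a connected domain that vanishes on a nonempty open set vanishes identically. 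Applying this with the open set $U$ and the domain $\mathbb{R}^d$ (which is connected) gives $\nabla_i f_\nu \equiv 0$ on all of $\mathbb{R}^d$, as desired.

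The only real work is establishing real-analyticity of $f_\nu$, and the cleanest route is to complexify. First I would fix a growth bound on $f$ so that all the integrals below make sense: in the setting of this paper $f$ has at most quadratic growth (e.g.\ $|f(z)| \leq |f(0)| + \|\nabla f(0)\|_2\|z\|_2 + \tfrac{L}{2}\|z\|_2^2$ under Assumption~\ref{assumption:Lipschitz_Diff}; more generally any subquadratic growth suffices). Then for $x \in \mathbb{C}^d$ one extends $z \mapsto g_\nu(x-z)$ holomorphically in $x$, and, using $\operatorname{Re}\sum_j (x_j - z_j)^2 = \|\operatorname{Re}(x) - z\|_2^2 - \|\operatorname{Im}(x)\|_2^2$, one checks that $|g_\nu(x - z)|$ is dominated by $C(x)\,e^{-\|\operatorname{Re}(x) - z\|_2^2/(2\nu^2)}$ with $C(x)$ bounded on compact subsets of $\mathbb{C}^d$. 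Combined with the growth bound on $f$, the integral $\int_{\mathbb{R}^d} f(z)\,g_\nu(x-z)\,dz$ converges absolutely and locally uniformly for $x \in \mathbb{C}^d$, so by Morera's theorem applied in each complex coordinate it defines a holomorphic function on $\mathbb{C}^d$ whose restriction to $\mathbb{R}^d$ equals $f_\nu$. Hence $f_\nu$ is real-analytic.

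The main obstacle, then, is the locally-uniform-convergence bookkeeping for the complex extension (the estimate on $|g_\nu(x-z)|$ for complex $x$ together with the growth control on $f$); everything else is soft. As a shortcut that avoids complex analysis entirely, one could instead observe that $u(x,t) := f_{\sqrt{t}}(x)$ solves the heat equation $\partial_t u = \tfrac{1}{2}\Delta u$ for $t > 0$, and that solutions of the heat equation are real-analytic in the spatial variables at each positive time; taking $t = \nu^2$ then yields real-analyticity of $f_\nu$, after which the identity-theorem argument is identical. Either way, the conclusion feeds directly into the proof of Theorem~\ref{thm:OtherPaperIsWrong}.
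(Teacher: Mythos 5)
Your proposal is correct and follows essentially the same route as the paper: establish that the Gaussian smoothing is real-analytic and then invoke the identity theorem for real-analytic functions on the connected domain $\mathbb{R}^d$. The only difference is one of rigor --- the paper simply asserts that ``convolution preserves analyticity,'' whereas you correctly identify that this step requires justification (a growth bound on $f$ plus locally uniform convergence of the complexified convolution integral, or alternatively the heat-equation argument) and supply it.
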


\begin{proof}
First, observe that if $f(x)$ is continuously differentiable then:
\begin{equation}
\nabla_{i}f_{\nu}(x) = \frac{\partial}{\partial x_i}\left(f*g_{\nu}\right) \stackrel{(a)}{=} \left[\frac{\partial}{\partial x_i} f\right]*g_{\nu} = \left[\nabla_{i}f\right] *g_{\nu},
\label{eq:Conv_Diff}
\end{equation}
where $(a)$ is a well-known property of convolutions. Because $g_{\nu}(x)$ is an analytic function, $\nabla_{i}f_{\nu}(x)$ is also analytic as convolution preserves analyticity. But then it follows from basic properties of analytic functions that if $\nabla_{i}f_{\nu}(x)$ is zero on an open set it is zero everywhere.
\end{proof}

We now prove the theorem by using some fundamental results in Fourier analysis:
\begin{proof}
Let $\mathcal{F}$ denote the Fourier transform. Suppose that $f,g: \mathbb{R}^{d}\to\mathbb{R}$ are continuously differentiable. We shall use the following well-known facts about  $\mathcal{F}$:
\begin{enumerate}
    \item $\mathcal{F}\left(f*g\right) = \mathcal{F}(f)\mathcal{F}(g)$. 
    
    \item If $\mathcal{F}(g) = 0$ then $g = 0$. 
    
    \item Let $g_{\nu}$ be the Gaussian kernel \eqref{eq:Gaussian_Kernel}. Then $\displaystyle \mathcal{F}(g_{\nu}) = \frac{(2\pi)}{\nu^{d}}g_{1/\nu}(x)$.
\end{enumerate}
Pick any $x\in\mathbb{R}^{d}$ and let $S_{x} := \supp\left(\nabla f_{\nu}(x)\right)$ We claim that $S_1\cup S_2 \subset S_{x}$. First, observe $f_{\nu}$ is continuously differentiable (in fact, analytic) because $g_{\nu}$ is analytic. Thus, there exists an open set $U$ containing $x$ upon which the support of $\nabla f_{\nu}$ is constant: $\supp(\nabla f_{\nu}(y)) = S_{x}$ for all $y \in U$. Pick any $i\in S_1$. If $i\notin S_{x}$ then $\nabla_{i}f_{\nu}(y) = 0$ for all $y\in U$. Then, by Lemma~\ref{lemma:ZerosAnalytic}, $\nabla_{i}f_{\nu}(y) = 0$ for all $y\in \mathbb{R}^{d}$. Now apply the Fourier transform:
\begin{align*}
    0 = \mathcal{F}(0) = \mathcal{F}\left(\nabla_{i}f_{\nu}\right) \stackrel{(a)}{=} \mathcal{F}\left(\left[\nabla_{i}f\right] *g_{\nu}\right) \stackrel{(b)}{=} \mathcal{F}\left(\nabla_{i}f\right)\mathcal{F}\left(g_{\nu}\right) \stackrel{(c)}{=} \mathcal{F}\left(\nabla_{i}f\right) \left(\frac{(2\pi)}{\nu^{d}}g_{1/\nu}\right),
\end{align*}
where (a) follows from \eqref{eq:Conv_Diff}, (b) follows from Fact~1, and (c) follows from Fact~3. In other words:
\begin{equation*}
\mathcal{F}\left(\nabla_{i}f\right)(y)g_{1/\nu}(y) = 0 \quad \text{ for all } y \in \mathbb{R}^{d}.
\end{equation*}
But, $g_{1/\nu}(y)\neq 0$ for all $y\in \mathbb{R}^{d}$, hence $\mathcal{F}\left(\nabla_{i}f\right)(y) = 0$ for all $y\in\mathbb{R}^{d}$. That is, $\mathcal{F}\left(\nabla_{i}f\right)$ is the zero function. Appealing to Fact 2 above, $\nabla_{i}f$ is also the zero function. That is, $\nabla_{i}f(y) = 0$ for all $y\in\mathbb{R}^{d}$. This is a contradiction as $i\in S_1$ so by definition $\nabla_{i}f(x_1) \neq 0$. Hence we must have $i\in S_x$, and thus $S_1\subset S_x$. The same argument implies that $S_2\subset S_x$. Because $|S_1| = |S_2| = s$ but $S_1\neq S_2 $ we have that $|S_x| > s$, thus proving the theorem.
\end{proof}

\begin{theorem}
\label{thm:Sparse_Grad_Strong_Convex}
Suppose $f$ is continuously differentiable and $\|\nabla f(x)\|_{0} \leq s$ for all $x\in\mathbb{R}^{d}$. Then $f$ cannot be strongly convex.
\end{theorem}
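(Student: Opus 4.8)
The plan is to argue by contradiction. Suppose $f$ is $\mu$-strongly convex for some $\mu>0$ and that $\|\nabla f(x)\|_0\le s$ for all $x$; note that necessarily $s<d$, since for $s\ge d$ the hypothesis is vacuous and the conclusion is false (e.g.\ $f(x)=\tfrac{\mu}{2}\|x\|_2^2$). Because $f$ is $C^1$ and strongly convex it obeys the strong monotonicity inequality $\langle\nabla f(u)-\nabla f(v),\,u-v\rangle\ge\mu\|u-v\|_2^2$ for all $u,v\in\mathbb{R}^d$. Taking $u=x+t\bfe_i$ and $v=x+t'\bfe_i$ with $t>t'$ gives $\nabla_i f(x+t\bfe_i)-\nabla_i f(x+t'\bfe_i)\ge\mu(t-t')>0$: along every line parallel to a coordinate axis $\bfe_i$, the partial derivative $\nabla_i f$ is \emph{strictly increasing}, and therefore vanishes at \emph{at most one} point of that line.

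The first key step is to convert this into a statement about the zero sets $Z_i:=\{x\in\mathbb{R}^d:\nabla_i f(x)=0\}$. Write a point of $\mathbb{R}^d$ as $(x',t)$, with $x'\in\mathbb{R}^{d-1}$ collecting the coordinates other than the $i$-th and $t$ the $i$-th coordinate. The previous paragraph shows that each slice $\{t\in\mathbb{R}:(x',t)\in Z_i\}$ is empty or a single point, hence Lebesgue-null in $\mathbb{R}$. Since $\nabla_i f$ is continuous, $Z_i$ is closed, hence measurable, and Fubini's theorem yields $\lambda_d(Z_i)=\int_{\mathbb{R}^{d-1}}\lambda_1\big(\{t:(x',t)\in Z_i\}\big)\,d\lambda_{d-1}(x')=0$. (Alternatively: $Z_i$ is closed and meets every line parallel to $\bfe_i$ in at most one point, so it has empty interior, and one may finish with the Baire category theorem in place of Fubini.)

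The second key step is a covering argument. The hypothesis $\|\nabla f(x)\|_0\le s<d$ forces, at every $x$, at least $d-s\ge1$ of the partials $\nabla_i f(x)$ to vanish, so $x\in\bigcup_{i=1}^d Z_i$; hence $\mathbb{R}^d=\bigcup_{i=1}^d Z_i$, a finite union of Lebesgue-null sets, which is impossible. This contradiction shows $f$ cannot be strongly convex. (An alternative could be built on the Gaussian-smoothing circle of ideas behind Theorem~\ref{thm:OtherPaperIsWrong}, since $f_\nu$ inherits $\mu$-strong convexity while one argues that its gradient cannot be $s$-sparse; but the direct argument above is shorter and self-contained.) The only mildly delicate point is the measure-zero claim for $Z_i$ in the first key step — and even that is routine once the coordinate-wise monotonicity of $\nabla f$ under strong convexity is noted; everything else is immediate.
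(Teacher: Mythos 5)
Your proof is correct, and it takes a genuinely different route from the paper's. The paper argues \emph{locally}: fix $x$, claim that $\supp(\nabla f)$ is constant on a neighborhood $U$ of $x$, pick $i\notin\supp(\nabla f(x))$ (possible since $s<d$), integrate $\nabla_i f=0$ along a short segment from $x$ to $x+\varepsilon\bfe_i$ inside $U$ to conclude $f(x+\varepsilon\bfe_i)-f(x)=0=\nabla f(x)^{\top}(\varepsilon\bfe_i)$, and then contradict the strong convexity inequality at that pair of points. You argue \emph{globally}: strong monotonicity of $\nabla f$ makes each partial $\nabla_i f$ strictly increasing along lines parallel to $\bfe_i$, so each closed zero set $Z_i$ meets such lines in at most one point and is therefore Lebesgue-null (or nowhere dense), while $s<d$ forces every point to lie in some $Z_i$, so $\mathbb{R}^d=\bigcup_{i=1}^d Z_i$ would be a finite union of null sets --- impossible. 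Your version buys some robustness: the paper's assertion that the support of $\nabla f$ is locally constant is not literally true for a general $C^1$ function (the support is only lower semicontinuous --- components can switch on, though not off, arbitrarily close to $x$), so the paper's argument as written needs the base point to be chosen with locally maximal support; your argument never tracks the support set and only uses that at every point at least one partial vanishes. The price is a small dose of measure theory (Tonelli on the measurable sets $Z_i$, or Baire category) in place of the paper's purely pointwise calculus. Your remark that the statement is vacuous (indeed false) for $s\ge d$ is a correct reading of the implicit hypothesis, which the paper's proof also uses tacitly when it selects $i\notin S$.
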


\begin{proof}
Recall if $f$ is strongly convex then there exists a $\mu > 0$ such that for all $x,y\in\mathbb{R}^{d}$:
\begin{equation}
f(y) - f(x) \geq \nabla f(x)^{\top}(y-x) + \frac{\mu}{2}\|y-x\|_2^2.
\label{eq:Res_Convex_Property}
\end{equation}
Pick any $x\in\mathbb{R}^{d}$ and let $S := \supp(\nabla f(x))$. As in the proof of Theorem~\ref{thm:OtherPaperIsWrong}, because $f$ is continuously differentiable there exists an open set $U$ such that $\supp(\nabla f(y)) = S$ for all $y\in U$. Shrinking $U$ further if necessary we may assume $U$ is convex. Pick any $i \notin S$ and let $\bfe_i$ denote the $i$-th canonical basis vector; then clearly $\nabla f(z)^{\top}\bfe_i = 0$ for all $z\in U$. Choose $y = x + \varepsilon\bfe_i$, where $\varepsilon$ is small enough such that $y\in U$. By the fundamental theorem of calculus:
\begin{equation*}
f(y) - f(x) = \int_{0}^{\varepsilon}t\nabla f(x+t\bfe_i)^{\top}\bfe_i dt \stackrel{(a)}{=} \int_{0}^{\varepsilon}t(0)dt = 0,
\end{equation*}
where (a) holds because $U$ is convex and thus $x+t\bfe_i\in U$ for all $0\leq t \leq \varepsilon$. Moreover, $\nabla f(x)^{\top}(y-x) = 0$. Returning to \eqref{eq:Res_Convex_Property}, if $f$ were strongly convex we would have $0 \geq 0 + \frac{\mu}{2}\|y-x\|_2^2$ for some $\mu > 0$, a clear contradiction.
\end{proof}

\end{document}